\documentclass[12pt]{article}
\usepackage[latin1]{inputenc}
\usepackage{amsmath,amsthm,amssymb}
\usepackage{amsfonts}
\usepackage{amsmath,amsthm,amssymb,amscd}
\usepackage{latexsym}
\usepackage{color}
\usepackage{color,enumitem,graphicx}
\usepackage[colorlinks=true,urlcolor=blue,
citecolor=red,linkcolor=blue,linktocpage,pdfpagelabels,
bookmarksnumbered,bookmarksopen]{hyperref}
\usepackage{graphicx}
\usepackage{mathrsfs}
\usepackage{bm}
\usepackage{cite}

\textwidth172mm \textheight22cm \hoffset-24mm \voffset-20mm

\makeatletter \@addtoreset{equation}{section} \makeatother

\setlength{\parindent}{1em}

\newtheorem{theorem}{Theorem}[section]

\newtheorem{proposition}{Proposition}[section]
\newtheorem{lemma}{Lemma}[section]
\newtheorem{remark}{Remark}[section]

\numberwithin{equation}{section}

\allowdisplaybreaks

\begin{document}

\title{Normalized solutions for a fractional Choquard-type equation with exponential critical growth in $\mathbb{R}$}
\author
{Wenjing Chen\footnote{Corresponding author.}\ \footnote{E-mail address:\, {\tt wjchen@swu.edu.cn} (W. Chen),   {\tt sunqianva@163.com} (Q. Sun),  {\tt zxwangmath@163.com} (Z. Wang)}\ ,  Qian Sun  and   Zexi Wang\\
\footnotesize  School of Mathematics and Statistics, Southwest University,
Chongqing, 400715, P.R. China}
\date{ }
\maketitle

\begin{abstract}
{In this paper, we study the following fractional Choquard-type equation with prescribed mass
 \begin{align*}
   \begin{cases}
   (-\Delta)^{1/2}u=\lambda u +(I_\mu*F(u))f(u),\ \  \mbox{in}\ \mathbb{R},\\
   \displaystyle\int_{\mathbb{R}}|u|^2 \mathrm{d}x=a^2,
   \end{cases}
   \end{align*}
where  $(-\Delta)^{1/2}$ denotes the $1/2$-Laplacian operator, $a>0$, $\lambda\in \mathbb{R}$, $I_\mu(x)=\frac{{1}}{{|x|^\mu}}$ with $\mu\in(0,1)$, $F(u)$ is the primitive function of $f(u)$, and $f$ is a continuous function with exponential critical growth in the sense of the Trudinger-Moser inequality. By using a minimax principle based on the homotopy stable family, we obtain that there is at least one normalized ground state solution to the above equation.
}

\smallskip
\emph{\bf Keywords:} Normalized solutions; Fractional Choquard-type equation; Exponential critical growth.
\end{abstract}

\section{{\bfseries Introduction}}\label{introduction}

Recently, the following time-dependent fractional Choquard-type equation
\begin{align}\label{1.1}
i\frac{\partial\Psi}{\partial t}=(-\Delta)^s \Psi -(I_\mu*F(\Psi))f(\Psi), \quad \mbox{in} \   \mathbb{R} \times \mathbb{R}^N
\end{align}
has attracts much attention, where $i$ denotes the imaginary unit, $s\in (0,1)$,
 $I_\mu(x)=\frac{{1}}{{|x|^\mu}}$ with $\mu\in (0,N)$, $F$ is the primitive function of $f$, and $(-\Delta )^s$ is the fractional Laplacian operator defined by
\begin{align*}
(-\Delta )^su(x):=C({N,s})\ \mbox{P.V.}\int_{\mathbb{R}^N}\frac{u(x)-u(y)}{|x-y|^{N+2s}}\mathrm{d}y, \quad \text {in $\mathbb{R}^N$},
\end{align*}
for $u\in C_0^\infty(\mathbb{R}^N)$,
where P.V. means the Cauchy principal value and $C({N,s})$ is some positive normalization constant, we refer to \cite{di} for more details.
When we searching for stationary waves of $(\ref{1.1})$ with the form $\Psi(t,x)=e^{-i\lambda t}u(x)$ and $\lambda\in\mathbb{R}$, then $u$ solves the following equation
\begin{align}\label{1.2}
(-\Delta)^su=\lambda u +(I_\mu*F(u))f(u), \quad \text{in $\mathbb{R}^N$}.
\end{align}
To get solutions of $(\ref{1.2})$, one way is to fix $\lambda\in \mathbb{R}$ and look for solutions of $(\ref{1.2})$
as critical points of the energy functional $\hat J:H^{s}(\mathbb{R}^{N})\rightarrow \mathbb{R}$ (see e.g. \cite{cle,dSS,SGY,ZW})
\begin{align*}
\hat J(u)=\frac{1}{2} \int_{\mathbb{R}^N}(|(-\Delta)^{\frac{s}{2}}u|^2-\lambda u^2
)\mathrm{d}x-\frac{1}{2} \int_{\mathbb{R}^N}(I_\mu*F(u))F(u)\mathrm{d}x
\end{align*}
with
\begin{equation*}
	\int_{\mathbb{R}^N}|(-\Delta)^{\frac{s}{2}}u|^2dx=\int_{\mathbb{R}^N}\int_{\mathbb{R}^N}\frac{|u(x)-u(y)|^2}{|x-y|^{N+2s}}\,\mathrm{d}x\mathrm{d}y,
\end{equation*}
where $H^{s}(\mathbb{R}^{N})$ is a Hilbert space with the inner product and norm respectively
$$
\langle u,v\rangle=\int_{\mathbb{R}^{N}}(-\Delta)^{s/2}u(-\Delta)^{s/2}v\mathrm{d}x+\int_{\mathbb{R}^{N}} uv\mathrm{d}x,
$$
$$
\|u\|=\Big(\int_{\mathbb{R}^{N}}|(-\Delta)^{s/2}u|^{2}\mathrm{d}x+\int_{\mathbb{R}^{N}}|u|^{2}\mathrm{d}x\Big)^{1/2}.
$$

Another important way is to prescribe the $L^2$-norm of the unknown $u$, and $\lambda\in \mathbb{R}$ appears as a Lagrange multiplier, that is to consider the following problem
\begin{equation}\label{problem1}
	 \begin{cases}
   (-\Delta)^su=\lambda u +(I_\mu*F(u))f(u),\ \  \mbox{in}\ \mathbb{R}^N,\\
   \displaystyle\int_{\mathbb{R}^N}|u|^2 \mathrm{d}x=a^2.
   \end{cases}
\end{equation}
for any fixed $a>0$. This type of solutions is called normalized solution, and can be obtained by
 looking for critical points of the following energy functional
\begin{align*}
\bar J(u)=\frac{1}{2} \int_{\mathbb{R}^N} |(-\Delta)^{\frac{s}{2}}u|^2
\mathrm{d}x-\frac{1}{2} \int_{\mathbb{R}^N}(I_\mu*F(u))F(u)\mathrm{d}x
\end{align*}
on the $L^2$-sphere $$\bar S(a):=\Big\{u\in H^s(\mathbb{R}^N):\int_{\mathbb{R}^N}|u|^2 \mathrm{d}x=a^2\Big\}.$$
In particular, we are interested in looking for ground state solutions,  i.e., solutions minimizing $\bar J$ on $\bar S(a)$ among all nontrivial solutions, and the associated energy is called ground state energy.

The study of normalized solutions for the following semilinear elliptic equation
\begin{equation}\label{problem2}
	 \begin{cases}
   -\Delta u=\lambda u +g(u),\ \  \mbox{in}\ \mathbb{R}^N,\\
  \displaystyle \int_{\mathbb{R}^N}|u|^2 \mathrm{d}x=a^2,
   \end{cases}
\end{equation}
goes back to the pioneering work \cite{S1,S2} by Stuart. In that paper, Stuart dealt with problem \eqref{problem2} for $g(u)=|u|^{p-2}u$ and $p\in(2,2+\frac{4}{N})$ ($L^2$-subcritical case), here $2+\frac{4}{N}$ is called the $L^2$-critical exponent, which comes from the Gagliardo-Nirenberg inequality \cite{Nirenberg1}. When $g$ is $L^2$-supercritical growth, by using a mountain pass structure for the stretched functional, Jeanjean \cite{jeanjean} first obtained a normalized ground state solution of problem \eqref{problem2}.
By using a minimax principle based on the homotopy stable family,
Bartsch and Soave \cite{soave1,soave2} also presented a new approach that is based on a natural constraint associated to the problem and proved the existence of normalized solutions for problem \eqref{problem2}. Inspired by \cite{jeanjean,soave1,soave2}, Soave \cite{soave} studied problem \eqref{problem2} with combined nonlinearities $g(u)=\mu|u|^{q-2}u+|u|^{p-2}u$, $2<q\leq 2+\frac{4}{N}\leq p<2^*$ and $q<p$, where $2^*=\infty$ if $N\leq2$ and $2^*=\frac{2N}{N-2}$ if $N\geq3$. The Sobolev critical case $p=2^*$ and $N\geq3$ was considered by Soave \cite{soave3}. Furthermore, the above results are later generalized to exponential critical case by Alves et al.\cite{alvesji} and the fractional Laplace case by Luo and Zhang \cite{LZ}, Zhang and Han \cite{ZH}, Zhen and Zhang \cite{ZZ}.
More results of normalized solutions for scalar equations and systems can be found in \cite{barde,bartschjean1,bartschjean,BM,Li1,zhongzou,jeanjeanlu,wei}.

If $s=1$, Li and Ye \cite{liye}  considered problem $(\ref{problem1})$. Under a set of assumptions on $f$, with a similar argument of \cite{jeanjean}, they obtained a ground state solution when $N\geq3$. Yuan et al. \cite{yuan} generalized the above result from $f\in C^1(\mathbb{R},\mathbb{R})$ to  $f\in C(\mathbb{R},\mathbb{R})$ and obtained a ground state solution when $N=3$. Furthermore, Bartsch et al. \cite{bartsch} studied problem $(\ref{problem1})$ in all dimensions, and obtained infinitely many radial solutions if $f$ is odd.
In the case $N=2$ and $f$ has exponential critical growth, the existence of normalized solutions of \eqref{problem1} has been discussed by Deng and Yu \cite{dengyu}.
The fractional case of problem $(\ref{problem1})$ with general nonlinearities is also considered, see \cite{liluo} for more details.
For more results of normalized solutions for the Choquard equation, we refer the readers to \cite{cao,chenwang1,chenwang,liluo,liluoyang,yao} and references therein.
In particular,
Chen and Wang \cite{chenwang1} studied normalized solutions for a biharmonic equation with a Choquard nonlinearity involving exponential critical growth in $\mathbb{R}^4$. Different from the method of \cite{chenwang1}, the authors in \cite{chenwang} gave an another view to the same problem by using the minimax principle based on the homotopy stable family and used a more natural growth condition to estimate the upper bound of the ground state energy based on the Adams function \cite{LY3}.

Motivated by the results above, considering that there is no results on normalized solutions for the fractional Choquard-type equation with exponential critical growth in dimension one.
In this paper, we are interested in the following problem
\begin{equation}\label{problem}
	 \begin{cases}
   (-\Delta)^{1/2}u=\lambda u +(I_\mu*F(u))f(u),\ \  \mbox{in}\ \mathbb{R},\\
  \displaystyle \int_{\mathbb{R}}|u|^2 \mathrm{d}x=a^2,
   \end{cases}
\end{equation}
where  $(-\Delta)^{1/2}$ denotes the 1/2-Laplacian operator, $a>0$ is prescribed, $\lambda\in \mathbb{R}$, $I_\mu(x)=\frac{{1}}{{|x|^\mu}}$ with $\mu\in(0,1)$, $F(u)$ is the primitive function of $f(u)$, and $f$ is a continuous function with exponential critical growth. In order to use a variational approach, the maximal growth is motivated by
the Trudinger-Moser inequality first given by Ozawa \cite{ozawa} and later extended by Iula et al. \cite{lula} (see also \cite{kozono,Taka}). 
More precisely, we assume that $f$ satisfies:

$(f_1)$   $f\in C(\mathbb{R},\mathbb{R})$, and $\lim\limits_{t\to0}\frac{|f(t)|}{|t|^\kappa}=0$ for some $\kappa>2-\mu$;

$(f_2)$  $f$ has exponential critical growth at infinity, i.e.,
\begin{align*}
\lim\limits_{|t|\to+\infty}\frac{|f(t)|}{e^{\alpha t^2}}=
\begin{cases}
0,\quad  &\mbox{for}\ \ \alpha>\pi,\\
+\infty,\quad &\mbox{for}\ \ 0<\alpha <\pi;
\end{cases}
\end{align*}

$(f_3)$ There exists a constant $\theta>3-\mu$ such that $0<\theta F(t)\leq tf(t)$ for all $t\neq 0$; 

$(f_4)$ There exist $M_0>0$ and $t_0>0$ such that $F(t)\leq M_0|f(t)|$ for any $|t|\geq t_0$.

$(f_5)$ For any $t\in{\mathbb{R} \backslash \{0\}}$, define $\widetilde F(t):=f(t)t-(2-\mu)F(t)$, then
\begin{align*}
\frac{\widetilde F(t)}{|t|^{3-\mu}} \ \mbox{is non-increasing in $(-\infty,0)$ and non-decreasing in $(0,+\infty)$.}
\end{align*}

$(f_6)$ There exists $\beta_0>0$ such that $\liminf\limits_{t\to+\infty}\frac{f(t)}{e^{\pi t^2}}\geq {\beta_0}$.

Our main result can be stated as follows:

\begin{theorem}\label{thm1.1}
Assume that $f$ satisfies $(f_1)$-$(f_6)$, then problem $(\ref{problem})$ has at least one ground state solution.
\end{theorem}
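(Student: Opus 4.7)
The plan is to work variationally on the $L^2$-sphere $S(a):=\{u\in H^{1/2}(\mathbb{R}):\|u\|_2^2=a^2\}$ with the energy
\begin{equation*}
J(u):=\frac{1}{2}\int_\mathbb{R}|(-\Delta)^{1/4}u|^2\,\mathrm{d}x-\frac{1}{2}\int_\mathbb{R}(I_\mu*F(u))F(u)\,\mathrm{d}x,
\end{equation*}
and to follow the Bartsch--Soave strategy based on a Pohozaev-type natural constraint combined with Ghoussoub's homotopy stable family minimax, as in \cite{soave1,soave2,chenwang}. I introduce the $L^2$-preserving fiber $u_t(x):=t^{1/2}u(tx)$, under which $\|(-\Delta)^{1/4}u_t\|_2^2=t\|(-\Delta)^{1/4}u\|_2^2$ while the Choquard term scales as $t^{\mu-2}$ (with $F$ evaluated at $t^{1/2}u$). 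Using $(f_1)$--$(f_3)$ and $(f_5)$, the Hardy--Littlewood--Sobolev inequality, and the Trudinger--Moser inequality of \cite{ozawa,lula}, I would show that for each $u\in S(a)$ the fiber map $\Psi_u(t):=J(u_t)$ is positive and increasing near $t=0$, tends to $-\infty$ as $t\to\infty$, and admits a unique global maximum at some $t_u>0$; the monotonicity in $(f_5)$ is the key ingredient for the uniqueness of $t_u$ and for the $C^1$-regularity of the Pohozaev manifold
\begin{equation*}
\mathcal{P}(a):=\{u\in S(a):\Psi_u'(1)=0\},\qquad m(a):=\inf_{u\in\mathcal{P}(a)}J(u)=\inf_{u\in S(a)}\max_{t>0}J(u_t).
\end{equation*}

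Next, I apply Ghoussoub's minimax principle to the homotopy stable family of singletons in $S(a)$ (adapted to the $L^2$-constraint as in \cite{soave1,soave2,chenwang}) to produce a Palais--Smale sequence $\{u_n\}\subset S(a)$ at the level $m(a)$ that moreover satisfies $P(u_n)\to 0$, where $P(u):=\Psi_u'(1)$ is the Pohozaev functional. Combining $(f_3)$ with the identity $P(u_n)=o(1)$ and $J(u_n)=m(a)+o(1)$ gives an a priori bound on $\|(-\Delta)^{1/4}u_n\|_2^2$, so $\{u_n\}$ is bounded in $H^{1/2}(\mathbb{R})$; the fact that $\theta>3-\mu$ in $(f_3)$ is precisely the exponent needed to dominate both the kinetic term and the Choquard scaling.

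The central difficulty is the upper estimate $m(a)<c^{*}$, where $c^{*}$ denotes the concentration threshold imposed by the sharp constant $\alpha=\pi$ in the one-dimensional fractional Trudinger--Moser inequality of \cite{ozawa,lula}. Plugging an $L^2$-normalized one-dimensional fractional Moser--Adams sequence (in the spirit of \cite{chenwang}) into $\Psi_u$ and exploiting $(f_6)$ to capture the sharp contribution of the Choquard--exponential interaction, I would show that $\max_{t>0}\Psi_u$ along this profile stays strictly below $c^{*}$. This test-function estimate is what I expect to be the main obstacle, since the exponential critical nonlinearity, the nonlocal convolution, and the prescribed-mass constraint must all be balanced simultaneously, and only a delicate renormalization of the Moser sequence combined with a sharp use of $(f_6)$ and the Hardy--Littlewood--Sobolev inequality will produce the required strict gap.

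Once the sub-threshold bound is established, I rule out vanishing via the $H^{1/2}$ Lions-type lemma, which (since no radial compactness is available in $\mathbb{R}$) requires a translation $u_n(\cdot+y_n)\rightharpoonup u\ne 0$ in $H^{1/2}(\mathbb{R})$. Using $(f_4)$ to handle the limit of $(I_\mu*F(u_n))f(u_n)$ in the exponential-critical regime (via a Br\'ezis--Lieb decomposition for the Choquard integral together with Trudinger--Moser integrability), the condition $P(u_n)\to 0=P(u)$ upgrades weak convergence to strong convergence in $H^{1/2}(\mathbb{R})$, so $u\in S(a)\cap\mathcal{P}(a)$ with $J(u)=m(a)$. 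A Lagrange-multiplier argument then yields $\lambda\in\mathbb{R}$ such that $u$ solves $(\ref{problem})$; a Pohozaev/Nehari computation using $(f_3)$ and $(f_5)$ forces $\lambda<0$, confirming that $u$ is the desired normalized ground state solution.
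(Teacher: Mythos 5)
Your proposal correctly reproduces the architecture of the paper up to the final compactness step: the $L^2$-preserving fiber map $u\mapsto t^{1/2}u(t\cdot)$ (the paper's $\mathcal{H}(u,\beta)$ with $t=e^\beta$), the Poho\u{z}aev manifold $\mathcal{P}(a)$ and the uniqueness of the fiber maximum via $(f_5)$, the Ghoussoub homotopy-stable minimax producing a bounded $(PS)_{m(a)}$ sequence with $P(u_n)=0$, the Moser-type estimate $m(a)<(2-\mu)/4$ driven by $(f_6)$, the Lions lemma giving $u_n(\cdot+y_n)\rightharpoonup u\neq0$, and the sign $\lambda<0$ from the Lagrange multiplier. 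However, there is a genuine gap at the end.

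The assertion that ``the condition $P(u_n)\to 0=P(u)$ upgrades weak convergence to strong convergence in $H^{1/2}(\mathbb{R})$, so $u\in S(a)\cap\mathcal{P}(a)$'' is not justified and, as written, is false in this setting. On all of $\mathbb{R}$, with no radial symmetry available, a $(PS)$ sequence on the Poho\u{z}aev manifold can undergo dichotomy: one can have $u_n\rightharpoonup u$ with $0<a_1:=\|u\|_2<a$ and the remaining mass $a^2-a_1^2$ escaping to spatial infinity. The identity $P(u_n)=P(u)=0$ together with convergence of the Choquard integrals gives convergence of the Gagliardo semi-norms, but says nothing about $\|u_n\|_2\to\|u\|_2$, which is the part that actually fails under dichotomy. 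You cannot conclude $u\in S(a)$ from the Poho\u{z}aev constraint alone.

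The paper's resolution of this is precisely what your outline omits: an entire section (Section~\ref{mono}, Lemmas~\ref{6.1}--\ref{6.3}) on the monotonicity of $a\mapsto m(a)$. The closing argument runs as follows. From $P(u)=0$, $(f_3)$, and Fatou one gets $J(u)\leq m(a)$. Since $u\in\mathcal{P}(a_1)$ with $a_1\leq a$, the non-increase of $m$ (Lemma~\ref{6.1}) gives $J(u)\geq m(a_1)\geq m(a)$, hence $J(u)=m(a_1)=m(a)$, so $u$ is a ground state at mass $a_1$ with $\lambda<0$. Lemma~\ref{6.2}/\ref{6.3} then give \emph{strict} decrease of $m$ near $a_1$, which forces $a_1=a$; only then is $u\in S(a)$ and a ground state for the original problem. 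Without establishing this monotonicity (and in particular the strict decrease, which uses $\lambda<0$ and a fiber-perturbation argument), the proof does not close. You should add this ingredient rather than invoke unproved strong convergence.
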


\begin{remark}
{\rm A typical example satisfying $(f_1)$-$(f_6)$ is
\begin{align*}
f(t)=|t|^{p-2}te^{\pi t^2},\quad \forall\ p>\max\{3,\kappa+1,\theta\}.
\end{align*}}
\end{remark}

This paper is organized as follows. Section \ref{sec preliminaries} contains some preliminaries. In Section \ref{vf}, we give the variational framework of problem \eqref{problem}.  Section \ref{minimax} is devoted to establish an upper estimation of the ground state energy. The monotonicity of the ground state energy with respect to the mass is studied in Section \ref{mono}. In Section \ref{ps}, we use the minimax principle based on the homotopy stable family to construct a bounded $(PS)$ sequence. Finally, in Section \ref{proof}, we give the proof of Theorem \ref{thm1.1}. Throughout this paper, we will use the notation $\|\cdot\|_q:=\|\cdot\|_{L^q(\mathbb{R}^N)}$, $q\in [1,\infty]$, $B_r(x):=\{y\in \mathbb{R}:|y-x|<r\}$ is the open ball of radius $r$ around $x$, $C,C_i,i\in \mathbb{N}^+$ denote positive constants possibly different from line to line.

\section{{\bfseries Preliminaries}}\label{sec preliminaries}

In this section, we give some preliminaries. For the fractional Laplacian operator, the special case when $s=1/2$ is called the square of the Laplacian. We recall the definition of the fractional Sobolev space
\begin{equation*}
H^{1/2}(\mathbb{R})= \Big\{ u \in L^2(\mathbb{R}): \int_{\mathbb{R}}\int_{\mathbb{R}}\frac{|u(x)-u(y)|^2}{|x-y|^2}\mathrm{d}x\mathrm{d}y<\infty \Big\},
\end{equation*}
endowed with the standard norm
\begin{align*}
\|u\|_{1/2}=\Big(\frac{1}{2\pi}[u]_{1/2}^2+\int_{\mathbb{R}}|u|^2\mathrm{d}x\Big)^{1/2},
\end{align*}
where the term
\begin{equation*}
	[u]_{1/2}=\Big(\int_{\mathbb{R}}\int_{\mathbb{R}}\frac{|u(x)-u(y)|^2}{|x-y|^{2}}\,\mathrm{d}x\mathrm{d}y\Big)^{1/2}
\end{equation*}
denotes the Gagliardo semi-norm of a function $u$. Moreover, by \cite [Proposition 3.6]{di}, we have
\begin{align*}
\|(-\Delta)^{1/4}u\|_2^2=\frac{1}{2\pi}\int_{\mathbb{R}}\int_{\mathbb{R}}\frac{|u(x)-u(y)|^2}{|x-y|^{2}}\,\mathrm{d}x\mathrm{d}y\ \ \mbox{for all}\ \ u\in H^{1/2}(\mathbb{R}).
\end{align*}

Next, we recall the Hardy-Littlewood-Sobolev inequality.
\begin{proposition}\label{hardy}
\cite[Theorem 4.3]{Lieb} Let $1<r,t<\infty$ and $0<\mu<N$ with $\frac{1}{r}+\frac{1}{t}+\frac{\mu}{N}=2$. If $f\in L^r(\mathbb{R}^N)$ and $h\in L^t(\mathbb{R}^N)$, then there exists a sharp constant $C(N,\mu,r,t)>0$ such that
\begin{align}\label{HLS}
\int_{\mathbb{R}^N}\int_{\mathbb{R}^N}\frac{f(x)h(y)}{|x-y|^\mu}\mathrm{d}x\mathrm{d}y\leq C(N,\mu,r,t)\|f\|_r\|h\|_t.
\end{align}
\end{proposition}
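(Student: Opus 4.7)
The plan is to recast the double integral as a dual pairing with the Riesz potential of order $\alpha:=N-\mu\in(0,N)$, and then establish the resulting $L^t\to L^{r'}$ mapping property through a pointwise maximal-function bound. Setting $I_\alpha h(x):=\int_{\mathbb{R}^N}|x-y|^{-\mu}h(y)\,\mathrm{d}y$, Fubini together with Hölder's inequality gives
\[
\int_{\mathbb{R}^N}\int_{\mathbb{R}^N}\frac{f(x)h(y)}{|x-y|^\mu}\,\mathrm{d}x\mathrm{d}y=\int_{\mathbb{R}^N} f(x)\,(I_\alpha h)(x)\,\mathrm{d}x\le\|f\|_r\,\|I_\alpha h\|_{r'},
\]
where $r'$ is the Hölder conjugate of $r$. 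The scaling relation $1/r+1/t+\mu/N=2$ rewrites as $1/r'=1/t-\alpha/N$, so the claim \eqref{HLS} reduces to the boundedness $I_\alpha:L^t(\mathbb{R}^N)\to L^{r'}(\mathbb{R}^N)$ with some constant depending on $N,\mu,t$.

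For that boundedness I would use Hedberg's splitting. For each $x$ and each $R>0$, write $I_\alpha h(x)$ as the sum of its integrals over $\{|x-y|<R\}$ and $\{|x-y|\ge R\}$. A dyadic decomposition of the ball estimates the near part by $CR^{\alpha}\,Mh(x)$, where $M$ is the Hardy--Littlewood maximal operator. Hölder's inequality applied to the far part yields a factor $\bigl(\int_{|z|\ge R}|z|^{-\mu t'}\,\mathrm{d}z\bigr)^{1/t'}\|h\|_t\le CR^{\alpha-N/t}\,\|h\|_t$; convergence of the kernel integral here uses $\mu t'>N$, which follows from the scaling relation combined with the admissibility constraint $1<r<\infty$. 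Optimising in $R$ produces the Hedberg pointwise estimate
\[
I_\alpha h(x)\le C\,(Mh(x))^{1-\alpha t/N}\,\|h\|_t^{\,\alpha t/N}.
\]
Raising to the power $r'$, integrating, and applying the classical $L^t$-boundedness of $M$ (legitimate since $t>1$) gives $\|I_\alpha h\|_{r'}\le C(N,\mu,t)\,\|h\|_t$, which together with the Hölder step above yields \eqref{HLS}. The sharp constant $C(N,\mu,r,t)$ then exists as the supremum of the normalised bilinear form over nontrivial $(f,h)\in L^r(\mathbb{R}^N)\times L^t(\mathbb{R}^N)$, and is positive by testing against any non-zero compactly supported pair.

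The main technical obstacle on this route is the Hedberg bound itself, which bundles the dyadic near/far splitting with the $L^t$-boundedness of the Hardy--Littlewood maximal function. If one instead sought the explicit numerical value of $C(N,\mu,r,t)$ together with a classification of its extremisers (the content of Lieb's theorem cited as \cite[Theorem 4.3]{Lieb}), the strategy would change: one would first invoke the Riesz rearrangement inequality to reduce to non-negative, radial, symmetric-decreasing $f,h$, and then run Lieb's method of competing symmetries, alternating a spherical rearrangement with a conformal (stereographic) inversion until the iterates converge to the explicit extremiser. That sharp version is substantially more delicate because of the compactness issues arising from the non-compact symmetry group, but for the purposes of the present statement — which only claims the existence of a finite best constant — the Hedberg-based argument sketched above is enough.
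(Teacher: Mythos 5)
Your proposal is correct, but note that the paper offers no proof of this proposition at all: it is quoted as a known result from Lieb--Loss \cite[Theorem 4.3]{Lieb}, so the real comparison is between your route and the cited source. Your argument is the classical non-sharp proof via duality and Hedberg's pointwise inequality, and the exponent bookkeeping checks out: $1/r'=1/t-\alpha/N$ with $\alpha=N-\mu$, the far-field kernel is integrable because $\mu t'>N$ (equivalent to $1/r'>0$, i.e.\ $r>1$), and after optimising in $R$ one has $r'\bigl(1-\alpha t/N\bigr)=t$, so the $L^t$-boundedness of the maximal operator closes the estimate; the existence of a finite, positive best constant in \eqref{HLS} is then just the operator norm of the bilinear form, as you say. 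By contrast, Lieb--Loss prove the inequality by rearrangement and layer-cake arguments and, in the conformally invariant diagonal case, identify the explicit sharp constant and its extremisers (competing symmetries), which is the genuinely delicate part and is what the citation is really credited for. You correctly flag this distinction, and since the paper only ever uses \eqref{HLS} with some finite constant $C(N,\mu,r,t)$, your Hedberg-based argument fully suffices for the statement as used; the only cosmetic caveat is that one should run the argument with $|f|,|h|$ (Tonelli plus H\"older), which also covers the fact that the left-hand side of \eqref{HLS} is written without absolute values.
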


\begin{lemma}(Cauchy-Schwarz type inequality) \cite{Matt}
For $g,h\in L_{loc}^1(\mathbb{R}^N)$, there holds
\begin{equation}\label{CS}
  \int_{\mathbb R^{N}}(I_{\mu}\ast |g(x)|)|h(x)|\mathrm{d}x\leq \Big(\int_{\mathbb R^{N}}(I_{\mu}\ast |g(x)|)|g(x)|\mathrm{d}x\Big)^{\frac{1}{2}}\Big(\int_{\mathbb R^{N}}(I_{\mu}\ast |h(x)|)|h(x)|\mathrm{d}x\Big)^{\frac{1}{2}}.
\end{equation}
\end{lemma}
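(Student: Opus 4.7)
The plan is to view the inequality as the Cauchy--Schwarz inequality for the symmetric bilinear form
$B(\phi,\psi):=\int_{\mathbb{R}^N}\int_{\mathbb{R}^N}\phi(x)\psi(y)|x-y|^{-\mu}\,\mathrm{d}x\,\mathrm{d}y$
evaluated at $\phi=|g|$, $\psi=|h|$. The standard polarization argument for bilinear forms gives $|B(\phi,\psi)|\le B(\phi,\phi)^{1/2}B(\psi,\psi)^{1/2}$ provided $B$ is symmetric and positive semi-definite on non-negative inputs, so the whole content of the lemma reduces to the positivity $B(\phi,\phi)\ge 0$, i.e.\ to the fact that the Riesz kernel $|x|^{-\mu}$ is positive definite on $\mathbb{R}^N$.

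To prove this positivity I would use the Riesz composition formula. Set $\nu:=(N+\mu)/2\in(0,N)$, so that $2\nu-N=\mu$; then there exists a constant $C_{\mu,N}>0$ with
$|x-y|^{-\mu}=C_{\mu,N}\int_{\mathbb{R}^N}|x-z|^{-\nu}|z-y|^{-\nu}\,\mathrm{d}z$
for every $x\ne y$. Substituting into $B(\phi,\psi)$ and using Fubini (legitimate because the integrand is non-negative) gives $B(\phi,\psi)=C_{\mu,N}\int_{\mathbb{R}^N}\Phi(z)\Psi(z)\,\mathrm{d}z$, where $\Phi(z):=\int_{\mathbb{R}^N}\phi(x)|x-z|^{-\nu}\mathrm{d}x$ and $\Psi$ is defined analogously. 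Ordinary Cauchy--Schwarz in $L^2(\mathbb{R}^N)$ then yields $B(\phi,\psi)\le C_{\mu,N}\bigl(\int\Phi^2\bigr)^{1/2}\bigl(\int\Psi^2\bigr)^{1/2}$. Expanding $\Phi(z)^2$ as a double integral, swapping the order of integration, and reapplying the composition formula in reverse identifies $C_{\mu,N}\int_{\mathbb{R}^N}\Phi(z)^2\mathrm{d}z=B(\phi,\phi)$, with the analogous identity for $\Psi$ and $\psi$; multiplying out the constants closes the loop and delivers the claimed inequality.

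The main obstacle is integrability: for general $g,h\in L^1_{\mathrm{loc}}(\mathbb{R}^N)$ either $B(|g|,|g|)$ or $B(|h|,|h|)$ may be $+\infty$, in which case the inequality is trivially true and all the Fubini manipulations above are meaningless. I would therefore first dispose of those trivial cases and, in the non-trivial case, apply the scheme above to the monotone truncations $\phi_n:=\min(|g|,n)\chi_{B_n(0)}$ and $\psi_n:=\min(|h|,n)\chi_{B_n(0)}$, for which every integral is absolutely convergent and Fubini is justified unconditionally. Monotone convergence then transfers the inequality from the truncations to $|g|$ and $|h|$, since all integrands are non-negative and pointwise non-decreasing in $n$. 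An entirely equivalent route would use Plancherel together with $\widehat{|x|^{-\mu}}(\xi)=c_{N,\mu}|\xi|^{\mu-N}>0$, applying Cauchy--Schwarz directly in the weighted Hilbert space $L^2(|\xi|^{\mu-N}\mathrm{d}\xi)$; this is essentially the same argument, with the Fourier transform playing the role of the Riesz composition identity.
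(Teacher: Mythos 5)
Your argument is correct. Note, however, that the paper does not prove this lemma at all: it is quoted verbatim as a known result with a citation to Mattner \cite{Matt}, so there is no ``paper proof'' to match. Your write-up supplies exactly the standard argument behind that citation: the inequality is the Cauchy--Schwarz inequality for the bilinear form with kernel $|x-y|^{-\mu}$, and its validity rests on the positive definiteness of the Riesz kernel, which you obtain from the composition formula $\int_{\mathbb{R}^N}|x-z|^{-\nu}|z-y|^{-\nu}\,\mathrm{d}z=c\,|x-y|^{-\mu}$ with $\nu=(N+\mu)/2\in(0,N)$ (equivalently, from $\widehat{|x|^{-\mu}}=c_{N,\mu}|\xi|^{\mu-N}>0$). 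Two small remarks. First, the ``whole content'' is not the trivial positivity $B(\phi,\phi)\ge 0$ for $\phi\ge 0$, but the positive semidefiniteness of the kernel on signed functions; your factorization $B(\phi,\psi)=C_{\mu,N}\int\Phi\Psi$ followed by Cauchy--Schwarz in $L^2$ delivers the inequality directly and sidesteps this, so the abstract polarization remark in your first paragraph is dispensable. Second, since all quantities are non-negative, Tonelli and the $L^2$ Cauchy--Schwarz (valid with values in $[0,\infty]$) already give the inequality without any truncation; your truncation-and-monotone-convergence step is harmless but not needed, except to make the abstract bilinear-form route rigorous. Either way, the proof is sound and is essentially the classical one from the potential-theory literature.
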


\begin{lemma}\label{GN}
(The fractional Gagliardo-Nirenberg-Sobolev inequality) \cite{frank} Let $u\in H^s(\mathbb{R}^N)$ and $p\in [2,\frac{2N}{N-2s})$, then there exists a sharp constant $C(N,s,p)>0$ such that
\begin{align}\label{gns}
\int_{\mathbb{R}^N}|u|^p\mathrm{d}x\leq C(N,s,p)\Big( \int_{\mathbb{R}^N} |(-\Delta)^{\frac{s}{2}}u|^2\mathrm{d}x
\Big)^{\frac{N(p-2)}{4s}}\Big(
\int_{\mathbb{R}^N} |u|^2\mathrm{d}x
\Big)^{\frac{p}{2}-\frac{N(p-2)}{4s}}.
\end{align}
\end{lemma}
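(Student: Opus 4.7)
The inequality is a scale-invariant form of the Sobolev embedding and its proof naturally splits into two pieces: the inequality itself (with some constant) and the identification of the sharp constant.

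For the inequality, I would proceed by interpolation. Denote $2_s^{*}=\frac{2N}{N-2s}$. First, I would invoke the fractional Sobolev embedding $H^{s}(\mathbb{R}^N)\hookrightarrow L^{2_s^{*}}(\mathbb{R}^N)$, which gives $\|u\|_{2_s^{*}}\leq C_1\|(-\Delta)^{s/2}u\|_2$; this itself follows by representing $u$ via the Riesz potential $u=c_{N,s}(-\Delta)^{-s/2}((-\Delta)^{s/2}u)$ and applying the Hardy-Littlewood-Sobolev inequality of Proposition~\ref{hardy}. Second, for $p\in[2,2_s^{*})$ I would pick $\theta\in[0,1)$ by the H\"older balance
\begin{equation*}
\frac{1}{p}=\frac{1-\theta}{2}+\frac{\theta}{2_s^{*}},
\end{equation*}
which after direct arithmetic gives $p\theta=\frac{N(p-2)}{2s}$. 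H\"older's inequality with conjugate exponents $\frac{2}{p(1-\theta)}$ and $\frac{2_s^{*}}{p\theta}$ then yields
\begin{equation*}
\int_{\mathbb{R}^N}|u|^{p}\,\mathrm{d}x\leq \|u\|_2^{p(1-\theta)}\|u\|_{2_s^{*}}^{p\theta}\leq C_1^{p\theta}\,\|u\|_2^{p(1-\theta)}\|(-\Delta)^{s/2}u\|_2^{p\theta}.
\end{equation*}
Since $p\theta/2=\frac{N(p-2)}{4s}$ and $p(1-\theta)/2=\frac{p}{2}-\frac{N(p-2)}{4s}$, this is exactly the claimed estimate with $C=C_1^{p\theta}$.

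For the sharp constant I would set up the variational problem of minimizing the scale-invariant quotient
\begin{equation*}
\mathcal{J}(u)=\frac{\|(-\Delta)^{s/2}u\|_2^{p\theta}\,\|u\|_2^{p(1-\theta)}}{\|u\|_p^{p}}
\end{equation*}
over $H^{s}(\mathbb{R}^N)\setminus\{0\}$. The quotient is invariant under the two-parameter family of dilations $u(x)\mapsto\alpha u(\beta x)$, so along a minimizing sequence one can normalize both $\|u\|_2$ and $\|(-\Delta)^{s/2}u\|_2$ to fixed constants. One then applies Lions' concentration-compactness in the fractional setting: vanishing is ruled out because it would force $\|u_n\|_p\to 0$ against the normalization; strict subadditivity of the infimum excludes dichotomy; and after translation, the concentrated piece produces a nontrivial weak limit which, by the Br\'ezis-Lieb lemma for the $L^{p}$ denominator and weak lower semicontinuity of the homogeneous seminorm in the numerator, is an actual minimizer. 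A fractional Polya-Szeg\"o rearrangement inequality then shows the minimizer may be taken radial and decreasing.

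The main obstacle is precisely this compactness-modulo-symmetry step: the quotient is invariant under a non-compact group, so weak convergence is not automatic and one must carefully extract a profile. This is the heart of the analysis in \cite{frank}, which I would quote rather than reproduce in full. For the purposes of the later variational arguments in this paper only the non-sharp form matters, so in practice Steps 1-2 suffice and the sharpness assertion is invoked only qualitatively.
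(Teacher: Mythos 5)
The paper offers no proof of this lemma at all---it is quoted verbatim from \cite{frank}---so there is nothing internal to compare against; your two-step argument (fractional Sobolev embedding via the Riesz potential and Proposition~\ref{hardy}, then H\"older interpolation between $L^2$ and $L^{2_s^*}$ with $2_s^*=\frac{2N}{N-2s}$) is the standard derivation of the non-sharp inequality, and your exponent bookkeeping $p\theta=\frac{N(p-2)}{2s}$, $p(1-\theta)=p-\frac{N(p-2)}{2s}$ is correct. Deferring the sharp constant to \cite{frank} is also appropriate, since the paper itself does exactly that and only ever uses the inequality qualitatively.

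One caveat is worth flagging. Your Step 1 requires $N>2s$, since otherwise the critical exponent $2_s^*$ does not exist and the embedding $\dot H^{s}\hookrightarrow L^{2_s^*}$ fails. But the only place this paper invokes \eqref{gns} is Lemma~\ref{ous11}, with $N=1$ and $s=1/2$, i.e.\ exactly the borderline case $N=2s$ (the interval $[2,\frac{2N}{N-2s})$ is then read as $[2,\infty)$). Your interpolation through $L^{2_s^*}$ does not cover this case, so as written the proof does not justify the lemma in the form the paper actually uses it. The repair is short: for $N=2s$ start from the (non-homogeneous) embedding $H^{s}(\mathbb{R}^N)\hookrightarrow L^{p}(\mathbb{R}^N)$, valid for every finite $p\geq2$, apply it to the dilations $u_\lambda(x)=u(\lambda x)$, and use that $\|u_\lambda\|_2^2=\lambda^{-N}\|u\|_2^2$ while $\|(-\Delta)^{s/2}u_\lambda\|_2=\|(-\Delta)^{s/2}u\|_2$ is scale invariant when $N=2s$; optimizing over $\lambda>0$ yields precisely $\|u\|_p^p\leq C\,\|(-\Delta)^{s/2}u\|_2^{\,p-2}\,\|u\|_2^{2}$, which is \eqref{gns} in this case. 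Alternatively one can interpolate with a finite subcritical exponent $q>p$ and rescale. With that addition the argument covers every case the paper needs.
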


\begin{lemma}\label{tm}
(Full range Adachi-Tanaka-type on $H^{1/2}(\mathbb{R})$) \cite{Taka} It holds that
\begin{align}\label{att}
  \sup_{u\in H^{1/2}(\mathbb{R})\backslash \{0\},\|(-\Delta)^{1/4}u\|_2 \leq1}\frac{1}{\|u\|_2^2}\int_{\mathbb{R}}(e^{\alpha|u|^2}-1)\mathrm{d}x
  \begin{cases}
  <\infty, \quad \alpha<\pi,\\
  =\infty, \quad \alpha\geq\pi.
  \end{cases}
 \end{align}
\end{lemma}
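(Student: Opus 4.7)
The statement splits naturally into two parts: (i) finiteness of the supremum for $\alpha<\pi$, and (ii) its failure for $\alpha\geq\pi$. I would treat them by quite different arguments.

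\smallskip
\emph{Part (i): subcritical finiteness.} My starting point is the classical subcritical Trudinger--Moser--Ozawa inequality on $H^{1/2}(\mathbb{R})$: for each $\alpha<\pi$ there exists $C=C(\alpha)>0$ such that
\begin{equation*}
\int_{\mathbb{R}}(e^{\alpha u^2}-1)\,\mathrm{d}x\leq C
\end{equation*}
whenever $\|(-\Delta)^{1/4}u\|_2\leq 1$ and $\|u\|_2\leq 1$. To convert this fixed-norm estimate into the scale-invariant form of the lemma, I would exploit the fact that in dimension one with exponent $1/2$ the Gagliardo seminorm is scale invariant: if $v(x):=u(\lambda x)$ then $\|(-\Delta)^{1/4}v\|_2=\|(-\Delta)^{1/4}u\|_2$ and $\|v\|_2^2=\lambda^{-1}\|u\|_2^2$. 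Given any nontrivial $u$ with $\|(-\Delta)^{1/4}u\|_2\leq 1$, the choice $\lambda=\|u\|_2^2$ gives $\|v\|_2=1$, so the fixed-norm inequality applies to $v$, and undoing the change of variables yields
\begin{equation*}
\int_{\mathbb{R}}(e^{\alpha u^2}-1)\,\mathrm{d}x=\lambda\int_{\mathbb{R}}(e^{\alpha v^2}-1)\,\mathrm{d}y\leq C(\alpha)\|u\|_2^2,
\end{equation*}
which is precisely the bound required.

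\smallskip
\emph{Part (ii): sharpness.} To show that the supremum becomes infinite at the threshold, I would test against the Moser-type concentrating family
\begin{equation*}
\omega_n(x):=\frac{1}{\sqrt{\pi}}\begin{cases}(\log n)^{1/2},&|x|\leq 1/n,\\ (\log n)^{-1/2}\log(1/|x|),&1/n<|x|\leq 1,\\ 0,&|x|>1.\end{cases}
\end{equation*}
A direct computation gives $\|\omega_n\|_2^2=O(1/\log n)\to 0$, while on the cap $\pi\omega_n^2\equiv\log n$, so that
\begin{equation*}
\int_{|x|\leq 1/n}(e^{\pi\omega_n^2}-1)\,\mathrm{d}x=\frac{2(n-1)}{n}\longrightarrow 2.
\end{equation*}
Combined with the critical asymptotic $\|(-\Delta)^{1/4}\omega_n\|_2^2=1+o(1)$, after dividing by this seminorm to place $\omega_n$ in the admissible class, the quotient $\|\omega_n\|_2^{-2}\int(e^{\pi\omega_n^2}-1)\,\mathrm{d}x$ diverges at least like $\log n$, proving the claim at $\alpha=\pi$; monotonicity in $\alpha$ handles $\alpha>\pi$.

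\smallskip
\emph{Main obstacle.} The delicate step is the fractional seminorm computation for $\omega_n$: because $(-\Delta)^{1/4}$ is nonlocal, one cannot replace the double integral by $\int|\omega_n'|^2$, and a subtle cancellation among the cap, the logarithmic annulus, and the exterior must be tracked. My plan is to use the harmonic extension identity $\|(-\Delta)^{1/4}u\|_2^2=\inf\{\|\nabla U\|_{L^2(\mathbb{R}^2_+)}^2:U|_{y=0}=u\}$, extend $\omega_n$ via Poisson integration, and isolate the leading logarithmic behavior that yields the $1+o(1)$ asymptotic with the sharp constant $\pi$. The subcritical part is essentially routine once the sharp-range fixed-norm inequality is available.
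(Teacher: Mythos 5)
Two things to flag. First, the paper does not prove this lemma at all: it is quoted verbatim from Takahashi \cite{Taka}, so your proposal has to be measured against the literature proof rather than anything in the text. Against that benchmark, your Part (i) has a circularity problem. The input you call ``classical'' --- finiteness of $\int_{\mathbb{R}}(e^{\alpha u^2}-1)\,\mathrm{d}x$ for \emph{every} $\alpha<\pi$ under the two separate constraints $\|(-\Delta)^{1/4}u\|_2\le1$ and $\|u\|_2\le1$ --- is not Ozawa's theorem (Ozawa only gives some nonsharp $\alpha_0\le\pi$), and by your own scaling identity it is \emph{equivalent} to the Adachi--Tanaka form you are trying to prove, so as written the subcritical part assumes the conclusion. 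It can be repaired with the same scaling idea but a different normalization: since the seminorm is scale invariant and both sides of the desired inequality scale linearly, you may normalize $\|u\|_2^2=\varepsilon$ with $\alpha(1+\varepsilon)\le\pi$, apply the critical full-norm inequality of Iula--Maalaoui--Martinazzi \cite{lula} (or Kozono--Sato--Wadade) to $u/\sqrt{1+\varepsilon}$, and observe that the resulting constant bound equals $(C/\varepsilon)\|u\|_2^2$. Normalizing $\|u\|_2=1$ as you do only lets the critical full-norm inequality cover $\alpha\le\pi/2$, because the full $H^{1/2}$-norm of such $u$ can be as large as $\sqrt{2}$.

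Second, in Part (ii) the test family and the cap computation are the right ones (they are exactly the $\varpi_n$ the paper uses in Section 4), but the decisive estimate is left as a plan, and the asymptotic you quote is too weak to close the argument. After renormalizing $u_n=\varpi_n/\|(-\Delta)^{1/4}\varpi_n\|_2$ to enter the admissible class, the cap contribution becomes roughly $2\,n^{-\delta_n/(1+\delta_n)}$ with $\delta_n:=\|(-\Delta)^{1/4}\varpi_n\|_2^2-1$, so the quotient behaves like $\tfrac{\pi}{2}(\log n)\,e^{-\delta_n\log n}$: it diverges only if $\delta_n\log n$ stays bounded (e.g. $\delta_n=O(1/\log n)$), while a mere $\delta_n=o(1)$ --- say $\delta_n=(\log n)^{-1/2}$ --- would make it tend to $0$. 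So the harmonic-extension computation you defer must deliver the quantitative bound $\delta_n=O(1/\log n)$, which is precisely the nontrivial estimate in Takahashi's and Iula--Maalaoui--Martinazzi's proofs; until it is carried out, the sharpness claim at $\alpha=\pi$ is unproven (the reduction of $\alpha>\pi$ to $\alpha=\pi$ by monotonicity of the integrand is fine).
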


\begin{lemma} \cite[Lemma 4.8]{Kavian}
Let $\Omega \subseteq \mathbb{R}$ be any open set. For $1<s<\infty$, let $\{u_n\}$ be bounded in $L^s(\Omega)$ and $u_n(x)\to u(x)$ a.e. in $\Omega$. Then $u_n(x) \rightharpoonup u(x)$ in $L^s(\Omega)$.
\end{lemma}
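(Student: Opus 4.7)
The plan is to exploit reflexivity of $L^s(\Omega)$ and identify the weak limit by testing against a dense class of functions. First I would verify that $u\in L^s(\Omega)$: applying Fatou's lemma to $|u_n|^s$ gives $\|u\|_s\le\liminf\|u_n\|_s<\infty$, so the claimed weak limit lies in the right space. Since $\{u_n\}$ is bounded in the reflexive space $L^s(\Omega)$ for $1<s<\infty$, every subsequence of $\{u_n\}$ admits a further subsequence converging weakly to some $v\in L^s(\Omega)$. By the standard subsequence principle, in order to conclude that the full sequence satisfies $u_n\rightharpoonup u$, it suffices to show that any such weak cluster point $v$ must coincide with $u$.

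To identify $v$, I would test against $\phi\in C_c(\Omega)$, which is dense in the dual $L^{s'}(\Omega)$ with $s'=s/(s-1)$. On one hand, weak convergence gives $\int_{\Omega}u_{n_k}\phi\,\mathrm{d}x\to\int_\Omega v\phi\,\mathrm{d}x$. On the other hand, letting $K$ denote the compact support of $\phi$, for every $\varepsilon>0$ Egorov's theorem produces a measurable $E_\varepsilon\subset K$ with $|E_\varepsilon|<\varepsilon$ such that $u_{n_k}\to u$ uniformly on $K\setminus E_\varepsilon$. Splitting
\[
\int_K(u_{n_k}-u)\phi\,\mathrm{d}x=\int_{K\setminus E_\varepsilon}(u_{n_k}-u)\phi\,\mathrm{d}x+\int_{E_\varepsilon}(u_{n_k}-u)\phi\,\mathrm{d}x,
\]
the first term vanishes as $k\to\infty$ by uniform convergence on $K\setminus E_\varepsilon$, while the second is controlled by H\"older's inequality:
\[
\Big|\int_{E_\varepsilon}(u_{n_k}-u)\phi\,\mathrm{d}x\Big|\le\|\phi\|_\infty\bigl(\|u_{n_k}\|_s+\|u\|_s\bigr)|E_\varepsilon|^{1/s'}=O(\varepsilon^{1/s'}),
\]
uniformly in $k$. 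Letting $k\to\infty$ and then $\varepsilon\to 0$ yields $\int u_{n_k}\phi\,\mathrm{d}x\to\int u\phi\,\mathrm{d}x$. Combining the two identifications, $\int(u-v)\phi\,\mathrm{d}x=0$ for all $\phi\in C_c(\Omega)$, which forces $u=v$ almost everywhere.

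The main obstacle is that pointwise a.e.\ convergence alone gives no control on the tails or on sets where the sequence may concentrate, and there is no obvious dominating function, so one cannot simply quote dominated convergence. This is exactly where Egorov's theorem together with the uniform $L^s$-bound intervenes: compact support of the test function reduces matters to a set of finite measure, Egorov converts a.e.\ convergence into uniform convergence off a small exceptional set, and the H\"older estimate with exponent $s'<\infty$ (which requires the hypothesis $s>1$) absorbs the contribution of that exceptional set.
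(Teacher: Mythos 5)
The paper does not prove this lemma at all; it is quoted directly from \cite[Lemma 4.8]{Kavian} as a known result. Your argument is a correct, self-contained proof and is essentially the standard one: Fatou to place $u$ in $L^s$, reflexivity plus the subsequence principle to reduce to identifying weak cluster points, and Egorov combined with the H\"older estimate on the small exceptional set (where $s>1$, i.e. $s'<\infty$, is used) to show every such cluster point equals $u$ a.e., so nothing further is needed.
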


  \section{{\bfseries The variational framework}}\label{vf}

Equation $(\ref{problem})$ has a variational structure and its associated energy functional $J: H^{1/2}(\mathbb{R})\to\mathbb{R}$ is defined by
 \begin{align*}
 J(u)=\frac{1}{2}\|(-\Delta)^{1/4}u  \|_2^2-\frac{1}{2}\int_{\mathbb{R}}(I_\mu*F(u))F(u)\mathrm{d}x.
 \end{align*}
  By using assumptions $(f_1)$ and $(f_2)$, it follows that for each $\zeta>0$, $q>1$ and $\alpha>\pi$, there exists $C>0$ such that
\begin{align*}
|f(t)|\leq \zeta |t|^{\kappa}+C|t|^{q-1}(e^{\alpha t^2}-1) \quad \mbox{for all} \ \ t\in\mathbb{R},
\end{align*}
and using $(f_3)$, we have
\begin{align}\label{Ft}
|F(t)|\leq \zeta |t|^{\kappa+1}+C|t|^{q} (e^{\alpha t^2}-1) \quad \mbox{for all} \ \ t\in\mathbb{R}.
\end{align}
By \eqref{HLS} and \eqref{Ft}, we know $J$ is well defined in $H^{1/2}(\mathbb{R})$ and $J\in C^1(H^{1/2}(\mathbb{R}),\mathbb{R})$ with
\begin{align*}
\langle J'(u),v\rangle=&
\frac{1}{2 \pi}\int_{\mathbb{R}}\int_{\mathbb{R}}\frac{[u(x)-u(y)][v(x)-v(y)]}{|x-y|^{2}}\mathrm{d}x\mathrm{d}y-\int_{\mathbb{R}}(I_\mu*F(u))f(u)v \mathrm{d}x,
\end{align*}
for any $u,v\in H^{1/2}(\mathbb{R})$. Hence, a critical point of $J$ on
\begin{equation*}
  S(a)=\Big\{u\in H^{1/2}(\mathbb{R}): \int_{\mathbb{R}}|u|^2 \mathrm{d}x=a^2\Big\}.
\end{equation*}
corresponds to a solution of \eqref{problem}.

To understand the geometry of $J|_{S(a)}$, for any $\beta\in \mathbb{R}$ and $u\in H^{1/2}(\mathbb{R})$, we define
\begin{equation*}
  \mathcal{H}(u,\beta)(x):=e^{\frac{\beta}{2}}u(e^\beta x),\quad \text{for a.e. $x\in \mathbb{R}$}.
\end{equation*}
One can easily check that $\| \mathcal{H}(u,\beta)\|_2=\|u\|_2$ for any $\beta\in \mathbb{R}$ and $ \mathcal{H}(u,\beta_1+\beta_2)=\mathcal{H}(\mathcal{H}(u,\beta_1),\beta_2)=\mathcal{H}(\mathcal{H}(u,\beta_2),\beta_1)$ for any $\beta_1,\beta_2\in \mathbb{R}$.
By Lemma \ref{mountain}, we find that $J$ is unbounded from below on $S(a)$.
 It is well known that all critical points of $J|_{S(a)}$ belong to the Poho\u{z}aev manifold (see \cite{dSS,MS2})
   \begin{align*}
   \mathcal{P}(a)=\big\{u\in S(a): P(u)=0\big\},
   \end{align*}
where
  \begin{align*}
  P(u)=\|(-\Delta)^{1/4}u  \|_2^2+
  (2-\mu)\int_{\mathbb{R}}(I_\mu*F(u))F(u)\mathrm{d}x -\int_{\mathbb{R}}(I_\mu*F(u))f(u)u\mathrm{d}x.
  \end{align*}
This enlightens us to consider the minimization of $J$ on $\mathcal{P}(a)$, i.e.,
\begin{align*}
   m(a)=\inf_{u\in\mathcal{P}(a)}J(u).
   \end{align*}
Our task is to show that $m(a)$ is a critical level of $J|_{S(a)}$. As will be shown in Lemma \ref{pa}, $\mathcal{P}(a)$ is nonempty, thus any critical point $u$ of $J|_{S(a)}$ with $J(u)=m(a)$ is a ground state solution of problem \eqref{problem}.

With a similar argument of \cite[Lemma 3.5]{BS2}, we have the following proposition.
\begin{proposition}\label{m}
Assume that $u_n\to u$ in $H^{1/2}(\mathbb{R})$ and $\beta_n\to \beta$ in $\mathbb{R}$ as $n\to\infty$, then $\mathcal{H}(u_n,\beta_n)\to \mathcal{H}(u,\beta)$ in $H^{1/2}(\mathbb{R})$ as $n\to\infty$.
\end{proposition}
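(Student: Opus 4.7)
The plan is to establish the joint continuity of $\mathcal{H}$ at $(u,\beta)$ by means of the decomposition
\[
\mathcal{H}(u_n,\beta_n)-\mathcal{H}(u,\beta)=\mathcal{H}(u_n-u,\beta_n)+\bigl[\mathcal{H}(u,\beta_n)-\mathcal{H}(u,\beta)\bigr],
\]
and treating the two pieces separately. The starting point is the pair of scaling identities $\|\mathcal{H}(v,\beta)\|_{2}=\|v\|_{2}$ (already noted in the text) and
\[
\|(-\Delta)^{1/4}\mathcal{H}(v,\beta)\|_{2}^{2}=e^{\beta}\,\|(-\Delta)^{1/4}v\|_{2}^{2},
\]
which I would verify by the change of variables $x'=e^{\beta}x$, $y'=e^{\beta}y$ inside the Gagliardo double integral. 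These yield the bound $\|\mathcal{H}(v,\beta)\|_{1/2}^{2}\leq \max\{1,e^{\beta}\}\,\|v\|_{1/2}^{2}$, so that $\mathcal{H}(\cdot,\beta)$ acts boundedly on $H^{1/2}(\mathbb{R})$ uniformly in $\beta$ over bounded intervals.

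For the first piece, the sequence $\{\beta_n\}$ is bounded and the uniform scaling bound gives $\|\mathcal{H}(u_n-u,\beta_n)\|_{1/2}\leq C\|u_n-u\|_{1/2}\to 0$ directly from the hypothesis. For the second piece it suffices to prove, for each fixed $u\in H^{1/2}(\mathbb{R})$, that $\beta\mapsto\mathcal{H}(u,\beta)$ is continuous from $\mathbb{R}$ into $H^{1/2}(\mathbb{R})$. I would run a three-$\varepsilon$ argument based on the density of $C_c^{\infty}(\mathbb{R})$ in $H^{1/2}(\mathbb{R})$: given $\varepsilon>0$ pick $\phi\in C_c^{\infty}(\mathbb{R})$ with $\|u-\phi\|_{1/2}<\varepsilon$ and estimate
\[
\|\mathcal{H}(u,\beta_n)-\mathcal{H}(u,\beta)\|_{1/2}\leq\|\mathcal{H}(u-\phi,\beta_n)\|_{1/2}+\|\mathcal{H}(\phi,\beta_n)-\mathcal{H}(\phi,\beta)\|_{1/2}+\|\mathcal{H}(\phi-u,\beta)\|_{1/2},
\]
whose first and third summands are $\leq C\varepsilon$ by the scaling bound above; everything then reduces to controlling the middle term for a single fixed test function.

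The main obstacle, and the one genuinely nontrivial step, is verifying $\|\mathcal{H}(\phi,\beta_n)-\mathcal{H}(\phi,\beta)\|_{1/2}\to 0$ for a fixed $\phi\in C_c^{\infty}(\mathbb{R})$. The $L^{2}$ part follows from dominated convergence using the uniform compact support of $\mathcal{H}(\phi,\beta_n)$ together with pointwise convergence, but the Gagliardo seminorm requires more care because $(-\Delta)^{1/4}\phi$ is not compactly supported. I would pass to Fourier variables via $\widehat{\mathcal{H}(\phi,\beta)}(\xi)=e^{-\beta/2}\hat{\phi}(e^{-\beta}\xi)$ and Plancherel, reducing the claim to
\[
\int_{\mathbb{R}}(1+|\xi|)\bigl|e^{-\beta_n/2}\hat{\phi}(e^{-\beta_n}\xi)-e^{-\beta/2}\hat{\phi}(e^{-\beta}\xi)\bigr|^{2}\,\mathrm{d}\xi\longrightarrow 0
\]
(with an innocuous normalization constant absorbed). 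Since $\hat{\phi}$ is Schwartz, the integrand tends to $0$ pointwise in $\xi$ and, for $\beta_n$ eventually confined to $[\beta-1,\beta+1]$, is dominated by a fixed integrable function built from the rapid decay of $\hat{\phi}$; dominated convergence then delivers the limit. Combining the two pieces yields $\mathcal{H}(u_n,\beta_n)\to\mathcal{H}(u,\beta)$ in $H^{1/2}(\mathbb{R})$, as claimed.
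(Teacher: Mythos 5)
Your argument is correct. A small point worth flagging about the framing: the paper does not actually write out a proof of Proposition 2.1 --- it only says the result follows ``with a similar argument of [BS2, Lemma 3.5],'' which is the local ($H^1$) analogue. So there is no printed proof to compare against line by line; what can be compared is the strategy, and yours matches it in structure. The decomposition $\mathcal{H}(u_n,\beta_n)-\mathcal{H}(u,\beta)=\mathcal{H}(u_n-u,\beta_n)+[\mathcal{H}(u,\beta_n)-\mathcal{H}(u,\beta)]$ together with the exact scaling identities $\|\mathcal{H}(v,\beta)\|_2=\|v\|_2$ and $[\mathcal{H}(v,\beta)]_{1/2}^2=e^{\beta}[v]_{1/2}^2$ is precisely the skeleton of the Bartsch--Soave argument, and your uniform bound $\|\mathcal{H}(v,\beta)\|_{1/2}^2\le\max\{1,e^\beta\}\|v\|_{1/2}^2$ is the right way to dispose of the first piece once $\{\beta_n\}$ is confined to a bounded interval.

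Where you genuinely go your own way is in the treatment of the seminorm convergence for a fixed test function $\phi\in C_c^\infty(\mathbb{R})$. In the $H^1$ case of BS2 this step is handled by a direct dominated-convergence argument on $\nabla\mathcal{H}(\phi,\beta_n)=e^{(1+N/2)\beta_n}(\nabla\phi)(e^{\beta_n}\cdot)$, which stays compactly supported; as you rightly observe, the fractional seminorm does not localize that way, so something has to change. Passing to the Fourier side via $\widehat{\mathcal{H}(\phi,\beta)}(\xi)=e^{-\beta/2}\hat{\phi}(e^{-\beta}\xi)$, invoking Plancherel to rewrite $\|\cdot\|_{1/2}$ as a weighted $L^2$ norm of $\hat{\phi}$, and then using the Schwartz decay of $\hat{\phi}$ to produce a uniform integrable majorant is a clean and correct fix, arguably the most natural one in the fractional setting. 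The alternative --- estimating the Gagliardo double integral directly by splitting near and off the diagonal --- also works but is more laborious. So: the proposal is correct, its overall architecture is the one the paper implicitly intends, and the one nontrivial fractional-specific step is handled soundly by a Fourier argument rather than a physical-space estimate.
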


\begin{lemma}   \label{f}
Assume that $(f_1)$-$(f_4)$ hold, let $\{u_n\}\subset S(a)$ be a bounded sequence in
$H^{1/2}(\mathbb{R})$,
  if $u_n\rightharpoonup u$ in $H^{1/2}(\mathbb{R})$ and
  \begin{equation*}
  \int_{\mathbb{R}}(I_\mu*F(u_n))f(u_n)u_n\mathrm{d}x\leq K_0
 \end{equation*}
 for some $K_0>0$, then for any $\phi\in C_0^\infty(\mathbb{R})$, we have
 \begin{align*}
 \int_{\mathbb{R}}(I_\mu*F(u_n))f(u_n)\phi\mathrm{d}x\to \int_{\mathbb{R}}(I_\mu*F(u))f(u)\phi\mathrm{d}x,\quad \mbox{as $n\rightarrow\infty$}.
 \end{align*}
\end{lemma}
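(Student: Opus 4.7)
The plan is to combine a.e.\ pointwise convergence of $u_n$ (from Rellich compactness in $H^{1/2}$) with a truncation argument that converts the a priori bound $\int_{\mathbb{R}}(I_\mu\ast F(u_n))f(u_n)u_n\,\mathrm{d}x\le K_0$ into equi-integrability on the compact set $K:=\mathrm{supp}(\phi)$; the conclusion then follows from Vitali's convergence theorem. As a first step I would extract a subsequence, still denoted $\{u_n\}$, such that $u_n\to u$ a.e.\ in $\mathbb{R}$ and $u_n\to u$ strongly in $L^q_{\mathrm{loc}}(\mathbb{R})$ for every $q\in[2,\infty)$, so that $f(u_n)\to f(u)$ and $F(u_n)\to F(u)$ a.e.\ by continuity. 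From $(f_3)$, $f(t)t\ge\theta F(t)>0$ for $t\neq 0$, and the hypothesis gives
\begin{equation*}
\int_{\mathbb{R}}(I_\mu\ast F(u_n))F(u_n)\,\mathrm{d}x\le \frac{K_0}{\theta}.
\end{equation*}

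Next I would control the Riesz convolution on $K$: applying the Cauchy--Schwarz type inequality \eqref{CS} with $g=F(u_n)$ and $h=\chi_K$, and using that $\iint_{K\times K}|x-y|^{-\mu}\,\mathrm{d}x\,\mathrm{d}y<\infty$ (since $\mu\in(0,1)$ and $K$ is bounded), yields the uniform bound
\begin{equation*}
\int_K (I_\mu\ast F(u_n))(x)\,\mathrm{d}x\le C.
\end{equation*}
Combined with the a.e.\ convergence $F(u_n)\to F(u)$, a Fatou/Vitali-type argument (passing to a further subsequence if necessary) gives $I_\mu\ast F(u_n)\to I_\mu\ast F(u)$ in $L^1(K)$ and a.e.\ on $K$. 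Now fix $L>t_0$. Using $(f_3)$ in the form $|f(u_n)|\le u_nf(u_n)/L$ on $\{|u_n|>L\}$, one estimates
\begin{equation*}
\int_{\{|u_n|>L\}}(I_\mu\ast F(u_n))|f(u_n)\phi|\,\mathrm{d}x\le \frac{\|\phi\|_\infty}{L}\int_{\mathbb{R}}(I_\mu\ast F(u_n))u_nf(u_n)\,\mathrm{d}x\le \frac{K_0\|\phi\|_\infty}{L},
\end{equation*}
and Fatou applied to $u_nf(u_n)$ delivers the analogous bound for the limit integrand. On $K\cap\{|u_n|\le L\}$ continuity of $f$ yields $|f(u_n)|\le C(L)$ uniformly, so by the preceding $L^1(K)$ estimate the truncated integrand $(I_\mu\ast F(u_n))f(u_n)\phi\,\chi_{\{|u_n|\le L\}}$ is equi-integrable on $K$; Vitali's theorem (using $\chi_{\{|u_n|\le L\}}\to\chi_{\{|u|\le L\}}$ a.e.\ for a.e.\ $L$) then delivers convergence of this piece to its natural limit.

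Letting $L\to\infty$ absorbs the tail contributions and the lemma follows. The main obstacle is that the exponential critical growth of $f$ precludes any routine $L^p$ control of $f(u_n)$, so one cannot apply dominated convergence directly; the crux is the use of $(f_3)$ to trade one factor of $f(u_n)$ for $u_nf(u_n)/L$ on $\{|u_n|>L\}$, which is exactly the mechanism that converts the hypothesis $\int(I_\mu\ast F(u_n))f(u_n)u_n\,\mathrm{d}x\le K_0$ into the Vitali-type equi-integrability required to pass to the limit.
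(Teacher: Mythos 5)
There is a genuine gap at the step where you assert that ``a Fatou/Vitali-type argument \dots gives $I_\mu\ast F(u_n)\to I_\mu\ast F(u)$ in $L^1(K)$ and a.e.\ on $K$.'' This does not follow from $F(u_n)\to F(u)$ a.e.\ together with the uniform bound $\int_{\mathbb{R}}(I_\mu\ast F(u_n))F(u_n)\,\mathrm{d}x\le K_0/\theta$. The convolution $(I_\mu\ast F(u_n))(x)=\int_{\mathbb{R}}|x-y|^{-\mu}F(u_n(y))\,\mathrm{d}y$ is a global average in $y$, and pointwise a.e.\ convergence of the integrand only gives the one-sided Fatou inequality $I_\mu\ast F(u)\le\liminf_n I_\mu\ast F(u_n)$ (using $F\ge 0$ from $(f_3)$), not equality. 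Your Cauchy--Schwarz estimate does yield $\int_E(I_\mu\ast F(u_n))\le C\bigl(\iint_{E\times E}|x-y|^{-\mu}\bigr)^{1/2}$, hence uniform integrability of $\{I_\mu\ast F(u_n)\}$ on $K$, but Vitali still requires almost-everywhere (or in-measure) convergence of the integrand $(I_\mu\ast F(u_n))f(u_n)\phi\chi_{\{|u_n|\le L\}}$, and that hinges on the unproven a.e.\ convergence of $I_\mu\ast F(u_n)$. Under exponential critical growth there is no routine $L^p$ or equi-integrability control of $F(u_n)$ that would let you pass to the limit inside the convolution.

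This missing step is exactly where the paper's proof does the extra work, and it is also where $(f_4)$ enters --- a hypothesis your argument never uses, which is a warning sign. The paper introduces a second truncation at a large level $L_\varepsilon$ \emph{inside} the convolution, writing $F(u_n)=F(u_n)\chi_{\{|u_n|\le L_\varepsilon\}}+F(u_n)\chi_{\{|u_n|> L_\varepsilon\}}$. For the high part, $(f_4)$ combined with $(f_3)$ gives $F(u_n)\chi_{\{|u_n|\ge L_\varepsilon\}}\le (M_0/L_\varepsilon)\,f(u_n)u_n\chi_{\{|u_n|\ge L_\varepsilon\}}$, so the corresponding piece of the bilinear form is $O(M_0K_0/L_\varepsilon)$, and Hardy--Littlewood--Sobolev plus Cauchy--Schwarz then make its contribution to $\int(I_\mu\ast F(u_n))|f(u_n)\phi|$ arbitrarily small. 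For the truncated part $d_n(x):=I_\mu\ast\bigl(F(u_n)\chi_{\{|u_n|\le L_\varepsilon\}}\bigr)$ the integrand is bounded, and the paper proves pointwise convergence and a uniform bound of $d_n$ by splitting the convolution integral into $|x-y|\le R$ (dominated convergence on a bounded set) and $|x-y|\ge R$ (controlled via the $L^p$ bounds on $u_n$ from $H^{1/2}(\mathbb{R})$). Only after that are the pieces recombined via dominated convergence. To repair your argument you would need to supply this convolution-convergence step --- essentially reproducing the $L_\varepsilon$-truncation and the use of $(f_4)$ --- since it is not a formality.
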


 \begin{proof}
 By Fatou Lemma, we have
 \begin{align*}
 \int_{\mathbb{R}}(I_\mu*F(u))f(u)u\mathrm{d}x\leq K_0.
  \end{align*}
 Denote $\Omega:=supp\phi$. For any given $\varepsilon>0$, let $M_\varepsilon:=\frac{K_0\|\phi\|_{\infty}}{\varepsilon}$, then by $(f_3)$, we obtain
 \begin{align}\label{27}
 \int_{\{|u_n|\geq M_\varepsilon\}\cup\{|u|=M_\varepsilon\}}(I_\mu*F(u_n))\big|f(u_n)\phi\big| \mathrm{d}x\leq\frac{2\varepsilon}{K_0}
 \int_{|u_n|\geq \frac{M_\varepsilon}{2}}(I_\mu*F(u_n))f(u_n)u_n\mathrm{d}x\leq 2\varepsilon
   \end{align}
 and
  \begin{align}\label{28}
 \int_{|u|\geq M_\varepsilon}(I_\mu*F(u)) \big|f(u)\phi\big| \mathrm{d}x\leq \frac{\varepsilon}{K_0}
 \int_{|u|\geq M_\varepsilon}(I_\mu*F(u))f(u)u\mathrm{d}x\leq \varepsilon.
  \end{align}
Let $G_\varepsilon:=\{x\in\Omega:|u(x)|=M_\varepsilon\}$, since $|f(u_n)|{\chi_{|u_n|\leq M_\varepsilon}}\to |f(u)|{\chi_{|u|\leq M_\varepsilon}}$ a.e. in $\Omega\backslash G_\varepsilon$, and for any $x\in\Omega$, we have
\begin{equation*}
|f(u_n)|{\chi_{|u_n|\leq M_\varepsilon}}\leq \max\limits_{|t|\leq M_\varepsilon}|f(t)|< \infty,
\end{equation*}
 using the Lebesgue dominated convergence theorem, we obtain
\begin{align}\label{22}
\lim_{n\to\infty}\int_{\{\Omega\backslash G_\varepsilon\}\cap\{|u_n|\leq M_\varepsilon\}}|f(u_n)|^{\frac{2}{2-\mu}}\mathrm{d}x
=\int_{\{\Omega\backslash G_\varepsilon\}\cap\{|u|\leq M_\varepsilon\}}|f(u)|^{\frac{2}{2-\mu}}\mathrm{d}x.
\end{align}
Choosing $L_\varepsilon>t_0$ such that
\begin{align}\label{23}
\|\phi\|_{\infty}(\frac{M_0K_0}{L_\varepsilon})^{\frac{1}{2}}\Big(\int_{\Omega}|f(u)|^{\frac{2}{2-\mu}}\mathrm{d}x\Big)^{\frac{2-\mu}{2}}<\varepsilon
\end{align}
 and
 \begin{align}\label{26}
 \int_{|u|\leq M_\varepsilon}(I_\mu*F(u){\chi_{|u|\geq L_\varepsilon}})|f(u)\phi|\mathrm{d}x<\varepsilon.
 \end{align}
Then from $(f_4)$, \eqref{HLS}, \eqref{CS}, $(\ref{22})$ and $(\ref{23})$, one has
\begin{align}\label{25}
& \int_{\{|u_n|\leq M_\varepsilon\}\cap\{|u|\neq M_\varepsilon\}}(I_\mu*F(u_n){\chi_{|u_n|\geq L_\varepsilon}})|f(u_n)\phi|\mathrm{d}x\nonumber\\
 \leq& \|\phi\|_{\infty} \int_{\Omega\backslash G_\varepsilon}(I_\mu*F(u_n){\chi_{|u_n|\geq L_\varepsilon}})|f(u_n)|{\chi_{|u_n|\leq M_\varepsilon}}\mathrm{d}x\nonumber\\
  \leq&|\phi\|_{\infty} \Big(\int_{\mathbb{R}}(I_\mu*F(u_n){\chi_{|u_n|\geq L_\varepsilon}})|F(u_n)|{\chi_{|u_n|\geq L_\varepsilon}}\mathrm{d}x\Big)^{\frac{1}{2}}\nonumber\\
&\times \Big(\int_{\mathbb{R}}(I_\mu*|f(u_n)|\chi_{\{\Omega\backslash G_\varepsilon\}\cap\{|u_n|\leq M_\varepsilon\}})|f(u_n)|{\chi_{\{\Omega\backslash G_\varepsilon\}  \cap\{|u_n|\leq M_\varepsilon\}}}  \mathrm{d}x\Big)^{\frac{1}{2}}\nonumber\\
\leq &C\|\phi\|_{\infty} \Big(\int_{|u_n|\geq L_\varepsilon}(I_\mu*F(u_n))F(u_n)\mathrm{d}x\Big)^{\frac{1}{2}}
\Big( \int_{\{\Omega\backslash G_\varepsilon\}\cap\{|u_n|\leq M_\varepsilon\}}|f(u_n)|^{\frac{2}{2-\mu}}\mathrm{d}x\Big)^{\frac{2-\mu}{2}}\nonumber\\
\leq &C\|\phi\|_{\infty} \Big(\frac{M_0}{L_\varepsilon}\int_{|u_n|\geq L_\varepsilon}(I_\mu*F(u_n))f(u_n)u_n\mathrm{d}x\Big)^{\frac{1}{2}}
\Big( \int_{\Omega}|f(u)|^{\frac{2}{2-\mu}}\mathrm{d}x\Big)^{\frac{2-\mu}{2}}\nonumber\\
\leq &C\|\phi\|_{\infty} \Big(\frac{K_0M_0}{L_\varepsilon}\Big)^{\frac{1}{2}}
\Big( \int_{\Omega}|f(u)|^{\frac{2}{2-\mu}}\mathrm{d}x\Big)^{\frac{2-\mu}{2}}< \varepsilon.
      \end{align}
For any $x\in\mathbb{R}$, we set
\begin{align*}
d_n(x):=\int_{\mathbb{R}}\frac{|F(u_n)|{\chi_{|u_n|\leq L_\varepsilon}}}{|x-y|^\mu}\mathrm{d}y\quad \mbox{and}\quad
d(x):=\int_{\mathbb{R}}\frac{|F(u)|{\chi_{|u|\leq L_\varepsilon}}}{|x-y|^\mu}\mathrm{d}y.
\end{align*}
According to $(\ref{Ft})$, for any $x\in \mathbb{R}$ and $R>0$, one has
\begin{align*}
&|d_n(x)-d(x)|\\
\leq& \int_{\mathbb{R}}\frac{\Big||F(u_n)|{\chi_{|u_n|\leq L_\varepsilon}}-|F(u)|{\chi_{|u|\leq L_\varepsilon}}\Big|}{|x-y|^\mu}\mathrm{d}y\\
\leq&\Big(\int_{|x-y|\leq R}
\Big||F(u_n)|{\chi_{|u_n|\leq L_\varepsilon}}-|F(u)|{\chi_{|u|\leq L_\varepsilon}}\Big|^{\frac{1+\mu}{1-\mu}}\mathrm{d}y\Big)^{\frac{1-\mu}{1+\mu}}
\Big(\int_{|x-y|\leq R}|x-y|^{-\frac{1+\mu}{2}}\mathrm{d}y\Big)^{\frac{2\mu}{1+\mu}}\\
&+\Big(\int_{|x-y|\geq R}
\Big||F(u_n)|{\chi_{|u_n|\leq L_\varepsilon}}-|F(u)|{\chi_{|u|\leq L_\varepsilon}}\Big|^{\frac{2-\mu}{2-2\mu}}\mathrm{d}y\Big)^{\frac{2-2\mu}{2-\mu}}
\Big(\int_{|x-y|\geq R}|x-y|^{\mu-2}\mathrm{d}y\Big)^{\frac{\mu}{2-\mu}}\\
\leq&\Big(\frac{4}{1-\mu}R^{\frac{1-\mu}{2}}\Big)^{\frac{2\mu}{1+\mu}}\Big(\int_{|x-y|\leq R}
\Big||F(u_n)|{\chi_{|u_n|\leq L_\varepsilon}}-|F(u)|{\chi_{|u|\leq L_\varepsilon}}\Big|^{\frac{1+\mu}{1-\mu}}\mathrm{d}y\Big)^{\frac{1-\mu}{1+\mu}}\\
&+\Big(\frac{2}{(1-\mu)R^{1-\mu}}\Big)^{\frac{\mu}{2-\mu}}
\Big(\int_{|x-y|\geq R}
\Big||F(u_n)|{\chi_{|u_n|\leq L_\varepsilon}}-|F(u)|{\chi_{|u|\leq L_\varepsilon}}\Big|^{\frac{2-\mu}{2-2\mu}}\mathrm{d}y\Big)^{\frac{2-2\mu}{2-\mu}}.
\end{align*}
 Similar to $\eqref{22}$, we can get
 \begin{align*}
 \int_{|x-y|\leq R}
\Big||F(u_n)|{\chi_{|u_n|\leq L_\varepsilon}}-|F(u)|{\chi_{|u|\leq L_\varepsilon}}\Big|^{\frac{1+\mu}{1-\mu}}\mathrm{d}y\to0,
\quad\mbox{as} \ n\to\infty.
\end{align*}
Moreover, it follows from $(\ref{Ft})$ that
 \begin{align*}
 &\Big(\int_{|x-y|\geq R}
\Big||F(u_n)|{\chi_{|u_n|\leq L_\varepsilon}}-|F(u)|{\chi_{|u|\leq L_\varepsilon}}\Big|^{\frac{2-\mu}{2-2\mu}}\mathrm{d}y\Big)^{\frac{2-2\mu}{2-\mu}}
\\&\leq (\|u_n\|_{\frac{(\kappa+1)(2-\mu)}{2-2\mu}}^{\kappa+1}+\|u_n\|_{\frac{q(2-\mu)}{2-2\mu}}^q+\|u\|_{\frac{(\kappa+1)(2-\mu)}{2-2\mu}}^{\kappa+1}+\|u\|_{\frac{q(2-\mu)}{2-2\mu}}^q\Big) \leq C.
 \end{align*}
Choosing $R>0$ large enough, then for any $x\in\mathbb{R}$, we obtain $d_n(x)\to d(x)$ as $n\rightarrow\infty$.
Using $(\ref{Ft})$ again, for any $x\in \mathbb{R}$, we also have
\begin{align*}
\begin{split}
d_n(x)\leq&\Big(\int_{|x-y|\leq R}
\Big||F(u_n)|{\chi_{|u_n|\leq L_\varepsilon}}\Big|^{\frac{1+\mu}{1-\mu}}\mathrm{d}y\Big)^{\frac{1-\mu}{1+\mu}}
\Big(\int_{|x-y|\leq R}|x-y|^{-\frac{1+\mu}{2}}\mathrm{d}y\Big)^{\frac{2\mu}{1+\mu}}\\
&+\Big(\int_{|x-y|\geq R}
\Big||F(u_n)|{\chi_{|u_n|\leq L_\varepsilon}}\Big|^{\frac{2-\mu}{2-2\mu}}\mathrm{d}y\Big)^{\frac{2-2\mu}{2-\mu}}
\Big(\int_{|x-y|\geq R}|x-y|^{\mu-2}\mathrm{d}y\Big)^{\frac{\mu}{2-\mu}}\\
\leq&\Big(\frac{4}{1-\mu}R^{\frac{1-\mu}{2}}\Big)^{\frac{2\mu}{1+\mu}}(2R)^{\frac{1-\mu}{1+\mu}}\max_{|t|\leq L_\varepsilon}|F(t)|\\
&+C\Big(\frac{2}{(1-\mu)R^{1-\mu}}\Big)^{\frac{\mu}{2-\mu}}
\Big(\|u_n\|_{\frac{(\kappa+1)(2-\mu)}{2-2\mu}}^{\kappa+1}+\|u_n\|_{\frac{q(2-\mu)}{2-2\mu}}^q\Big)
\leq C.
\end{split}
\end{align*}
Thus, we have
\begin{align*}
\big|d_n(x)f(u_n(x)){\chi_{|u_n|\leq M_\varepsilon}}\phi(x)\big|\leq C\|\phi\|_{\infty}\max_{|t|\leq M_\varepsilon}|f(t)|,
\end{align*}
 for any $x\in\Omega$. This inequality together with $d_n(x)\to d(x)$, and the Lebesgue dominated convergence theorem, yields that
 \begin{align}\label{24}
\lim_{n\to\infty}\int_{\{|u_n|\leq M_\varepsilon\}\cap\{|u|\neq M_\varepsilon\}}
(I_\mu*F(u_n){\chi_{|u_n|\leq L_\varepsilon}})|f(u_n)\phi|\mathrm{d}x
=\int_{|u|\leq M_\varepsilon}(I_\mu*F(u){\chi_{|u|\leq L_\varepsilon}})|f(u)\phi|\mathrm{d}x.
 \end{align}
 Combining $(\ref{27})$, $(\ref{28})$ and $(\ref{26})$-$(\ref{24})$, we complete the proof.
 \end{proof}

\section{{\bfseries The estimation for the upper bound of $m(a)$}}\label{minimax}

In this section, by using the condition $(f_6)$, we obtain an upper bound of $m(a)$.

\begin{lemma}\label{mountain}
Assume that $(f_1)$-$(f_3)$ hold. Let $u\in S(a)$ be arbitrary but fixed, then we have

(i)  $J(\mathcal{H}(u,\beta))\to0^+$ as $\beta\to -\infty$; 

(ii)  $J(\mathcal{H}(u,\beta))\to -\infty$ as $\beta\to +\infty$.
\end{lemma}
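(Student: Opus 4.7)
Both parts rest on the scaling identities $\|\mathcal{H}(u,\beta)\|_2=\|u\|_2=a$ and $\|(-\Delta)^{1/4}\mathcal{H}(u,\beta)\|_2^2=e^\beta\|(-\Delta)^{1/4}u\|_2^2$; changing variables $y\mapsto e^\beta y$ in the Choquard double integral likewise gives
\begin{equation*}
J(\mathcal{H}(u,\beta))=\frac{e^\beta}{2}\|(-\Delta)^{1/4}u\|_2^2-\frac{e^{(\mu-2)\beta}}{2}\int_{\mathbb{R}}\int_{\mathbb{R}}\frac{F(e^{\beta/2}u(x))F(e^{\beta/2}u(y))}{|x-y|^\mu}\,\mathrm{d}x\mathrm{d}y,
\end{equation*}
so everything reduces to comparing powers of $e^\beta$.

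For (i) I would show the Choquard term is $o(e^\beta)$ as $\beta\to-\infty$. By Proposition \ref{hardy} it is at most $C\|F(\mathcal{H}(u,\beta))\|_{p_0}^2$ with $p_0=2/(2-\mu)$; applying the pointwise bound \eqref{Ft} with $\alpha>\pi$ and $q>3-\mu$, the polynomial contribution $\|\mathcal{H}(u,\beta)\|_{(\kappa+1)p_0}^{2(\kappa+1)}$ is at most $Ce^{(\kappa+\mu-1)\beta}$ by Lemma \ref{GN}, and $\kappa+\mu-1>1$ since $\kappa>2-\mu$. The exponential contribution $\int|\mathcal{H}(u,\beta)|^{qp_0}(e^{\alpha\mathcal{H}(u,\beta)^2}-1)^{p_0}\,\mathrm{d}x$ I split via H\"older with exponent $s$ close to $1$ and conjugate $s'$, using $(e^x-1)^{p_0s'}\le e^{p_0 s' x}-1$. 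Since $\|(-\Delta)^{1/4}\mathcal{H}(u,\beta)\|_2^2=e^\beta\|(-\Delta)^{1/4}u\|_2^2\to 0$, the effective Moser constant $p_0 s'\alpha\cdot e^\beta\|(-\Delta)^{1/4}u\|_2^2$ lies below $\pi$ for $\beta$ sufficiently negative, so a rescaled application of Lemma \ref{tm} bounds the exponential factor uniformly, while Lemma \ref{GN} converts the remaining $L^{qp_0 s}$ norm into a rate $e^{(q+\mu-2)\beta}$ with exponent exceeding $1$ once $q>3-\mu$. Combining, $J(\mathcal{H}(u,\beta))/e^\beta\to \tfrac12\|(-\Delta)^{1/4}u\|_2^2>0$, hence $J(\mathcal{H}(u,\beta))\to 0^+$.

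For (ii) the monotonicity consequence of $(f_3)$ is decisive: differentiating $F(t)/|t|^\theta$ on each half-line and using $\theta F(t)\le tf(t)$ shows this ratio is non-decreasing in $|t|$, whence $F(e^{\beta/2}u(y))\ge e^{\beta\theta/2}F(u(y))$ for $\beta>0$ and $u(y)\neq 0$. Substituting into the identity above yields
\begin{equation*}
\int_{\mathbb{R}}\int_{\mathbb{R}}\frac{F(e^{\beta/2}u(x))F(e^{\beta/2}u(y))}{|x-y|^\mu}\,\mathrm{d}x\mathrm{d}y\ge e^{\beta\theta}\int_{\mathbb{R}}(I_\mu*F(u))F(u)\,\mathrm{d}x,
\end{equation*}
and because $(f_3)$ also forces $F(t)>0$ for $t\neq 0$ while $u\in S(a)$ is nontrivial, the right-hand integral is strictly positive. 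The Choquard term in $J(\mathcal{H}(u,\beta))$ is therefore at least of order $e^{(\theta+\mu-2)\beta}$; since $\theta>3-\mu$ gives $\theta+\mu-2>1$, this negative contribution dominates the positive kinetic term $\tfrac{e^\beta}{2}\|(-\Delta)^{1/4}u\|_2^2$, so $J(\mathcal{H}(u,\beta))\to-\infty$.

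The chief difficulty lies in the exponential part of (i): since $(f_2)$ only bounds $|f|$ in terms of $e^{\alpha t^2}$ for $\alpha>\pi$, Lemma \ref{tm} is not directly applicable even when $\|(-\Delta)^{1/4}v\|_2\le 1$, and the argument relies crucially on the small-data regime created by the vanishing of $\|(-\Delta)^{1/4}\mathcal{H}(u,\beta)\|_2$ as $\beta\to-\infty$, which allows a rescaling that brings the effective Moser exponent below $\pi$ uniformly for sufficiently negative $\beta$.
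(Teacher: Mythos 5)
Your proposal is correct and follows essentially the same route as the paper: in (i) the Hardy--Littlewood--Sobolev inequality reduces matters to $\|F(\mathcal{H}(u,\beta))\|_{2/(2-\mu)}$, the pointwise bound \eqref{Ft} splits this into a polynomial and an exponential piece, a H\"older split combined with the rescaled Trudinger--Moser inequality \eqref{att} (valid since $\|(-\Delta)^{1/4}\mathcal{H}(u,\beta)\|_2\to 0$) handles the exponential piece, and scaling identities give the rates; in (ii) both arguments integrate the Ambrosetti--Rabinowitz condition $(f_3)$ to obtain $\int(I_\mu*F(e^{\beta/2}u))F(e^{\beta/2}u)\,\mathrm{d}x\geq e^{\theta\beta}\int(I_\mu*F(u))F(u)\,\mathrm{d}x$, and the exponent $\theta+\mu-2>1$ forces divergence. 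The only cosmetic differences are that you derive the pointwise monotonicity of $F(t)/|t|^\theta$ rather than differentiating the integral quantity $\mathcal{W}(t)$ as the paper does, and you fix the free parameter $q>3-\mu$ instead of the paper's choice $q>1$ together with $\nu'$ large; both bookkeepings yield the same conclusion.
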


\begin{proof}
$(i)$ By a straightforward calculation, we have
\begin{align*}
\int_{\mathbb{R}}|\mathcal{H}(u,\beta) |^2 \mathrm{d}x=a^2,\ \ \int_{\mathbb{R}}|\mathcal{H}(u,\beta) |^\xi \mathrm{d}x=e^{\frac{(\xi-2)\beta}{2}}\int_{\mathbb{R}}|u|^\xi\mathrm{d}x, \ \ \forall \ \xi>2,
\end{align*}
and
\begin{align*}
\int_{\mathbb{R}}\int_{\mathbb{R}}\frac{|\mathcal{H}(u,\beta)(x)-\mathcal{H}(u,\beta)(y)|^2}{|x-y|^2}
\mathrm{d}x\mathrm{d}y
=e^{\beta}\int_{\mathbb{R}}\int_{\mathbb{R}}\frac{| u(x)-u(y)|^2}{|x-y|^2}
\mathrm{d}x\mathrm{d}y.
\end{align*}
Thus there exist $\beta_1<<0$  such that $\|(-\Delta)^{1/4}\mathcal{H}(u,\beta)  \|_{2}^2<  \frac{2-\mu}{2}
$ for any $\beta<\beta_1$. Fix $\alpha>\pi$ close to $\pi$ and $\nu>1$ close to $1$ such that
\begin{equation*}
  \frac{2\alpha \nu}{2-\mu}\|(-\Delta)^{1/4}\mathcal{H}(u,\beta)  \|_{2}^2<\pi,\quad \text{for any $\beta<\beta_1$}.
\end{equation*}
Then, for $\frac{1}{\nu}+\frac{1}{\nu'}=1$, using  \eqref{att}, $(\ref{Ft})$, the H\"{o}lder and Sobolev inequality, we have
\begin{align}\label{tain1}
&\|F(\mathcal{H}(u,\beta))\|_{\frac{2}{2-\mu}} \leq \|f(\mathcal{H}(u,\beta))\mathcal{H}(u,\beta)\|_{\frac{2}{2-\mu}}\nonumber\\
\leq&
\zeta \|\mathcal{H}(u,\beta) \|_{\frac{2(\kappa+1)}{2-\mu}}^{\kappa+1}+C \Big[\int_{\mathbb{R}} \big[(e^{\alpha |\mathcal{H}(u,\beta)|^2}-1) |\mathcal{H}(u,\beta)|^{q}\big]^{\frac{2}{2-\mu}} \mathrm{d}x\Big]^{\frac{2-\mu}{2}}\nonumber\\
\leq&\zeta \|\mathcal{H}(u,\beta) \|_{\frac{2(\kappa+1)}{2-\mu}}^{\kappa+1}+C
\Big[\int_{\mathbb{R}}(e^{\frac{2 \alpha \nu}{2-\mu}|\mathcal{H}(u,\beta)|^2}-1)\mathrm{d}x\Big]^{\frac{2-\mu}{2\nu}}\| \mathcal{H}(u,\beta) \|_{\frac{2q\nu'}{2-\mu}}^q\nonumber\\
=&\zeta \|\mathcal{H}(u,\beta) \|_{\frac{2(\kappa+1)}{2-\mu}}^{\kappa+1}+C
\Big[\int_{\mathbb{R}}(e^{\frac{2 \alpha \nu}{2-\mu}\|(-\Delta)^{1/4}\mathcal{H}(u,\beta)\|_{2}^2 \big(\frac{|\mathcal{H}(u,\beta)|}{\|(-\Delta)^{1/4}\mathcal{H}(u,\beta)\|_{2}}\big)^2}-1)\mathrm{d}x\Big]^{\frac{2-\mu}{2\nu}}\| \mathcal{H}(u,\beta) \|_{\frac{2q\nu'}{2-\mu}}^q\nonumber\\
\leq& \zeta \|\mathcal{H}(u,\beta) \|_{\frac{2(\kappa+1)}{2-\mu}}^{\kappa+1}+C\| \mathcal{H}(u,\beta) \|_{{\frac{2q\nu'}{2-\mu}}}^q\nonumber\\
=&\zeta e^{\frac{(\kappa+\mu-1)\beta}{2}}
\| u \|_{\frac{2(\kappa+1)}{2-\mu}}^{\kappa+1}+Ce^{\frac{(q\nu'+\mu-2)\beta}{2\nu'}}
\| u\|_{{\frac{2q\nu'}{2-\mu}}}^q.
\end{align}
Hence, it follows from \eqref{HLS} that
\begin{align*}
\int_{\mathbb{R}}\big(I_\mu*F(\mathcal{H}(u,\beta) )\big)F(\mathcal{H}(u,\beta) )\mathrm{d}x& \leq C \|\mathcal{H}(u,\beta) \|_{\frac{2(\kappa+1)}{2-\mu}}^{2(\kappa+1)}+C\| \mathcal{H}(u,\beta) \|_{{\frac{2q\nu'}{2-\mu}}}^{2q}\\
&\leq C e^{(\kappa+\mu-1)\beta}
\|u \|_{{\frac{2(\kappa+1)}{2-\mu}}}^{2(\kappa+1)}+Ce^{\frac{(q\nu'+\mu-2)\beta}{\nu'}}
\| u \|_{{\frac{2q\nu'}{2-\mu}}}^{2q}.
\end{align*}
Since $\kappa>2-\mu$, $q>1$ and $\nu'$ large enough, it follows that
\begin{align*}
 J(\mathcal{H}(u,\beta))\geq \frac{1}{2}e^\beta \|(-\Delta)^{1/4}u\|_2^2- C e^{(\kappa+\mu-1)\beta}
\|u \|_{{\frac{2(\kappa+1)}{2-\mu}}}^{2(\kappa+1)}-Ce^{\frac{(q\nu'+\mu-2)\beta}{\nu'}}
\| u \|_{{{\frac{2q\nu'}{2-\mu}}}}^{2q}  \to0^+,\ \ \mbox{as} \ \beta\to-\infty.
\end{align*}

$(ii)$
For any fixed $\beta>>0$,  set
\begin{align*}
\mathcal{W}(t):=\frac{1}{2}\int_{\mathbb{R}}(I_\mu*F(tu))F(tu)\mathrm{d}x\quad \text{for $t>0$}.
\end{align*}
Using $(f_3)$, one has
\begin{align*}
\frac{\frac{\mathrm{d}\mathcal{W}(t)}{\mathrm{d}t}}{\mathcal{W}(t)}>\frac{2\theta}{t}\quad \text{for $t>0$}.
\end{align*}
Thus, integrating this over $[1,e^{\frac{\beta}{2}}]$, we get
\begin{align}\label{fff}
\int_{\mathbb{R}}(I_\mu*F(e^{\frac{\beta}{2}}u))F(e^{\frac{\beta}{2}}u)\mathrm{d}x\geq e^{\theta\beta}\int_{\mathbb{R}}(I_\mu*F(u))F(u)\mathrm{d}x.
\end{align}
Hence,
\begin{align*}
J(\mathcal{H}(u,\beta))\leq \frac{1}{2}e^\beta \|(-\Delta)^{1/4}u\|_2^2- \frac{1}{2} e^{(\theta+\mu-2)\beta}\int_{\mathbb{R}}(I_\mu*F(u))F(u)\mathrm{d}x.
\end{align*}
Since $\theta>3-\mu$, the above inequality yields that $J(\mathcal{H}(u,\beta))\to -\infty$ as $\beta\to+\infty$.
\end{proof}

\begin{lemma}\label{pa}
Assume that $(f_1)$-$(f_3)$ and $(f_5)$ hold. Then for any fixed $u\in S(a)$, the function $I_u(\beta):=J(\mathcal{H}(u,\beta))$ reaches its unique maximum with positive level at a unique point $\beta_u\in\mathbb{R}$ such that $\mathcal{H}(u,\beta_u)\in\mathcal{P}(a)$. Moreover, the mapping $u\rightarrow \beta_u$ is continuous in $u\in S(a)$.
\end{lemma}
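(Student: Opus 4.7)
The plan is to study the real-valued function $I_u(\beta) := J(\mathcal{H}(u,\beta))$ on $\mathbb{R}$, showing it attains a unique global maximum with positive value at some $\beta_u$ that characterizes $\mathcal{P}(a)$, and then to derive the continuity of $u \mapsto \beta_u$ by a subsequence/uniqueness argument.

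For existence of a maximizer, I observe that $\beta \mapsto \mathcal{H}(u,\beta)$ is continuous into $H^{1/2}(\mathbb{R})$ by Proposition \ref{m}, so $I_u$ is continuous in $\beta$. Since Lemma \ref{mountain} gives $I_u(\beta) \to 0^+$ as $\beta \to -\infty$ and $I_u(\beta) \to -\infty$ as $\beta \to +\infty$, $I_u$ attains its global maximum at some $\beta_u \in \mathbb{R}$ with positive level (because $I_u$ is strictly positive for $\beta$ sufficiently negative). A direct calculation, after using the change of variable $x \mapsto e^{-\beta}x$ to write $I_u(\beta) = \tfrac{1}{2}e^\beta \|(-\Delta)^{1/4}u\|_2^2 - \tfrac{1}{2}e^{(\mu-2)\beta}\int(I_\mu*F(w_\beta))F(w_\beta)\,\mathrm{d}x$ with $w_\beta := e^{\beta/2}u$, and differentiating under the integral sign (legal by $(f_1)$-$(f_2)$), yields the clean identity $I_u'(\beta) = \tfrac{1}{2}P(\mathcal{H}(u,\beta))$. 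Consequently $I_u'(\beta_u)=0$ is equivalent to $\mathcal{H}(u,\beta_u) \in \mathcal{P}(a)$.

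The heart of the argument is the uniqueness of $\beta_u$. Grouping the $(2-\mu)F$ and $f(w)w$ pieces, I would rewrite
\begin{align*}
I_u'(\beta) = \tfrac{1}{2}e^\beta\Big[\|(-\Delta)^{1/4}u\|_2^2 - h(\beta)\Big],
\end{align*}
where
\begin{align*}
h(\beta) = \int_{\mathbb{R}}\int_{\mathbb{R}} \frac{|u(x)|^{3-\mu}|u(y)|^{3-\mu}}{|x-y|^\mu} \cdot \frac{\widetilde{F}(e^{\beta/2}u(x))}{|e^{\beta/2}u(x)|^{3-\mu}} \cdot \frac{F(e^{\beta/2}u(y))}{|e^{\beta/2}u(y)|^{3-\mu}} \,\mathrm{d}x\mathrm{d}y.
\end{align*}
From $(f_3)$ one has $F(t),\widetilde{F}(t) > 0$ for $t \neq 0$, and a direct differentiation together with the strict inequality $\theta > 3-\mu$ forces $F(t)/|t|^{3-\mu}$ to be strictly increasing in $|t|$; meanwhile $(f_5)$ makes $\widetilde{F}(t)/|t|^{3-\mu}$ non-decreasing in $|t|$. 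Since $|e^{\beta/2}u(x)|$ strictly increases in $\beta$ wherever $u(x)\neq 0$ (a set of positive measure), $h$ is strictly increasing, and so $I_u'(\beta)=0$ has exactly one solution, namely $\beta_u$.

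For continuity, take $u_n \to u$ in $S(a)$. If $\beta_{u_n}\to +\infty$ along a subsequence, the upper estimate derived in Lemma \ref{mountain}(ii) together with $\theta+\mu-2>1$ forces $I_{u_n}(\beta_{u_n}) \to -\infty$; if $\beta_{u_n}\to -\infty$, the trivial bound $I_{u_n}(\beta) \leq \tfrac{1}{2}e^\beta\|(-\Delta)^{1/4}u_n\|_2^2$ gives $I_{u_n}(\beta_{u_n}) \to 0$. Both contradict $I_{u_n}(\beta_{u_n}) \geq I_{u_n}(\beta_u) \to I_u(\beta_u) > 0$, where the last limit uses Proposition \ref{m}. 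Hence $\{\beta_{u_n}\}$ is bounded, and any limit point $\beta^*$ satisfies $J(\mathcal{H}(u,\beta^*)) \geq J(\mathcal{H}(u,\beta))$ for every $\beta$ by passing to the limit in $I_{u_n}(\beta_{u_n}) \geq I_{u_n}(\beta)$; uniqueness then forces $\beta^* = \beta_u$, so $\beta_{u_n}\to\beta_u$. I expect the main obstacle to be the uniqueness step: the correct factorization of $I_u'$ must be discovered in order to simultaneously exploit the weak monotonicity of $\widetilde F(t)/|t|^{3-\mu}$ supplied by $(f_5)$ and the strict inequality $\theta>3-\mu$ in $(f_3)$, which is precisely what upgrades monotonicity of the integrand to strict monotonicity of $h$.
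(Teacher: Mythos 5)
Your proof is correct and follows essentially the same route as the paper: existence of a maximizer from Lemma \ref{mountain}, the factorization $I_u'(\beta)=\frac{1}{2}e^{\beta}\big(\|(-\Delta)^{1/4}u\|_2^2-\Phi(\beta)\big)$ with $\Phi$ expressed through the Riesz potential of $F(e^{\beta/2}u)/(e^{\beta/2})^{3-\mu}$ paired with $\widetilde F(e^{\beta/2}u)/(e^{\beta/2})^{3-\mu}$, strict monotonicity of $\Phi$ from $(f_3)$ (which makes the $F$-factor strictly increasing in $\beta$) together with $(f_5)$ (which keeps the $\widetilde F$-factor non-decreasing), and boundedness of $\{\beta_{u_n}\}$ followed by a subsequence/uniqueness argument for continuity. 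The only cosmetic difference is that in the last step you pass to the limit in the maximality inequality $I_{u_n}(\beta_{u_n})\ge I_{u_n}(\beta)$, whereas the paper passes to the limit in $P(\mathcal{H}(u_n,\beta_{u_n}))=0$; both close the argument via uniqueness.
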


\begin{proof}
From Lemma $\ref{mountain}$, there exists $\beta_u\in\mathbb{R}$ such that $P(\mathcal{H}(u,\beta_u))=\frac{\mathrm{d}}{\mathrm{d}\beta}I_u(\beta)\Big|_{\beta=\beta_u}=0$ and $J(\mathcal{H}(u,\beta_u))>0$. Next, we prove the uniqueness of $\beta_u$.
For $u\in S(a)$ and $\beta\in\mathbb{R}$, we know
\begin{align*}
I_u(\beta)=\frac{1}{2}e^\beta \|(-\Delta)^{1/4}u\|_{2}^2-\frac{1}{2} e^{(\mu-2)\beta} \int_{\mathbb{R}}  (I_\mu*F(e^{\frac{\beta}{2}}u))F(e^{\frac{\beta}{2}} u)\mathrm{d}x,
\end{align*}
and
\begin{align*}
P(\mathcal{H}(u,\beta))=\frac{\mathrm{d}}{\mathrm{d}\beta}I_u(\beta)
=&e^\beta\Big(
\frac{1}{2} \|(-\Delta)^{1/4}u\|_2^2+\frac{2-\mu}{2} e^{(\mu-3)\beta} \int_{\mathbb{R}}  (I_\mu*F(e^{\frac{\beta}{2}}u))F(e^{\frac{\beta}{2}} u)\mathrm{d}x\\
&\ \ \ - \frac{1}{2}e^{(\mu-3)\beta} \int_{\mathbb{R}}  (I_\mu*F(e^{\frac{\beta}{2}}u))f(e^{\frac{\beta}{2}} u)e^{\frac{\beta}{2}} u\mathrm{d}x\Big)\\
=&\frac{1}{2}e^\beta\Big( \|(-\Delta)^{1/4}u\|_{2}^2-\Phi(\beta)
\Big),
\end{align*}
where
\begin{align*}
\Phi(\beta)=\int_{\mathbb{R}} \Big(I_\mu*\frac{F(e^{\frac{\beta}{2}}u)}{(e^{\frac{\beta}{2}})^{3-\mu}} \Big)
\frac{\widetilde F(e^{\frac{\beta}{2}}u)}{(e^{\frac{\beta}{2}})^{3-\mu}}\mathrm{d}x.
\end{align*}
For any $t\in\mathbb{R} \backslash \{0\}$, using $(f_3)$ and $(f_5)$, we see that $\frac{F(\beta t)}{\beta^{3-\mu}}$ is strictly increasing in $\beta \in (0,+\infty)$ and $\frac{\widetilde F(\beta t)}{\beta^{3-\mu}}$ is non-decreasing in $\beta\in(0,+\infty)$. This implies that $\Phi(\beta)$ is strictly increasing in $\beta\in(0,+\infty)$ and there is at most one $\beta_u\in\mathbb{R}$ such that $\mathcal{H}(u,\beta_u)\in\mathcal{P}(a)$.

 From the above arguments, the mapping $u\rightarrow \beta_u$ is well defined. Let $\{u_n\}\subset S(a)$ be a sequence such that $u_n\to u$ in $H^{1/2}(\mathbb{R})$ as $n\to \infty$. We only need to prove that, up to a subsequence, $\beta_{u_n}\to \beta_u$ in $\mathbb{R}$ as $n\to \infty$.

On the one hand, if up to a subsequence, $\beta_{u_n}\to+\infty$ as $n\to\infty$, then by \eqref{fff} and $u_n\to u\neq 0$ in $H^{1/2}(\mathbb{R})$ as $n\to \infty$, we have
\begin{align*}
0\leq\lim_{n\to\infty}e^{-\beta_{u_n}}J(\mathcal{H}(u_n,\beta_{u_n}))
&\leq\lim_{n\to\infty} \frac{1}{2}\Big[ \|(-\Delta)^{1/4}u_n\|_{2}^2
-e^{(\theta+\mu-3)\beta_{u_n}} \int_{\mathbb{R}}  (I_\mu*F(u_n))F(u_n)\mathrm{d}x\Big]\\
&=-\infty,
\end{align*}
which is a contradiction. Hence, $\{\beta_{u_n}\}$ is bounded from above.

On the other hand, by Proposition $\ref{m}$,
we know $\mathcal{H}(u_n,\beta_u)\to \mathcal{H}(u,\beta_u)$ in $H^{1/2}(\mathbb{R})$ as $n\to\infty$. Then
\begin{align*}
J(\mathcal{H}(u_n,\beta_{u_n}))\geq J(\mathcal{H}(u_n,\beta_u))=J(\mathcal{H}(u,\beta_u))+o_n(1),
\end{align*}
and thus
\begin{align*}
\liminf_{n\to\infty}J(\mathcal{H}(u_n,\beta_{u_n}))\geq J(\mathcal{H}(u,\beta_u))>0.
\end{align*}
If up to a subsequence, $\beta_{u_n}\to -\infty$ as $n\to\infty$, using $(f_3)$, we get
\begin{align*}
J(\mathcal{H}(u_n,\beta_{u_n}))\leq \frac{e^{\beta_{u_n}}}{2}\|(-\Delta)^{1/4}u_n\|_2^2\to0, \quad \text{as $n\rightarrow\infty$},
\end{align*}
which is impossible. So we get $\{\beta_{u_n}\}$ is bounded from below. Up to a subsequence, we assume that $\beta_{u_n}\to \beta_0$ as $n\to\infty$. Since $u_n\to u$ in $H^{1/2}(\mathbb{R})$, then $\mathcal{H}(u_n,\beta_{u_n})\to\mathcal{H}(u,\beta_0)$ in $H^{1/2}(\mathbb{R})$ as $n\to\infty$. Moreover, by $P(\mathcal{H}(u_n,\beta_{u_n}))=0$, it follows that $P(\mathcal{H}(u,\beta_0))=0$. By the uniqueness of $\beta_u$, we get $\beta_u=\beta_0$ and the conclusion follows.
\end{proof}

\begin{lemma}\label{ous11}
Assume that $(f_1)$-$(f_3)$ hold, then there exists $\gamma>0$ small enough such that
\begin{align*}
J(u)\geq \frac{1}{4}\|(-\Delta)^{1/4}u\|_2^2
\quad \mbox{and}\quad
P(u)\geq\frac{1}{2}\|(-\Delta)^{1/4}u\|_2^2
\end{align*}
for $u\in S(a)$ satisfying $\|(-\Delta)^{1/4}u\|_2\leq \gamma$.
\end{lemma}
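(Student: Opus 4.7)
The plan is to show that when $\|(-\Delta)^{1/4}u\|_2$ is small, the nonlocal nonlinear term $\int_{\mathbb{R}}(I_\mu\ast F(u))F(u)\,\mathrm{d}x$ (and its analog with $f(u)u$) is controlled by a power of $\|(-\Delta)^{1/4}u\|_2$ strictly larger than $2$, so that both the kinetic term in $J$ and in $P$ dominate.

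First, combining $(f_1)$, $(f_2)$ and $(f_3)$, I obtain pointwise estimates of the form
\begin{equation*}
|F(t)|+|f(t)t|\leq \zeta |t|^{\kappa+1}+C|t|^q(e^{\alpha t^2}-1),\qquad t\in\mathbb{R},
\end{equation*}
valid for any $\zeta>0$, $q>1$ and $\alpha>\pi$. Applying the Hardy-Littlewood-Sobolev inequality \eqref{HLS} with $r=t=\frac{2}{2-\mu}$ gives
\begin{equation*}
\int_{\mathbb{R}}(I_\mu\ast F(u))F(u)\,\mathrm{d}x\leq C\|F(u)\|_{\frac{2}{2-\mu}}^{2},\qquad \int_{\mathbb{R}}(I_\mu\ast F(u))f(u)u\,\mathrm{d}x\leq C\|F(u)\|_{\frac{2}{2-\mu}}\|f(u)u\|_{\frac{2}{2-\mu}}.
\end{equation*}

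Next, exactly as in the computation \eqref{tain1} inside the proof of Lemma \ref{mountain}, I fix $\alpha>\pi$ close to $\pi$ and $\nu>1$ close to $1$ with $\frac{2\alpha\nu}{2-\mu}\gamma^{2}<\pi$, use H\"{o}lder with exponents $\nu,\nu'$, and apply the Adachi-Tanaka-type inequality \eqref{att} to the exponential factor (after writing $\frac{2\alpha\nu}{2-\mu}|u|^{2}=\frac{2\alpha\nu}{2-\mu}\|(-\Delta)^{1/4}u\|_2^{2}(|u|/\|(-\Delta)^{1/4}u\|_2)^{2}$). The fractional Gagliardo-Nirenberg inequality \eqref{gns} with $N=1$, $s=1/2$ gives $\|u\|_p^{p}\leq C(p)\,\|(-\Delta)^{1/4}u\|_2^{p-2}\|u\|_2^{2}=C(p)a^{2}\|(-\Delta)^{1/4}u\|_2^{p-2}$. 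Plugging in $p=\tfrac{2(\kappa+1)}{2-\mu}$ and $p=\tfrac{2q\nu'}{2-\mu}$ respectively, I obtain
\begin{equation*}
\|F(u)\|_{\frac{2}{2-\mu}}+\|f(u)u\|_{\frac{2}{2-\mu}}\leq C_{1}\|(-\Delta)^{1/4}u\|_2^{\kappa+\mu-1}+C_{2}\|(-\Delta)^{1/4}u\|_2^{\frac{q\nu'+\mu-2}{\nu'}},
\end{equation*}
provided $\|(-\Delta)^{1/4}u\|_2\leq\gamma$. Since $\kappa>2-\mu$ forces $\kappa+\mu-1>1$, and since choosing $\nu'$ large makes $\frac{q\nu'+\mu-2}{\nu'}$ arbitrarily close to $q>1$, both exponents are strictly greater than $1$. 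Squaring yields
\begin{equation*}
\int_{\mathbb{R}}(I_\mu\ast F(u))F(u)\,\mathrm{d}x+\int_{\mathbb{R}}(I_\mu\ast F(u))f(u)u\,\mathrm{d}x\leq C\bigl(\|(-\Delta)^{1/4}u\|_2^{2(\kappa+\mu-1)}+\|(-\Delta)^{1/4}u\|_2^{\tau}\bigr),
\end{equation*}
for some $\tau>2$.

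Finally, I insert this into the defining expressions. For $J(u)=\tfrac{1}{2}\|(-\Delta)^{1/4}u\|_2^{2}-\tfrac{1}{2}\int(I_\mu\ast F(u))F(u)\mathrm{d}x$, shrinking $\gamma$ so that the subtracted term is at most $\tfrac{1}{2}\|(-\Delta)^{1/4}u\|_2^{2}\cdot\tfrac{1}{2}$ yields $J(u)\geq\tfrac{1}{4}\|(-\Delta)^{1/4}u\|_2^{2}$. For $P(u)$, I simply drop the nonnegative term $(2-\mu)\int(I_\mu\ast F(u))F(u)\mathrm{d}x$ to get $P(u)\geq\|(-\Delta)^{1/4}u\|_2^{2}-\int(I_\mu\ast F(u))f(u)u\,\mathrm{d}x$, and again choose $\gamma$ small enough that the last integral is $\leq\tfrac{1}{2}\|(-\Delta)^{1/4}u\|_2^{2}$. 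Taking the smaller of the two $\gamma$'s gives the lemma.

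The only subtlety is ensuring the Trudinger-Moser threshold is respected uniformly on $\{u\in S(a):\|(-\Delta)^{1/4}u\|_2\leq\gamma\}$; this is precisely why one must fix $\alpha$ and $\nu$ \emph{before} choosing $\gamma$, and why the constraint $\frac{2\alpha\nu}{2-\mu}\gamma^{2}<\pi$ is the main quantitative requirement. Everything else is an $L^{p}$ interpolation / H\"older argument as in Lemma \ref{mountain}.
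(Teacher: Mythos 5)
Your proof is correct and follows essentially the same route as the paper's: apply Hardy--Littlewood--Sobolev with exponent $\frac{2}{2-\mu}$, bound $\|F(u)\|_{2/(2-\mu)}$ and $\|f(u)u\|_{2/(2-\mu)}$ via H\"older and the Adachi--Tanaka inequality exactly as in \eqref{tain1}, then use the fractional Gagliardo--Nirenberg inequality to extract a power of $\|(-\Delta)^{1/4}u\|_2$ strictly larger than $2$, and finally shrink $\gamma$. The only cosmetic differences are that you make explicit the step of dropping the nonnegative term $(2-\mu)\int(I_\mu*F(u))F(u)\,\mathrm{d}x$ in $P(u)$, and you state the Trudinger--Moser threshold constraint directly as $\frac{2\alpha\nu}{2-\mu}\gamma^2<\pi$, which is in fact a slightly cleaner formulation of what the paper intends.
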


\begin{proof}
If $\gamma<\sqrt{\frac{2-\mu}{2}}$, then $\|(-\Delta)^{1/4}u\|_2^2\leq \frac{2-\mu}{2}$. Fix $\alpha>\pi$ close to $\pi$ and $\nu>1$ close to $1$ such that
\begin{align*}
\frac{2\alpha\nu}{2-\mu}\|(-\Delta)^{1/4}u\|_2^2< \pi.
\end{align*}
From \eqref{gns} and $(\ref{tain1})$, we obtain
\begin{align*}
\int_{\mathbb{R}}\big(I_\mu*F(u)\big)F(u)\mathrm{d}x \leq& C \|u \|_{\frac{2(\kappa+1)}{2-\mu}}^{2(\kappa+1)}+C\| u \|_{{\frac{2q\nu'}{2-\mu}}}^{2q}\\
\leq& C a^{2(2-\mu)} \|(-\Delta)^{1/4}u\|_2^{2(\kappa+\mu-1)} +C a^{\frac{2(2-\mu)}{\nu'}} \|(-\Delta)^{1/4}u\|_2^{\frac{2(q\nu'+\mu-2)}{\nu'}}\\
\leq& C (a^{2(2-\mu)}\gamma^{2(\kappa+\mu-2)}+a^{\frac{2(2-\mu)}{\nu'}}\gamma^{2q-2+\frac{2(\mu-2)}{\nu'}}
)\|(-\Delta)^{1/4}u\|_2^2.
\end{align*}
Similarly, we can get
\begin{align*}
\int_{\mathbb{R}}(I_\mu*F(u))f(u)u\mathrm{d}x \leq C (a^{2(2-\mu)}\gamma^{2(\kappa+\mu-2)}+a^{\frac{2(2-\mu)}{\nu'}}\gamma^{2q-2+\frac{2(\mu-2)}{\nu'}}
)\|(-\Delta)^{1/4}u\|_2^2.
\end{align*}
Since $\kappa>2-\mu$, $q>1$ and $\nu'=\frac{\nu}{\nu-1}$ large enough, choosing $0<\gamma<\sqrt{\frac{2-\mu}{2}}$ small enough, we complete the proof.
\end{proof}

\begin{lemma}\label{ous1}
Assume that $(f_1)$-$(f_3)$ and $(f_5)$ hold, then we have $\inf\limits_{u\in\mathcal{P}(a)}\|(-\Delta)^{1/4}u\|_2>0$ and $m(a)>0$.
\end{lemma}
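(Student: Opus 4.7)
The plan is to derive both assertions as direct consequences of the quantitative lower bounds in Lemma \ref{ous11}, combined with the scaling/maximization structure provided by Lemma \ref{pa}. Let $\gamma>0$ be the constant produced by Lemma \ref{ous11}.

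For the first claim, I would argue by contradiction. Suppose there exists $u\in\mathcal{P}(a)$ with $\|(-\Delta)^{1/4}u\|_2\leq\gamma$. Since $u\in S(a)$ with $a>0$, we have $u\not\equiv 0$; moreover $u\in L^2(\mathbb{R})$ forces $u$ not to be (a non-zero) constant, so $\|(-\Delta)^{1/4}u\|_2>0$. Then Lemma \ref{ous11} gives
\[
0=P(u)\geq \tfrac{1}{2}\|(-\Delta)^{1/4}u\|_2^{2}>0,
\]
a contradiction. Hence $\|(-\Delta)^{1/4}u\|_2>\gamma$ for every $u\in\mathcal{P}(a)$, and in particular $\inf_{u\in\mathcal{P}(a)}\|(-\Delta)^{1/4}u\|_2\geq\gamma>0$.

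For the second claim, I would exploit the fact, recorded in Lemma \ref{pa}, that for any $u\in S(a)$ the function $I_u(\beta)=J(\mathcal{H}(u,\beta))$ attains its unique maximum at the unique $\beta_u\in\mathbb{R}$ such that $\mathcal{H}(u,\beta_u)\in\mathcal{P}(a)$. Given $u\in\mathcal{P}(a)$, we have $\mathcal{H}(u,0)=u\in\mathcal{P}(a)$, so by uniqueness $\beta_u=0$ and $J(u)=\max_{\beta\in\mathbb{R}}I_u(\beta)$. Now set
\[
\beta_0:=2\log\!\Big(\frac{\gamma}{\|(-\Delta)^{1/4}u\|_2}\Big),
\]
which is finite since $\|(-\Delta)^{1/4}u\|_2>0$. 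A direct computation using the scaling identities derived in the proof of Lemma \ref{mountain} gives $\|\mathcal{H}(u,\beta_0)\|_2=a$ and $\|(-\Delta)^{1/4}\mathcal{H}(u,\beta_0)\|_2=\gamma$, so Lemma \ref{ous11} applies to $\mathcal{H}(u,\beta_0)$ and yields $J(\mathcal{H}(u,\beta_0))\geq \gamma^{2}/4$. Combining with the maximality of $I_u$ at $0$,
\[
J(u)=I_u(0)\geq I_u(\beta_0)=J(\mathcal{H}(u,\beta_0))\geq \frac{\gamma^{2}}{4}.
\]
Taking the infimum over $u\in\mathcal{P}(a)$ yields $m(a)\geq \gamma^{2}/4>0$.

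I do not anticipate any genuine obstacle here: the scheme only uses the already-established quantitative estimates of Lemma \ref{ous11} and the maximization description of $\mathcal{P}(a)$ from Lemma \ref{pa}. The only mild care needed is in justifying the strict inequality $\|(-\Delta)^{1/4}u\|_2>0$ for $u\in S(a)$ (so that one can both contradict $P(u)=0$ in the first part and choose a legitimate $\beta_0\in\mathbb{R}$ in the second), which follows because a nonzero $L^2(\mathbb{R})$ function cannot be constant.
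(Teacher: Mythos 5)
Your proof is correct and takes essentially the same route as the paper: both parts hinge on the quantitative bounds of Lemma \ref{ous11} and, for the energy lower bound, on the maximization description of $\mathcal{P}(a)$ from Lemma \ref{pa}. The only (mild) difference is in the first claim: the paper argues on a sequence $\{u_n\}\subset\mathcal{P}(a)$ with $\|(-\Delta)^{1/4}u_n\|_2\to 0$, concludes $\|(-\Delta)^{1/4}u_n\|_2=0$, and then uses $P(u_n)=0$ together with $(f_3)$ to deduce $u_n\to 0$ a.e.\ and contradict $a>0$; your version reaches the same contradiction more directly by noting that a nonzero $L^2(\mathbb{R})$ function cannot have vanishing $(-\Delta)^{1/4}$-seminorm, which in fact gives the slightly stronger conclusion $\|(-\Delta)^{1/4}u\|_2>\gamma$ for every $u\in\mathcal{P}(a)$.
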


\begin{proof}
By Lemma $\ref{pa}$, we know $\mathcal{P}(a)$ is nonempty. Supposed that there exists a sequence $\{u_n\}\subset \mathcal{P}(a)$ such that $\|(-\Delta)^{1/4}u_n\|_2\to0$ as $n\to\infty$,
then by Lemma $\ref{ous11}$, up to subsequence,
\begin{align*}
0=P(u_n)\geq\frac{1}{2} \|(-\Delta)^{1/4}u_n\|_2^2\geq0,
\end{align*}
which implies that $\|(-\Delta)^{1/4}u_n\|_2^2=0$ for any $n\in\mathbb{N}^+$.
By $(f_3)$ and $P(u_n)=0$, we have
\begin{align*}
0=&(2-\mu)\int_{\mathbb{R}}(I_\mu*F(u_n))F(u_n)\mathrm{d}x-\int_{\mathbb{R}}(I_\mu*F(u_n))f(u_n)u_n\mathrm{d}x\\
\leq& (\frac{2-\mu}{\theta}-1) \int_{\mathbb{R}}(I_\mu*F(u_n))f(u_n)u_n\mathrm{d}x\leq0.
\end{align*}
So $u_n\to0$ a.e. in $\mathbb{R}$, which contradicts  $a>0$.

From Lemma $\ref{pa}$, we know that for any $u\in\mathcal{P}(a)$,
\begin{align*}
J(u)=J(\mathcal{H}(u,0))\geq J(\mathcal{H}(u,\beta)),\quad \forall \ \beta\in\mathbb{R}.
\end{align*}
 Let $\gamma>0$ be the number given by Lemma $\ref{ous11}$ and $e^{\beta}=\frac{\gamma^2}{ \|(-\Delta)^{1/4}u\|_2^2}$, then $ \|(-\Delta)^{1/4}\mathcal{H}(u,\beta)\|_2^2=\gamma^2$. Applying Lemma $\ref{ous11}$ again, we deduce that
\begin{align*}
J(u)\geq J(\mathcal{H}(u,\beta))\geq \frac{1}{4}\|(-\Delta)^{1/4}u\|_2^2
\geq\frac{\gamma^2}{4}>0.
\end{align*}
This completes the proof.
\end{proof}

In order to estimate the upper bound of $m(a)$, let us consider the following sequence of nonnegative functions (see \cite{Taka}) supported in $B_1(0)$ given by
  \begin{align*}
\varpi_n(x)=\frac{1}{\sqrt{\pi}}
	 \begin{cases}
  \sqrt{\log n},\ \  &\mbox{for}\ \ |x|<\frac{1}{n},\\
 \frac{\log{\frac{1}{|x|}}}{\sqrt{\log n}},\ \  &\mbox{for}\ \ \frac{1}{n}\leq |x|\leq 1,\\
  0,\ \  &\mbox{for}\ \ |x|> 1.
      \end{cases}
 \end{align*}
One can check that $\varpi_n\in H^{1/2}(\mathbb{R})$. A direct calculation shows that
\begin{align*}
\|(-\Delta)^{1/4}\varpi_n\|_2^2=1+o(1),
\end{align*}
\begin{align*}
\delta_n:=\|\varpi_n\|_2^2=&\int_{-\frac{1}{n}}^{\frac{1}{n}}\frac{\log n}{\pi}\mathrm{d}x+
\int_{-1}^{-\frac{1}{n}}\frac{(\log|x|)^2}{\pi\log n}\mathrm{d}x+
\int_{\frac{1}{n}}^1\frac{(\log|x|)^2}{\pi\log n}\mathrm{d}x\\
=&\frac{4}{\pi}(\frac{1}{\log n}-\frac{1}{n\log n }-\frac{1}{n})=\frac{4}{\pi\log n}+o(\frac{1}{\log n}).
\end{align*}
Let $\omega_n:=\frac{a\varpi_n}{\|\varpi_n\|_2}$. Then $\omega_n \in S(a)$ and
\begin{align}\label{wnx}
\omega_n(x)=\frac{a}{2}
	 \begin{cases}
  \log n (1+o(1)),\ \  &\mbox{for}\ \ |x|<\frac{1}{n},\\
 \log{\frac{1}{|x|}}(1+o(1)),\ \  &\mbox{for}\ \ \frac{1}{n}\leq |x|\leq 1,\\
  0,\ \  &\mbox{for}\ \ |x|\geq 1.
      \end{cases}
\end{align}
Furthermore, we have
\begin{align}\label{wn}
\|(-\Delta)^{1/4}\omega_n\|_2^2&=\int_{\mathbb{R}}\int_{\mathbb{R}}\frac{\big|\frac{a}{\sqrt{\delta_n}}\varpi_n(x)-\frac{a}{\sqrt{\delta_n}}\varpi_n(y)\big|^2}{|x-y|^2}\mathrm{d}x\mathrm{d}y\nonumber\\
&=\frac{a^2}{\delta_n}\|(-\Delta)^{1/4}\varpi_n\|_2^2=\frac{\pi a^2\log n}{4}(1+o(1)).
\end{align}
For any $t>0$, let
\begin{align}\label{phi}
\Phi_n(t):=J(t\omega_n(t^{2}x))=\frac{t^2}{2}\|(-\Delta)^{1/4}\omega_n\|_2^2-\frac{1}{2}t^{2(\mu-2)}\int_{\mathbb{R}}(I_\mu*F(t\omega_n))F(t\omega_n)\mathrm{d}x.
\end{align}
From Lemmas {\ref{pa}} and $\ref{ous1}$, we infer that $m(a)=\inf\limits_{u\in S(a)}\max\limits_{\beta\in\mathbb{R}}J(\mathcal{H}(u,\beta))>0$, this together with $\omega_n\in S(a)$ yields that
\begin{align*}
m(a)\leq \max_{\beta\in\mathbb{R}}J(\mathcal{H}(\omega_n,\beta))=\max_{t>0}\Phi_n(t).
\end{align*}

\begin{lemma}\label{attain}
Assume that $(f_1)$-$(f_3)$ hold, then for any fixed $n\in \mathbb{N}^+$, $\max\limits_{t\geq0}\Phi_n(t)>0$ is attained at some $t_n>0$.
\end{lemma}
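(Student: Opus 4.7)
The plan is to regard $\Phi_n$ as a continuous function on $(0,+\infty)$ and show that $\Phi_n(t)\to 0^+$ as $t\to 0^+$ (with $\Phi_n(t)>0$ for small $t>0$) while $\Phi_n(t)\to -\infty$ as $t\to +\infty$, so that the supremum is attained at some interior $t_n$. Write $A_n:=\|(-\Delta)^{1/4}\omega_n\|_2^2$ and $B_n(t):=\int_{\mathbb{R}}(I_\mu*F(t\omega_n))F(t\omega_n)\,\mathrm{d}x$, so that $\Phi_n(t)=\tfrac{A_n}{2}t^2-\tfrac{1}{2}t^{2(\mu-2)}B_n(t)$. By \eqref{wnx} $\omega_n$ is bounded and supported in $B_1(0)$, so for $t$ in any compact subset $K\subset(0,+\infty)$ the function $t\omega_n$ stays in a uniformly bounded range. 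By continuity of $F$, the map $t\mapsto F(t\omega_n)$ is continuous with values in every $L^r(\mathbb{R})$, and \eqref{HLS} combined with dominated convergence yields continuity of $B_n$, hence of $\Phi_n$, on $(0,+\infty)$.

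For the behaviour near $0$, fix $\alpha>\pi$ and choose $q>3-\mu$. Using \eqref{Ft} together with $\|\omega_n\|_\infty<\infty$, for $t\in(0,1]$ the factor $e^{\alpha(t\omega_n)^2}-1$ is bounded by a constant $C_n$, so
\[
|F(t\omega_n(x))|\leq \zeta\,t^{\kappa+1}|\omega_n(x)|^{\kappa+1}+C_n\,t^{q}|\omega_n(x)|^{q}.
\]
Plugging into $B_n(t)$ and applying \eqref{HLS} gives $B_n(t)\leq C_n'\bigl(t^{2(\kappa+1)}+t^{2q}\bigr)$, so
\[
t^{2(\mu-2)}B_n(t)\leq C_n'\bigl(t^{2(\kappa+\mu-1)}+t^{2(q+\mu-2)}\bigr).
\]
Since $\kappa>2-\mu$ by $(f_1)$ and $q>3-\mu$, both exponents are strictly greater than $2$, so $t^{2(\mu-2)}B_n(t)=o(t^2)$ as $t\to 0^+$. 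Therefore $\Phi_n(t)=\tfrac{A_n}{2}t^2(1+o(1))$, which tends to $0^+$ and is strictly positive for all sufficiently small $t>0$.

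For the behaviour at infinity, $(f_3)$ gives $f(s)/F(s)\geq\theta/s$ for $s>0$ (with $F(s)>0$); integrating over $[1,s]$ yields $F(s)\geq F(1)s^{\theta}$ for $s\geq 1$. By \eqref{wnx} there exists $c_n>0$ such that $\omega_n(x)\geq c_n$ on $\{|x|\leq 1/n\}$, so for $t\geq 1/c_n$,
\[
B_n(t)\geq F(1)^{2}(tc_n)^{2\theta}\int_{|x|\leq 1/n}\int_{|y|\leq 1/n}\frac{\mathrm{d}x\,\mathrm{d}y}{|x-y|^{\mu}}=D_n\,t^{2\theta}
\]
for some $D_n>0$. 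Consequently $\Phi_n(t)\leq \tfrac{A_n}{2}t^2-\tfrac{D_n}{2}t^{2(\theta+\mu-2)}$, and since $\theta>3-\mu$ forces $2(\theta+\mu-2)>2$, we obtain $\Phi_n(t)\to-\infty$ as $t\to+\infty$.

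Combining the three facts, $\sup_{t>0}\Phi_n(t)>0$ and $\Phi_n$ approaches $0$ at $0^+$ and $-\infty$ at $+\infty$, so there exists a compact interval $[t_a,t_b]\subset(0,+\infty)$ outside of which $\Phi_n<\tfrac{1}{2}\sup\Phi_n$; continuity on $[t_a,t_b]$ then produces a global maximizer $t_n\in(0,+\infty)$ with $\Phi_n(t_n)>0$. The main obstacle is controlling the strongly negative factor $t^{2(\mu-2)}$ near zero under critical growth; this is handled by exploiting the polynomial bound in \eqref{Ft}, whose exponential piece becomes a harmless constant once $s$ is bounded, which in turn follows from the boundedness and compact support of $\omega_n$ for the fixed value of $n$.
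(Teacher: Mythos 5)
Your proof is correct and follows the same overall structure as the paper's: show $\Phi_n(t)>0$ for small $t$, $\Phi_n(t)\to-\infty$ as $t\to+\infty$, and invoke continuity. However, you take a more elementary route in both halves, which is worth noting. For the small-$t$ estimate, the paper invokes the fractional Trudinger--Moser inequality \eqref{att} to control the exponential factor in \eqref{Ft} when $\|(-\Delta)^{1/4}(t\omega_n)\|_2$ is small, and keeps $q>1$ together with $\nu$ close to $1$ so that the resulting exponent $2(q+\mu-2)+\tfrac{2(2-\mu)}{\nu}$ exceeds $2$. You instead exploit that, for a fixed $n$, $\omega_n$ is bounded and compactly supported, so $e^{\alpha(t\omega_n)^2}-1$ is simply bounded by a constant depending on $n$ for $t\le 1$; this removes the need for Trudinger--Moser here, at the cost of requiring the slightly stronger (but freely available) choice $q>3-\mu$. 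For the large-$t$ estimate, the paper cites the previously derived inequality \eqref{fff} to get $\int(I_\mu*F(t\omega_n))F(t\omega_n)\,\mathrm{d}x\geq t^{2\theta}\int(I_\mu*F(\omega_n))F(\omega_n)\,\mathrm{d}x$, whereas you re-derive the pointwise Ambrosetti--Rabinowitz consequence $F(s)\geq F(1)s^\theta$ for $s\geq 1$ and combine it with the positive lower bound of $\omega_n$ on $B_{1/n}(0)$; both yield a lower bound of order $t^{2\theta}$, so the conclusions coincide. Your explicit treatment of continuity of $B_n$ (via boundedness and compact support of $\omega_n$, dominated convergence, and \eqref{HLS}) fills in a step the paper leaves implicit.
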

\begin{proof}
For any fixed $n\in \mathbb{N}^+$, as $t>0$ small enough, fix $\alpha>\pi$ close to $\pi$ and $\nu>1$ close to $1$ such that
\begin{align*}
\frac{2\alpha\nu}{2-\mu}\|(-\Delta)^{1/4}(t\omega_n)\|_2^2< \pi.
\end{align*}
Arguing as $(\ref{tain1})$, by \eqref{att}, for $\nu'=\frac{\nu}{\nu-1}$, we have
\begin{align*}
t^{2(\mu-2)}\int_{\mathbb{R}}(I_\mu*F(t\omega_n))F(t\omega_n)\mathrm{d}x \leq&
Ct^{2(\mu-2)}\Big(\|t\omega_n\|_{\frac{2(\kappa+1)}{2-\mu}}^{2(\kappa+1)}+t^{\frac{2(2-\mu)}{\nu}}\|t\omega_n\|_{\frac{2q\nu'}{2-\mu}}^{2q}\Big)\\
=&Ct^{2(\kappa+\mu-1)}\|\omega_n\|_{\frac{2(\kappa+1)}{2-\mu}}^{2(\kappa+1)}+Ct^{2(q+\mu-2)+\frac{2(2-\mu)}{\nu}}\|\omega_n\|_{\frac{2q\nu'}{2-\mu}}^{2q}.
\end{align*}
Since $\kappa>2-\mu$, $q>1$, and $\nu$ close to $1$, we have $\Phi_n(t)>0$ for $t>0$ small enough. For $t>0$ large, by \eqref{fff}, we obtain
\begin{align*}
t^{2(\mu-2)}\int_{\mathbb{R}}(I_\mu*F(t\omega_n))F(t\omega_n)\mathrm{d}x \geq
t^{2(\theta+\mu-2)}\int_{\mathbb{R}}(I_\mu*F(\omega_n))F(\omega_n)\mathrm{d}x.\\
\end{align*}
Since $\theta>3-\mu$, we obtain $\Phi_n(t)<0$ for $t>0$ large enough.
Thus $\max\limits_{t\geq0}\Phi_n(t)>0$ is attained at some $t_n>0$.
\end{proof}

\begin{lemma}\label{contr}
Assume that $(f_1)$-$(f_3)$ and $(f_6)$ hold, then there exists $n\in \mathbb{N}^+$ large such that
\begin{align}
\max_{t\geq0}\Phi_n(t)<\frac{2-\mu}{4}.
\end{align}
\end{lemma}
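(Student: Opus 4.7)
The plan is to argue by contradiction: assume $\max_{t \geq 0}\Phi_n(t) \geq \frac{2-\mu}{4}$ for infinitely many $n$, with the max attained at some $t_n > 0$ by Lemma \ref{attain}. Writing $A_n := \|(-\Delta)^{1/4}\omega_n\|_2^2$ and $G_n(t) := \int_{\mathbb{R}}(I_\mu \ast F(t\omega_n))F(t\omega_n)\,\mathrm{d}x$, so that $\Phi_n(t) = \frac{t^2 A_n}{2} - \frac{t^{2(\mu-2)}G_n(t)}{2}$, the assumption together with the positivity of $G_n$ forces $s_n := t_n^2 A_n \geq \frac{2-\mu}{2}$, and hence, via \eqref{wn}, $t_n \geq C/\sqrt{\log n}$ for some $C>0$.

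The first estimate to derive is an upper bound on $G_n(t_n)$. The criticality condition $\Phi_n'(t_n) = 0$ expands, after multiplication by $t_n$, as $t_n^2 A_n = (\mu-2)t_n^{2(\mu-2)}G_n(t_n) + \tfrac{1}{2}t_n^{2\mu-3}G_n'(t_n)$. By symmetry of the Riesz convolution $G_n'(t) = 2\int(I_\mu \ast F(t\omega_n))f(t\omega_n)\omega_n\,\mathrm{d}x$; condition $(f_3)$ gives $\omega_n f(t\omega_n) \geq (\theta/t)F(t\omega_n)$, whence $G_n'(t) \geq (2\theta/t)G_n(t)$. Inserting this bound and using $\theta > 3-\mu$ yields
\[
G_n(t_n) \leq \frac{t_n^{2(3-\mu)}A_n}{\theta + \mu - 2}.
\]

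For a matching lower bound, I would invoke $(f_6)$. A standard L'H\^opital argument upgrades $(f_6)$ to $F(T) \geq \frac{\beta_0 - \epsilon}{2\pi T}e^{\pi T^2}(1-o(1))$ as $T \to \infty$, for any small $\epsilon > 0$. By \eqref{wnx}, $\omega_n$ equals a constant $c_n = \tfrac{a}{2}\log n(1+o(1))$ on $[-1/n,1/n]$, and $t_n c_n \to \infty$ by the first paragraph. Restricting the double integral defining $G_n(t_n)$ to the square $[-1/n,1/n]^2$ and computing $\int\!\!\int_{[-1/n,1/n]^2}|x-y|^{-\mu}\,\mathrm{d}x\,\mathrm{d}y = C_\mu n^{\mu-2}$ (finite since $\mu<1$) gives $G_n(t_n) \geq \frac{C_\epsilon\, e^{2\pi(t_n c_n)^2}}{n^{2-\mu}(t_n c_n)^2}$. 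The key identity $c_n^2/A_n = (\log n)/\pi \cdot (1+o(1))$, which follows directly from the definitions of $\omega_n$ and $\delta_n$, then produces $2\pi(t_n c_n)^2 = 2s_n \log n (1+o(1))$, i.e. $e^{2\pi(t_n c_n)^2} = n^{2 s_n (1+o(1))}$.

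Chaining the upper and lower bounds and simplifying via $A_n \sim \log n$ and $t_n^2 \sim s_n / \log n$, the contradiction reduces to an inequality of the form
\[
n^{2 s_n (1+o(1)) - (2-\mu)} \;\leq\; C\, s_n^{4-\mu}\,\log^{\mu-1}n.
\]
Since $s_n \geq \frac{2-\mu}{2}$, the left-hand exponent is bounded below by $o(1)$; because the Moser concentration has the explicit rate $o(1) = O(\log n / n)$, the factor $n^{o(1)}$ stays bounded below by a positive constant. If $s_n \to \infty$ the left side already dominates $n^{s_n}$, overwhelming the polynomial right-hand side; if $s_n$ stays bounded, then $\mu \in (0,1)$ makes $\log^{\mu-1}n \to 0$, so the right side tends to zero, contradicting the boundedness of the left side. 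The main obstacle I anticipate is the borderline regime $s_n \to \frac{2-\mu}{2}$, where the vanishing exponent on the left must be controlled precisely by the decay $o(1) = O(\log n/n)$ coming from the explicit form of $\omega_n$, so that $n^{o(1)}$ does not decay faster than $\log^{1-\mu}n$ grows.
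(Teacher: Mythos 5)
Your proposal follows the same overall strategy as the paper: compute $\iint_{B_{1/n}\times B_{1/n}}|x-y|^{-\mu}\,\mathrm{d}x\,\mathrm{d}y$, use the criticality condition $\Phi_n'(t_n)=0$ together with $(f_3)$ to upper-bound the nonlocal term, use $(f_6)$ (after upgrading to a lower bound on $F$) on the concentration region to lower-bound it, and compare exponents. The reorganization is mostly cosmetic: the paper trichotomizes on $l=\lim t_n^2\log n$ (with subcases $l=+\infty$, $l\in(0,\tfrac{2(2-\mu)}{\pi a^2})$, $l=\tfrac{2(2-\mu)}{\pi a^2}$), while you argue by contradiction to force $s_n=t_n^2 A_n\geq\tfrac{2-\mu}{2}$ directly and then split on whether $s_n$ is bounded. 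Your use of $G_n'(t)\geq\tfrac{2\theta}{t}G_n(t)$ to bound $G_n(t_n)$ is a small variant of the paper's \eqref{twn}, and the resulting exponent comparison is equivalent.

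The genuine issue is in the borderline regime $s_n\to\tfrac{2-\mu}{2}$, and specifically in your assertion that the relevant $o(1)$ has rate $O(\log n/n)$. Tracing the definitions, $c_n^2/A_n = \tfrac{\log n}{\pi\,\|(-\Delta)^{1/4}\varpi_n\|_2^2}$: the $\delta_n$ factors cancel, so the $(1+o(1))$ you introduce is $\tfrac{1}{\|(-\Delta)^{1/4}\varpi_n\|_2^2}-1$, governed entirely by the error in $\|(-\Delta)^{1/4}\varpi_n\|_2^2=1+o(1)$. The paper never states a rate for this quantity, and for the fractional one-dimensional Moser sequence the decay is of order $O(1/\log n)$, not $O(\log n/n)$ — the $O(\log n/n)$ rate you cite is the one attached to $\delta_n$, which has been cancelled out of the ratio $c_n^2/A_n$. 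The argument as you write it then hinges on (i) the sign of $\|(-\Delta)^{1/4}\varpi_n\|_2^2-1$ and (ii) boundedness of $\big(\|(-\Delta)^{1/4}\varpi_n\|_2^2-1\big)\log n$. Neither is established in your sketch, and the second fails outright if the Dirichlet error decays slower than $O(1/\log n)$. To close the gap you would need to prove (or cite, e.g.\ from \cite{Taka} or \cite{lula}) the precise asymptotic $\|(-\Delta)^{1/4}\varpi_n\|_2^2 = 1+O(1/\log n)$; with that in hand, the conclusion that $n^{(2-\mu)o(1)}$ stays bounded below does follow, and your contradiction with $C\,s_n^{4-\mu}(\log n)^{\mu-1}\to 0$ goes through. (To be fair, the paper's own case (iii) glosses over essentially the same delicacy by asserting $Q_n\to0^+$ without justification, but the paper's $Q_n$ carries only the $\delta_n$ error $O(\log n/n)$, whereas your reparametrization in terms of $s_n$ shifts the Dirichlet-norm error into the exponent, which is a genuinely harder quantity to control without the precise rate.)
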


\begin{proof}
First, we give the following estimate in $B_{\frac{1}{n}}(0)$,
\begin{align*}
  \int_{-\frac{1}{n}}^{\frac{1}{n}}\int_{-\frac{1}{n}}^{\frac{1}{n}}\frac{\mathrm{d}x\mathrm{d}y}{|x-y|^\mu}=\frac{2^{3-\mu}}{(1-\mu)(2-\mu)}(\frac{1}{n})^{2-\mu}:=C_\mu (\frac{1}{n})^{2-\mu}.
\end{align*}
By Lemma $\ref{attain}$, we know $\max\limits_{t\geq0}\Phi_n(t)$ is attained at some $t_n>0$. So $t_n$ satisfies
\begin{align*}
\frac{d}{dt}\Phi_n(t)\Big|_{t=t_n}=0.
\end{align*}
By $(f_3)$, we have
\begin{align}\label{twn}
t_n^2\|(-\Delta)^{1/4}\omega_n\|_2^2=&(\mu-2)t_n^{2(\mu-2)}\int_{\mathbb{R}}(I_\mu*F(t_n\omega_n))F(t_n\omega_n)\mathrm{d}x\nonumber\\
&+t_n^{2(\mu-2)}\int_{\mathbb{R}}(I_\mu*F(t_n\omega_n))f(t_n\omega_n)t_n\omega_n\mathrm{d}x\nonumber\\
 \geq& \frac{\theta+\mu-2}{\theta}t_n^{2(\mu-2)}\int_{\mathbb{R}}(I_\mu*F(t_n\omega_n))f(t_n\omega_n)t_n\omega_n\mathrm{d}x.
\end{align}

Note that
\begin{equation*}\label{tF}
  \liminf\limits_{t\rightarrow +\infty}\frac{tF(t)}{e^{\pi t^2}}\geq \liminf\limits_{t\rightarrow +\infty}\frac{\int_0^tsf(s)ds}{e^{\pi t^2}}=\liminf\limits_{t\rightarrow +\infty}\frac{f(t)}{2\pi e^{\pi t^2}}.
\end{equation*}
This with $(f_6)$ yields that, for any $\varepsilon>0$, there exists $R_\varepsilon>0$ such that for any $t\geq R_\varepsilon$,
\begin{equation}\label{ftF}
  f(t)\geq (\beta_0-\varepsilon)e^{\pi t^2},\quad tF(t)\geq \frac{\beta_0-\varepsilon}{2\pi}e^{\pi t^2}.
\end{equation}

$\bf{Case\ 1}$.
 If $\lim\limits_{n\to\infty}t_n^2\log n=0$, then $\lim\limits_{n\to\infty}t_n=0$. By $(\ref{wn})$, we have $\frac{t_n^2}{2}\|(-\Delta)^{1/4}\omega_n\|_2^2\to0$ as $n\rightarrow\infty$. Noted that $F(t_n\omega_n)>0$ by $(f_3)$, so we have
\begin{align*}
0<\Phi_n(t_n)\leq\frac{t_n^2}{2}\|(-\Delta)^{1/4}\omega_n\|_2^2,
\end{align*}
which implies that $\lim\limits_{n\to+\infty}\Phi_n(t_n)=0$, and we conclude.

$\bf{Case\ 2}.
$ If $\lim\limits_{n\to\infty}t_n^2\log n=l\in(0,+\infty]$. From $(\ref{wnx})$, $(\ref{wn})$, $(\ref{twn})$ and $(\ref{ftF})$, we have
\begin{align*}
t_n^2\Big(\frac{\pi a^2 \log n}{4}(1+o(1))\Big)&\geq\frac{\theta+\mu-2}{\theta}t_n^{2(\mu-2)}\int_{B_{\frac{1}{n}}(0)}\int_{B_{\frac{1}{n}}(0)}\frac{F(t_n\omega_n(y))f(t_n\omega_n (x)) t_n\omega_n(x)}{|x-y|^\mu}\mathrm{d}x\mathrm{d}y\\
&\geq\frac{(\theta+\mu-2)(\beta_0-\varepsilon)^2}{2\pi\theta}t_n^{2(\mu-2)}e^{\frac{\pi a^2t_n^2\log ^2n(1+o(1))}{2}}\int_{-\frac{1}{n}}^{\frac{1}{n}}\int_{-\frac{1}{n}}^{\frac{1}{n}}\frac{\mathrm{d}x\mathrm{d}y}{|x-y|^\mu}\\
&=\frac{C_\mu(\theta+\mu-2)(\beta_0-\varepsilon)^2}{2\pi\theta} t_n^{2(\mu-2)}e^{\Big(\frac{\pi a^2t_n^2\log n (1+o(1)))}{2}-(2-\mu)\Big)\log n}.
\end{align*}

(i) If $l=+\infty$, we get a contradiction from the inequality above. So $l\in(0,+\infty)$ and $\lim\limits_{n\rightarrow\infty}t_n=0$. In particular, using the inequality above again and letting $n\to +\infty$, we have $l\in(0,\frac{2(2-\mu)}{\pi a^2}]$.

(ii) If $l\in(0,\frac{2(2-\mu)}{\pi a^2})$, then by $(\ref{wn})$, we get
\begin{align*}
\lim\limits_{n\to\infty}\Phi_n(t_n)\leq \frac{1}{2}\lim\limits_{n\to\infty}t_n^2\|(-\Delta)^{1/4}\omega_n\|_2^2=\frac{\pi a^2 l}{8}<\frac{2-\mu}{4}.
\end{align*}

(iii) If $l=\frac{2(2-\mu)}{\pi a^2}$, by the definition of $\omega_n$, we can find that
\begin{equation*}
  Q_n:=\frac{\pi a^2t_n^2\log n}{2}(1+o(1)))-(2-\mu)\rightarrow 0^+,\quad \text{as $n\rightarrow\infty$}.
\end{equation*}
Using the Taylor's formula, we have
\begin{equation*}
  n^{Q_n}=1+Q_n\log n+\frac{Q_n^2\log ^2 n}{2}+\cdots\geq1.
\end{equation*}
Thus
\begin{equation*}
  \frac{\pi a^2 t_n^2 \log n}{4}=\frac{2-\mu}{2}\geq \frac{C_\mu(\theta+\mu-2)(\beta_0-\varepsilon)^2}{2\pi\theta} t_n^{2(\mu-2)}\rightarrow \infty, \quad \text{as $n\rightarrow\infty$,}
\end{equation*}
which is a contradiction. This ends the proof.
\end{proof}

\section{{\bfseries The monotonicity of the function $a\mapsto m(a)$}}\label{mono}

To guarantee the weak limit of a $(PS)_{m(a)}$ sequence is a ground state solution of problem \ref{problem}, in this section, we investigate the monotonicity of the function $a\mapsto m(a)$.
\begin{lemma}\label{6.1}
Assume that $(f_1)$-$(f_3)$ and $(f_5)$ hold, then the function $a\mapsto m(a)$ is non-increasing on $(0,+\infty)$.
\end{lemma}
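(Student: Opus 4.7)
\medskip

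\noindent\textbf{Proof plan.} Fix $0<a_1<a_2$ and, for an arbitrary $\varepsilon>0$, pick an almost-minimizer $u\in\mathcal{P}(a_1)$ with $J(u)\le m(a_1)+\varepsilon$. The plan is to transplant $u$ into $S(a_2)$ by a spatial dilation that, crucially in dimension one with $s=1/2$, is conformal for the kinetic energy but expands the Choquard nonlinearity. Concretely, set $\lambda=(a_2/a_1)^2>1$ and
\[
v(x):=u(x/\lambda).
\]
By a direct change of variables $\|v\|_2^2=\lambda\|u\|_2^2=a_2^2$, so $v\in S(a_2)$, and the scaling identity
\[
\|(-\Delta)^{1/4}v\|_2^2=\|(-\Delta)^{1/4}u\|_2^2
\]
holds because $N-2s=0$ when $N=1$ and $s=1/2$.

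Next I would compare the fiber maps $\beta\mapsto J(\mathcal{H}(v,\beta))$ and $\beta\mapsto J(\mathcal{H}(u,\beta))$. Since $\mathcal{H}(v,\beta)(x)=\mathcal{H}(u,\beta)(x/\lambda)$, the same conformal invariance yields $\|(-\Delta)^{1/4}\mathcal{H}(v,\beta)\|_2^2=e^\beta\|(-\Delta)^{1/4}u\|_2^2$, while a change of variables in the Hardy--Littlewood double integral gives
\[
\int_{\mathbb{R}}(I_\mu*F(\mathcal{H}(v,\beta)))F(\mathcal{H}(v,\beta))\mathrm{d}x=\lambda^{2-\mu}\int_{\mathbb{R}}(I_\mu*F(\mathcal{H}(u,\beta)))F(\mathcal{H}(u,\beta))\mathrm{d}x.
\]
Combining these,
\[
J(\mathcal{H}(v,\beta))=J(\mathcal{H}(u,\beta))-\frac{\lambda^{2-\mu}-1}{2}\int_{\mathbb{R}}(I_\mu*F(\mathcal{H}(u,\beta)))F(\mathcal{H}(u,\beta))\mathrm{d}x.
\]
Assumption $(f_3)$ ensures $F(t)>0$ for $t\ne 0$, so the subtracted term is nonnegative, and $J(\mathcal{H}(v,\beta))\le J(\mathcal{H}(u,\beta))$ for every $\beta\in\mathbb{R}$.

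Finally, by Lemma \ref{pa} applied to $v\in S(a_2)$, there exists a unique $\beta_v\in\mathbb{R}$ with $\mathcal{H}(v,\beta_v)\in\mathcal{P}(a_2)$ and $J(\mathcal{H}(v,\beta_v))=\max_{\beta\in\mathbb{R}}J(\mathcal{H}(v,\beta))$. Applying the same lemma to $u\in\mathcal{P}(a_1)$ gives $J(u)=\max_{\beta\in\mathbb{R}}J(\mathcal{H}(u,\beta))$. Hence
\[
m(a_2)\le J(\mathcal{H}(v,\beta_v))\le\max_{\beta\in\mathbb{R}}J(\mathcal{H}(u,\beta))=J(u)\le m(a_1)+\varepsilon,
\]
and letting $\varepsilon\to 0$ produces $m(a_2)\le m(a_1)$.

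The only delicate ingredient is the pair of scaling identities for the kinetic term and the Choquard term; both are routine once one exploits that $N=1,s=1/2$ puts us in the conformal regime, so I expect no genuine obstacle. The nonemptiness of $\mathcal{P}(a_1)$ needed to start the argument is supplied by Lemma \ref{pa}, and positivity of $F$ coming from $(f_3)$ is what makes the comparison $J(\mathcal{H}(v,\beta))\le J(\mathcal{H}(u,\beta))$ go the right way.
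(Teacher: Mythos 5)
Your proof is correct, and it takes a genuinely different route from the paper. The paper argues by truncating an almost-minimizer $u$ with a cut-off $\varrho(\varepsilon x)$, adjusting the scaling parameter via Lemma \ref{pa}, and then adding a disjointly-supported bump $\mathcal{H}(v_{\varepsilon_0},h)$ far from the support of the truncation, sending $h\to-\infty$ so that the bump's energy contribution vanishes. Your argument instead exploits that the $\dot H^{1/2}(\mathbb{R})$ seminorm is conformally invariant in dimension $N=1$ with $s=1/2$: the dilation $v(x)=u(x/\lambda)$ with $\lambda=(a_2/a_1)^2>1$ preserves the kinetic term, rescales the mass from $a_1^2$ to $a_2^2$, and multiplies the Choquard term by the factor $\lambda^{2-\mu}>1$, which (since $F>0$ by $(f_3)$) can only lower $J$ along every fiber $\mathcal{H}(\cdot,\beta)$. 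Combined with Lemma \ref{pa}, this gives $m(a_2)\le\max_\beta J(\mathcal{H}(v,\beta))\le\max_\beta J(\mathcal{H}(u,\beta))=J(u)\le m(a_1)+\varepsilon$, and every step checks out: the change-of-variables identities for the seminorm and the double integral are exactly as you claim, and $\mathcal{H}(v,\beta)(x)=\mathcal{H}(u,\beta)(x/\lambda)$ follows at once from the definition of $\mathcal{H}$. The trade-off is that your argument is tied to the mass-critical scaling exponent $N=2s$ (otherwise the kinetic term picks up a nontrivial power of $\lambda$ and the comparison breaks), whereas the paper's cut-off-plus-distant-bump construction is a standard technique that works in any dimension; but for the present problem your approach is shorter and more transparent, and in particular it avoids the slightly delicate limit $h\to-\infty$ and the verification that $\beta_{s_h}$ stays bounded above.
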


\begin{proof}
For any given $a>0$, if $\hat{a}>a$, we prove that $m(\hat{a})\leq m(a)$.
By the definition of $m(a)$, for any $\delta>0$, there exists $u\in\mathcal{P}(a)$ such that
\begin{align}\label{6.1}
J(u)\leq m(a)+\frac{\delta}{3}.
\end{align}
Consider a cut-off function $\varrho\in C_0^{\infty}(\mathbb{R},[0,1])$ such that $\varrho(x)=1$ if $|x|\leq 1$ and $\varrho(x)=0$ if $|x|\geq 2$. For any $\varepsilon>0$ small, define
\begin{align*}
u_{\varepsilon}(x):=\varrho(\varepsilon x)u(x)\in H^{1/2}(\mathbb{R})\backslash\{0\},
\end{align*}
then $u_\varepsilon\to u$ in $H^{1/2}(\mathbb{R})$ as $\varepsilon\to0^+$. From Proposition $\ref{m}$ and Lemma $\ref{pa}$, we have $\beta_{u_\varepsilon}\to \beta_u=0$ in $\mathbb{R}$ and $\mathcal{H}(u_\varepsilon,\beta_{u_\varepsilon})\to \mathcal{H}(u,\beta_u)=u$ in $H^{1/2}(\mathbb{R})$ as $\varepsilon\to0^+$.
Fix $\varepsilon_0>0$ small enough such that
\begin{align}\label{6.2}
 J(\mathcal{H}(u_{\varepsilon_0}, \beta_{u_{\varepsilon_0}}))\leq J(u)+\frac{\delta}{3}.
\end{align}
Let $v\in C_0^{\infty}(\mathbb{R})$ satisfy $supp (v)\subset B_{1+\frac{4}{\varepsilon_0}}(0)\backslash B_{\frac{4}{\varepsilon_0}}(0)$, and set
\begin{align*}
v_{\varepsilon_0}=\frac{\hat{a}^2-\|u_{\varepsilon_0}\|_2^2}{\|v\|_2^2}v.
\end{align*}
Define $s_h:=u_{\varepsilon_0}+\mathcal{H}(v_{\varepsilon_0},h)$ for $h<0$. Since $dist(u_{\varepsilon_0},\mathcal{H}(v_{\varepsilon_0},h))\geq \frac{2}{\varepsilon_0}>0$, we obtain $\|s_h\|_2^2=\hat{a}^2$, i.e., $s_h\in S(\hat{a})$.
We claim that  $\beta_{s_h}$ is bounded from above as $h\to-\infty$. Otherwise, by $(f_3)$, $(\ref{fff})$ and $s_h\to u_{\varepsilon_0}\neq 0$ a.e. in $\mathbb{R}$ as $h\to-\infty$, one has
\begin{align*}
0\leq \lim_{n\to\infty}e^{-\beta_{s_h}}J(\mathcal{H}(s_h, \beta_{s_h}))
&\leq \lim_{n\to\infty} \frac{1}{2}\Big[\|(-\Delta)^{1/4}s_h\|_2^2-e^{(\theta+\mu-3)\beta_{s_h}}\int_{\mathbb{R}}(I_\mu*F(s_h))F(s_h)\mathrm{d}x
\Big]\\
&=-\infty,
\end{align*}
which leads to a contradiction.
Thus $\beta_{s_h}+h\to-\infty$ as $h\to-\infty$, by $(f_3)$, we get
\begin{align}\label{6.3}
J(\mathcal{H}(v_{\varepsilon_0}, \beta_{s_h}+h))\leq \frac{e^{\beta_{s_h}+h}}{2}\|(-\Delta)^{1/4}v_{\varepsilon_0}\|_2^2\to0,\quad \mbox{as}\ h\to-\infty.
\end{align}
We deduce from Lemma $\ref{pa}$ and $(\ref{6.1})$-$(\ref{6.3})$ that
\begin{align*}
m(\hat{a})\leq J(\mathcal{H}(s_h, \beta_{s_h}))
=&J(\mathcal{H}(u_{\varepsilon_0}, \beta_{s_h}))+J(\mathcal{H}(\mathcal{H}(v_{\varepsilon_0},h), \beta_{s_h}))\\
=& J(\mathcal{H}(u_{\varepsilon_0}, \beta_{s_h}))+J(\mathcal{H}(v_{\varepsilon_0}, \beta_{s_h}+h))\\
\leq& J(\mathcal{H}(u_{\varepsilon_0}, \beta_{u_{\varepsilon_0}}))+J(\mathcal{H}(v_{\varepsilon_0}, \beta_{s_h}+h))\\
\leq& m(a)+\delta.
\end{align*}
By the arbitrariness of $\delta>0$, we deduce that $m(\hat{a})\leq m(a)$ for any $\hat{a}>a$.
\end{proof}

\begin{lemma}\label{6.2}
Assume that $(f_1)$-$(f_3)$ and $(f_5)$ hold. Suppose that problem $(\ref{problem})$ possesses a ground state solution $u$ with $\lambda<0$, then $m(a^*)< m(a)$ for any $a^*>a$ close to $a$.
\end{lemma}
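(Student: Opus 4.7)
The plan is to perturb $u$ by a dilation that increases its $L^2$-norm, thereby producing an element of $S(a^*)$ that projects (via $\mathcal{H}$) onto the Poho\u{z}aev manifold $\mathcal{P}(a^*)$ with energy strictly below $m(a)$. The decisive structural fact is that, in dimension one with $s = 1/2$, the Gagliardo semi-norm is invariant under the scaling $u(\cdot) \mapsto u(\cdot/t)$, while the Choquard nonlocal term picks up a factor $t^{2-\mu}$.

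Concretely, for $t > 1$ I would set $w_t(x) := u(x/t)$, so that $w_t \in S(\sqrt{t}\,a)$. A direct change-of-variables computation (using the two scalings just mentioned) gives, for every $\beta \in \mathbb{R}$,
\[
J(\mathcal{H}(w_t,\beta)) \;=\; J(\mathcal{H}(u,\beta)) \;-\; \tfrac{1}{2}(t^{2-\mu}-1)\,e^{(\mu-2)\beta}\,B(\beta),
\]
where $B(\beta):=\int_{\mathbb{R}}\bigl(I_\mu * F(e^{\beta/2}u)\bigr)F(e^{\beta/2}u)\,\mathrm{d}x$. By $(f_3)$ one has $F(s) > 0$ for every $s \neq 0$, so $B(\beta) > 0$ for all $\beta \in \mathbb{R}$; in fact, testing the equation against $u$ and combining with $P(u)=0$ yields $\lambda a^2 = -(2-\mu)\,B(0)$, so the hypothesis $\lambda < 0$ is equivalent to $B(0) > 0$, which is precisely the positivity I will exploit.

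By Lemma \ref{pa} applied on $S(\sqrt{t}\,a)$ there is a unique $\beta_t \in \mathbb{R}$ with $\mathcal{H}(w_t,\beta_t) \in \mathcal{P}(\sqrt{t}\,a)$, hence $m(\sqrt{t}\,a) \leq J(\mathcal{H}(w_t,\beta_t))$. Since $u \in \mathcal{P}(a)$, the same Lemma \ref{pa} identifies $\beta=0$ as the unique maximizer of $\beta \mapsto J(\mathcal{H}(u,\beta))$, attaining the value $m(a)$; in particular $J(\mathcal{H}(u,\beta_t)) \leq m(a)$. Inserting these two facts into the displayed identity and using $t > 1$ together with $B(\beta_t)>0$,
\[
m(\sqrt{t}\,a) \;\leq\; J(\mathcal{H}(u,\beta_t)) - \tfrac{1}{2}(t^{2-\mu}-1)\,e^{(\mu-2)\beta_t}\,B(\beta_t) \;<\; J(\mathcal{H}(u,\beta_t)) \;\leq\; m(a).
\]
Choosing $t=(a^*/a)^2 > 1$ then yields $m(a^*) < m(a)$ for every $a^* > a$ (in particular for $a^*$ close to $a$).

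The main obstacle, to the extent there is one, is the scaling calculation that produces the displayed identity: it relies on $\|(-\Delta)^{1/4} w_t\|_2 = \|(-\Delta)^{1/4} u\|_2$ (a feature of $s = 1/2$ in one dimension) and on the $(2-\mu)$-homogeneity of the Riesz convolution kernel. After that, strictness is immediate because the subtracted term $\tfrac{1}{2}(t^{2-\mu}-1)\,e^{(\mu-2)\beta_t}\,B(\beta_t)$ is positive for any value of $\beta_t \in \mathbb{R}$, so no delicate asymptotic control on $\beta_t$ as $t \to 1^+$ is required.
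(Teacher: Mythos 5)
Your proof is correct, and it takes a genuinely different and in fact cleaner route than the paper's. The paper works with the fixed-scale perturbation $tu$ and a \emph{differential} argument: it shows $\partial_t J(\mathcal{H}(tu,\beta)) = t^{-1}\langle J'(\mathcal{H}(tu,\beta)),\mathcal{H}(tu,\beta)\rangle$, evaluates this at $(t,\beta)=(1,0)$ where it equals $\lambda a^2<0$, and then by continuity gets strict decrease on a small box $(1,1+\delta]\times[-\delta,\delta]$ via the mean value theorem — which is exactly why the conclusion is only stated ``for $a^*>a$ close to $a$'' and why $\lambda<0$ is used in an essential way. Your proof instead uses the dilation $w_t(x)=u(x/t)$ and exploits the exact scale invariance of the $\dot H^{1/2}(\mathbb{R})$ semi-norm (special to the $L^2$-critical pair $N=1$, $s=1/2$) together with the $(2-\mu)$-homogeneity of the Riesz kernel, yielding the global algebraic identity $J(\mathcal{H}(w_t,\beta)) = J(\mathcal{H}(u,\beta)) - \tfrac12(t^{2-\mu}-1)e^{(\mu-2)\beta}B(\beta)$ with $B(\beta)>0$ by $(f_3)$. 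Combined with Lemma \ref{pa} (which pins $\beta_u=0$ and makes $J(\mathcal{H}(u,\cdot))$ attain its maximum $m(a)$ there), this immediately gives $m(\sqrt{t}\,a)<m(a)$ for \emph{every} $t>1$, hence $m(a^*)<m(a)$ for all $a^*>a$ — a stronger conclusion — and without actually invoking $\lambda<0$ (your remark that $\lambda a^2 = -(2-\mu)B(0)$ is a nice consistency check but is not what makes the subtracted term positive; $(f_3)$ alone does). The trade-off is that your argument is tied to the exactly mass-critical scaling of this particular problem, whereas the paper's differential argument is the one that transfers to settings where the kinetic term is not scale-invariant.
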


\begin{proof}
For any $t>0$ and $\beta\in\mathbb{R}$, one has $\mathcal{H}(tu,\beta)\in S(ta)$ and
\begin{align*}
J(\mathcal{H}(tu,\beta))=\frac{t^2e^\beta}{2}\|(-\Delta)^{1/4}u\|_2^2-\frac{e^{(\mu-2)\beta}}{2}
\int_{\mathbb{R}}(I_\mu*F(te^\frac{\beta}{2}u))F(te^\frac{\beta}{2}u)\mathrm{d}x.
\end{align*}
Then
\begin{align*}
\frac{\partial{J(\mathcal{H}(tu,\beta))}}{\partial t}&=te^\beta \|(-\Delta)^{1/4}u\|_2^2-e^{(\mu-2)\beta}
\int_{\mathbb{R}}(I_\mu*F(te^\frac{\beta}{2}u))f(te^\frac{\beta}{2}u)e^{\frac{\beta}{2}}u\mathrm{d}x\\&=\frac{\langle J'(\mathcal{H}(tu,\beta)),\mathcal{H}(tu,\beta) \rangle}{t}.
\end{align*}
For convenience, we denote $\tau(t,\beta):=J(\mathcal{H}(tu,\beta))$.
By Proposition $\ref{m}$, $\mathcal{H}(tu,\beta)\to u$ in $H^{1/2}(\mathbb{R})$ as $(t,\beta)\to(1,0)$.
Since $\lambda<0$, we have $\langle J'(u),u \rangle=\lambda\|u\|_2^2=\lambda a^2<0$.
Hence, for $\delta>0$ small enough, one has
\begin{align*}
\frac{\partial{\tau(t,\beta)}}{\partial t}<0 \quad\mbox{for any}\ (t,\beta)\in (1,1+\delta]\times[-\delta,\delta].
\end{align*}
For any $t\in(1,1+\delta]$ and $\beta\in [-\delta,\delta]$, using the mean value theorem, we obtain
\begin{align*}
\tau(t,\beta)=\tau(1,\beta)+(t-1)\cdot\frac{\partial{\tau(t,\beta)}}{\partial t}\Big|_{t=\xi}<\tau(1,\beta).
\end{align*}
for some $\xi\in(1,t)$.
By Lemma $\ref{pa}$, $\beta_{tu}\to\beta_u=0$ in $\mathbb{R}$ as $t\to 1^+$. For any $a^*>a$ close to $a$, let $\hat t=\frac{a^*}{a}$, then
$\hat t\in (1,1+\delta]$ and $\beta_{\hat t u}\in[-\delta,\delta]$.
Applying Lemma $\ref{pa}$ again, we have
\begin{align*}
m(a^*)\leq \tau(\hat t,\beta_{\hat t u})<\tau(1,\beta_{\hat tu})=J(\mathcal{H}(u, \beta_{\hat t u}))\leq J(u)=m(a).
\end{align*}
\end{proof}
From Lemmas \ref{6.1} and \ref{6.2}, we immediately have the following result.
\begin{lemma}\label{6.3}
Assume that $(f_1)$-$(f_3)$ and $(f_5)$ hold. Suppose that problem $(\ref{problem})$ possesses a ground state solution $u$ with $\lambda<0$, then $a\mapsto m(a)$ is decreasing on $(0,+\infty)$. 
\end{lemma}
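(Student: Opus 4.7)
The plan is to obtain Lemma \ref{6.3} as a direct corollary of Lemmas \ref{6.1} and \ref{6.2}, along the lines the authors already signal. Lemma \ref{6.1} supplies the weak monotonicity $m(a_1)\geq m(a_2)$ for all $0<a_1<a_2$, while Lemma \ref{6.2} supplies a strict right-drop $m(a^*)<m(a)$ for every $a^*$ sufficiently close to and larger than $a$, under the standing hypothesis that \eqref{problem} admits a ground state at $a$ with negative Lagrange multiplier $\lambda$. Turning these two ingredients into strict global decrease is then a purely elementary matter.

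Concretely, I would fix arbitrary $0<a_1<a_2$ and apply Lemma \ref{6.2} at $a_1$ to obtain some $\delta>0$ with $m(a^*)<m(a_1)$ for every $a^*\in(a_1,a_1+\delta)$. Setting $a^*:=\min\{a_1+\delta/2,\,a_2\}$, exactly one of two cases arises. If $a_2\leq a_1+\delta/2$, then $a^*=a_2$ and Lemma \ref{6.2} already delivers $m(a_2)<m(a_1)$. Otherwise $a_1<a^*<a_2$, and combining Lemma \ref{6.2} at $a_1$ with Lemma \ref{6.1} on the pair $a^*<a_2$ gives $m(a_2)\leq m(a^*)<m(a_1)$. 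Either way $m(a_2)<m(a_1)$, which is the strict monotonicity asserted.

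I do not foresee any real obstacle at this step: all of the analytic substance has already been carried out in Lemmas \ref{6.1} and \ref{6.2} (namely, the cut-off/translation construction producing the non-increase, and the mean-value argument in the variable $(t,\beta)\mapsto J(\mathcal{H}(tu,\beta))$ producing the strict drop to the right of $a$ when $\lambda<0$). The only care required in writing Lemma \ref{6.3} is the choice of the intermediate point $a^*\in(a_1,a_2]$ so that Lemma \ref{6.1} can be invoked on the sub-interval $[a^*,a_2]$, which is handled by the $\min$ above; this is really the entire content of the argument.
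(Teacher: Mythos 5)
Your argument is correct and is exactly the deduction the authors intend — the paper gives no proof at all, merely asserting that Lemma \ref{6.3} follows immediately from Lemmas \ref{6.1} and \ref{6.2}, and your two-case combination (Lemma \ref{6.2} for the local strict drop at $a_1$, Lemma \ref{6.1} to propagate the inequality to any larger $a_2$) is the straightforward way to make that deduction explicit. The only point worth flagging is that applying Lemma \ref{6.2} at an arbitrary $a_1$ tacitly reads the hypothesis ``problem \eqref{problem} possesses a ground state with $\lambda<0$'' as holding at the mass $a_1$ under consideration; this is the same implicit reading the paper relies on (in the proof of Theorem \ref{thm1.1} the lemma is invoked precisely at the mass $a_1=\|u\|_2$ of the weak limit, where such a ground state has just been produced), so you have inherited the paper's imprecision rather than introduced a gap.
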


\section{{\bfseries Palais-Smale sequence}}\label{ps}
In this section, using the minimax principle based on the homotopy stable family of compact subsets of $S(a)$ (see \cite{ghou} for more details), we construct a $(PS)_{m(a)}$ sequence on $\mathcal{P}(a)$ for $J|_{S(a)}$.

\begin{proposition}\label{pro}
Assume that $(f_1)$-$(f_3)$ and $(f_5)$ hold, then there exists a $(PS)_{m(a)}$ sequence $\{u_n\}\subset\mathcal{P}(a)$ for $J|_{S(a)}$.
\end{proposition}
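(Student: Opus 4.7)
My plan is to follow the now-standard Bartsch--Soave strategy: couple $J$ with the $L^2$-preserving scaling $\mathcal{H}$ to get an auxiliary functional on the product manifold $S(a)\times\mathbb{R}$, apply a homotopy-stable-family minimax in the spirit of Ghoussoub \cite{ghou}, and then push the resulting sequence back onto $\mathcal{P}(a)$. Concretely, I define $\tilde J:S(a)\times\mathbb{R}\to\mathbb{R}$ by $\tilde J(u,\beta):=J(\mathcal{H}(u,\beta))$. Since $\mathcal{H}(\cdot,\beta)$ preserves the $L^2$-norm and, by Proposition \ref{m}, depends continuously on $(u,\beta)$, one checks $\tilde J\in C^1(S(a)\times\mathbb{R},\mathbb{R})$ with
\begin{equation*}
\partial_\beta\tilde J(u,\beta)=P(\mathcal{H}(u,\beta)),\qquad \langle\partial_u\tilde J(u,\beta),v\rangle=\langle J'(\mathcal{H}(u,\beta)),\mathcal{H}(v,\beta)\rangle\ \text{for}\ v\in T_uS(a).
\end{equation*}
By Lemmas \ref{mountain}, \ref{pa} and \ref{ous1}, $m(a)=\inf_{u\in S(a)}\max_{\beta\in\mathbb{R}}\tilde J(u,\beta)>0$, and the inner maximum is attained at the unique $\beta_u$ characterized by $\mathcal{H}(u,\beta_u)\in\mathcal{P}(a)$.

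Next, I would take $\mathcal{F}:=\{\{u\}:u\in S(a)\}$ as the (trivially) homotopy stable family of singletons in $S(a)$ with empty boundary, whose associated minimax level for the map $u\mapsto\max_{\beta}\tilde J(u,\beta)$ is exactly $m(a)$. Applying Ghoussoub's minimax principle to $\tilde J$ on $S(a)\times\mathbb{R}$ then produces a sequence $\{(w_n,\beta_n)\}\subset S(a)\times\mathbb{R}$ with
\begin{equation*}
\tilde J(w_n,\beta_n)\to m(a),\quad \partial_\beta\tilde J(w_n,\beta_n)\to 0,\quad \|\partial_u\tilde J(w_n,\beta_n)\|_{T_{w_n}S(a)^*}\to 0.
\end{equation*}

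To pull back, I set $\tilde u_n:=\mathcal{H}(w_n,\beta_n)\in S(a)$. Using the derivative identities above together with the fact that $\mathcal{H}(\cdot,\beta)$ is an $L^2$-isometry and a bounded operator on the Gagliardo seminorm, I obtain $J(\tilde u_n)\to m(a)$, $P(\tilde u_n)\to 0$, and $\|J'|_{S(a)}(\tilde u_n)\|_*\to 0$. To land exactly in $\mathcal{P}(a)$, for each $n$ I would invoke Lemma \ref{pa} to pick the unique $\sigma_n\in\mathbb{R}$ with $u_n:=\mathcal{H}(\tilde u_n,\sigma_n)\in\mathcal{P}(a)$. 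Since $P(\tilde u_n)\to 0$, the strict monotonicity of the function $\Phi$ appearing in the proof of Lemma \ref{pa} (together with the uniform lower bound from Lemma \ref{ous1}) forces $\sigma_n\to 0$; Proposition \ref{m} then gives $\|u_n-\tilde u_n\|_{H^{1/2}(\mathbb{R})}\to 0$, so $\{u_n\}\subset\mathcal{P}(a)$ is still a $(PS)_{m(a)}$ sequence for $J|_{S(a)}$, which is the desired conclusion.

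The hard part will be making Ghoussoub's principle work cleanly on the non-compact product $S(a)\times\mathbb{R}$: one must design a pseudogradient-type deformation that respects the singleton family and simultaneously decreases $\tilde J$ in both the $S(a)$- and $\mathbb{R}$-directions without letting $\beta$ drift to $\pm\infty$. The unique-maximizer property from Lemma \ref{pa}, combined with the boundary behavior $\tilde J(u,\beta)\to 0^+$ as $\beta\to-\infty$ and $\tilde J(u,\beta)\to-\infty$ as $\beta\to+\infty$ from Lemma \ref{mountain}, is precisely what controls this drift and underlies the built-in conclusion $\partial_\beta\tilde J(w_n,\beta_n)\to 0$.
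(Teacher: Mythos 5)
Your strategy is the Jeanjean-style route: lift $J$ to the stretched functional $\tilde J(u,\beta)=J(\mathcal H(u,\beta))$ on the product $S(a)\times\mathbb R$, run a minimax there, then pull the PS sequence back to $\mathcal P(a)$. The paper does something genuinely different: it works entirely on $S(a)$ by defining $\mathcal I(u):=J(\mathcal H(u,\beta_u))$, establishes in Lemma \ref{c1} that $\mathcal I$ is of class $C^1$ with $\langle\mathcal I'(u),v\rangle=\langle J'(\mathcal H(u,\beta_u)),\mathcal H(v,\beta_u)\rangle$ (the Szulkin--Weth device), and then applies Ghoussoub's minimax theorem to $\mathcal I$ with the singleton family on $S(a)$. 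By absorbing the $\beta$-variable into $\mathcal I$ the paper never faces the product-manifold non-compactness that you flag as ``the hard part.''

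As written, your sketch has two concrete gaps. First, the minimax step is inconsistent: you take $\mathcal F$ to be the family of singletons of $S(a)$, yet claim Ghoussoub's theorem then yields a Palais--Smale sequence for $\tilde J$ on $S(a)\times\mathbb R$. Ghoussoub applied to singletons in $S(a)$ produces a PS sequence for the functional $u\mapsto\max_\beta\tilde J(u,\beta)=\mathcal I(u)$ on $S(a)$, and to even speak of such a sequence one must know $\mathcal I$ is $C^1$ --- precisely the content of Lemma \ref{c1}, which you never establish. If you genuinely want a PS sequence for $\tilde J$ on the product, you need a family of subsets of $S(a)\times\mathbb R$ (e.g.\ Jeanjean's paths) and a deformation on the product that controls $\beta$-drift; you acknowledge this but leave it unresolved. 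Second, your pullback asserts that $\sigma_n\to0$ together with Proposition \ref{m} gives $\|u_n-\tilde u_n\|_{H^{1/2}}\to0$. Proposition \ref{m} only covers $\mathcal H(u_n,\beta_n)\to\mathcal H(u,\beta)$ when $u_n\to u$ and $\beta_n\to\beta$; it does not give uniform continuity of $\mathcal H(\cdot,\sigma)$ near $\sigma=0$ over a merely bounded sequence $\{\tilde u_n\}$. The paper sidesteps this entirely: $u_n:=\mathcal H(v_n,\beta_{v_n})$ lies in $\mathcal P(a)$ by construction, and $\|J'|_{S(a)}(u_n)\|_*\to0$ is transferred from $\|\mathcal I'(v_n)\|_*\to0$ through the chain-rule identity of Lemma \ref{c1}, using the uniform bound $e^{-\beta_{v_n}}\le C$ that comes from the lower bound $\inf_{u\in\mathcal P(a)}\|(-\Delta)^{1/4}u\|_2>0$ of Lemma \ref{ous1}.
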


Following by \cite{willem}, we recall that for any $a>0$, the tangent space of $S(a)$ at $u$ is defined by
\begin{align*}
T_u:=\Big\{u\in H^{1/2}(\mathbb{R}) : \int_{\mathbb{R}}uv\mathrm{d}x=0\Big\}.
\end{align*}
To prove Proposition $\ref{pro}$, we borrow some arguments from \cite{soave2} and consider the functional $\mathcal{I}: S(a)\to\mathbb{R}$ defined by
\begin{align*}
\mathcal{I}(u)=J(\mathcal{H}(u,\beta_u)),
\end{align*}
where $\beta_u\in\mathbb{R}$ is the unique number obtained in Lemma \ref{pa} for any $u\in S(a)$. By Lemma \ref{pa}, we know that $\beta_u$ is continuous as a mapping for any $u\in S(a)$. However, it remains unknown that whether $\beta_u$ is of class $C^1$. Inspired by \cite[Proposition 2.9]{sz}, we have
\begin{lemma}\label{c1}
Assume that $(f_1)-(f_3)$ and $(f_5)$ hold, then the functional $\mathcal{I}: S(a)\to \mathbb{R}$ is of class $C^1$ and
\begin{align*}
\langle \mathcal{I}^{\prime}(u), v\rangle=&\frac{e^{\beta_u}}{2\pi}\int_{\mathbb{R}}\int_{\mathbb{R}}\frac{|u(x)-u(y)||v(x)-v(y)|}{|x-y|^2}\mathrm{d}x\mathrm{d}y
-e^{(\mu-2)\beta_u}\int_{\mathbb{R}}(I_\mu*F(e^{\frac{\beta_u}{2}}u))f(e^{\frac{\beta_u}{2}}u)e^{\frac{\beta_u}{2}}v\mathrm{d}x\\
=&\langle J^{\prime}(\mathcal{H}(u,\beta_u)) ,\mathcal{H}(v,\beta_u)\rangle
\end{align*}
for any $u\in S(a)$ and $v\in T_u$.
\end{lemma}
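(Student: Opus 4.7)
The plan is to circumvent the \emph{a priori} unknown regularity of $u\mapsto\beta_u$ by running a Szulkin--Weth style sandwich argument exploiting the optimality $\beta_u=\mathrm{argmax}_{\beta\in\mathbb{R}}J(\mathcal{H}(u,\beta))$ from Lemma \ref{pa}. Because $\beta_u$ maximizes $J(\mathcal{H}(u,\cdot))$, the first-order variation of $\mathcal{I}$ in the $\beta$-direction vanishes automatically, so only the variation in the $u$-direction survives, and the formula for $\langle\mathcal{I}'(u),v\rangle$ can be obtained without ever differentiating the Lagrange-type variable $\beta_u$ with respect to $u$.

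Concretely, fix $u\in S(a)$ and $v\in T_u$ and consider the projection curve
\begin{equation*}
\gamma(t):=\frac{a(u+tv)}{\|u+tv\|_2}\in S(a),\qquad t\in(-\varepsilon,\varepsilon),
\end{equation*}
which satisfies $\gamma(0)=u$ and $\gamma'(0)=v$ in $H^{1/2}(\mathbb{R})$ thanks to $\int_{\mathbb{R}}uv\,\mathrm{d}x=0$. Writing $\beta_t:=\beta_{\gamma(t)}$, the maximizing property of $\beta_u$ and $\beta_t$ yields the two-sided sandwich
\begin{equation*}
J(\mathcal{H}(\gamma(t),\beta_u))-J(\mathcal{H}(u,\beta_u))\leq\mathcal{I}(\gamma(t))-\mathcal{I}(u)\leq J(\mathcal{H}(\gamma(t),\beta_t))-J(\mathcal{H}(u,\beta_t)).
\end{equation*}
I would then expand both outer differences using the $C^{1}$ smoothness of $J$ on $H^{1/2}(\mathbb{R})$ together with the linearity of $\mathcal{H}(\cdot,\beta)$ in its first slot. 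For the left side the base point $\mathcal{H}(u,\beta_u)$ is fixed and the increment equals $\mathcal{H}(\gamma(t)-u,\beta_u)=t\,\mathcal{H}(v,\beta_u)+o(t)$, producing $t\langle J'(\mathcal{H}(u,\beta_u)),\mathcal{H}(v,\beta_u)\rangle+o(t)$. For the right side the base point drifts, but Lemma \ref{pa} gives $\beta_t\to\beta_u$, Proposition \ref{m} forces $\mathcal{H}(u,\beta_t)\to\mathcal{H}(u,\beta_u)$ and $\mathcal{H}(v,\beta_t)\to\mathcal{H}(v,\beta_u)$ in $H^{1/2}(\mathbb{R})$, and the continuity of $J'$ drives the upper bound to the same limit. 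Squeezing delivers $\langle\mathcal{I}'(u),v\rangle=\langle J'(\mathcal{H}(u,\beta_u)),\mathcal{H}(v,\beta_u)\rangle$, and unfolding $J'$ yields the explicit expression in the statement.

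To promote this Gateaux differential to a Fréchet derivative and to upgrade the regularity to $C^{1}$, I would argue that $u\mapsto\mathcal{I}'(u)$ factors through continuous operations: $u\mapsto(\beta_u,\mathcal{H}(u,\beta_u))$ is continuous by Lemma \ref{pa} and Proposition \ref{m}, $J':H^{1/2}(\mathbb{R})\to H^{1/2}(\mathbb{R})^{*}$ is continuous, and $v\mapsto\mathcal{H}(v,\beta_u)$ is a bounded linear operator depending continuously on $\beta_u$. The main technical obstacle I anticipate lies in the upper sandwich bound: its Taylor remainder is \emph{a priori} only $o(\|\mathcal{H}(\gamma(t)-u,\beta_t)\|)$, and one has to combine the continuity of $t\mapsto\beta_t$ with the uniform continuity of $\mathcal{H}$ supplied by Proposition \ref{m} on a compact neighborhood of $\beta_u$ in order to convert this remainder into a genuine $o(t)$ estimate, which is precisely what makes the squeeze close.
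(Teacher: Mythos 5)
Your proposal is correct and follows essentially the same Szulkin--Weth sandwich argument as the paper, which compares $\mathcal{I}(u+tv)-\mathcal{I}(u)$ from both sides by exploiting the maximality of $\beta_u$ and $\beta_{u+tv}$ and then lets $t\to0$ using the continuity of $u\mapsto\beta_u$ from Lemma \ref{pa} and Proposition \ref{m}. The only cosmetic difference is that you perturb along the normalized curve $\gamma(t)=a(u+tv)/\|u+tv\|_2\subset S(a)$ while the paper works directly with $u+tv$ (valid since $\beta_w$ and $\mathcal{H}$ extend to all nonzero $w\in H^{1/2}(\mathbb{R})$), and the paper expands the nonlinear term via the mean value theorem rather than an $o(t)$ Taylor remainder, which automatically absorbs the drift of the base point; both variants lead to the same conclusion.
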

\begin{proof}
Let $u\in S(a)$ and $v\in T_u$, for any $|t|$ small enough, by Lemma $\ref{pa}$,
\begin{align*}
\mathcal{I}(u+tv)-\mathcal{I}(u)=&J\big(\mathcal{H}(u+tv,\beta_{u+tv})\big)-J\big(\mathcal{H}(u,\beta_u)\big)\\
\leq &J\big(\mathcal{H}(u+tv,\beta_{u+tv})\big)-J\big(\mathcal{H}(u,\beta_{u+tv})\big)\\
=&\frac{1}{2}e^{\beta_{u+tv}}\Big[\|(-\Delta)^{1/4}(u+tv)\|_2^2-\|(-\Delta)^{1/4}u\|_2^2
\Big]\\
&-\frac{1}{2}e^{(\mu-2)\beta_{u+tv}}\int_{\mathbb{R}}\Big[\big(I_\mu*F(e^{\frac{\beta_{u+tv}}{2}}(u+tv))\big)F(e^{\frac{\beta_{u+tv}}{2}}(u+tv))\\
&-(I_\mu*F(e^{\frac{\beta_{u+tv}}{2}}u))F(e^{\frac{\beta_{u+tv}}{2}}u)
\Big]\mathrm{d}x\\
=&\frac{1}{2}e^{\beta_{u+tv}}\Big[t^2\|(-\Delta)^{1/4}v\|_2^2+2t\frac{1}{2\pi}\int_{\mathbb{R}}\int_{\mathbb{R}}\frac{|u(x)-u(y)||v(x)-v(y)|}{|x-y|^2}\mathrm{d}x\mathrm{d}y\Big]\\
&-\frac{1}{2}e^{(\mu-2)\beta_{u+tv}}\int_{\mathbb{R}}\big(I_\mu*F(e^{\frac{\beta_{u+tv}}{2}}(u+tv))\big)f(e^{\frac{\beta_{u+tv}}{2}}(u+\xi_ttv))e^{\frac{\beta_{u+tv}}{2}}tv\mathrm{d}x\\
&-\frac{1}{2}e^{(\mu-2)\beta_{u+tv}}\int_{\mathbb{R}}\big(I_\mu*F(e^{\frac{\beta_{u+tv}}{2}}u)\big)f(e^{\frac{\beta_{u+tv}}{2}}(u+\xi_ttv))e^{\frac{\beta_{u+tv}}{2}}tv\mathrm{d}x,
\end{align*}
where $\xi_t\in(0,1)$. On the other hand,
\begin{align*}
\mathcal{I}(u+tv)-\mathcal{I}(u)=&J\big(\mathcal{H}(u+tv,\beta_{u+tv})\big)-J\big(\mathcal{H}(u,\beta_u)\big)\\
\geq &J\big(\mathcal{H}(u+tv,\beta_u)\big)-J\big(\mathcal{H}(u,\beta_{u})\big)\\
\geq& \frac{1}{2}e^{\beta_{u}}\Big[t^2\|(-\Delta)^{1/4}v\|_2^2+2t\frac{1}{2\pi}\int_{\mathbb{R}}\int_{\mathbb{R}}\frac{|u(x)-u(y)||v(x)-v(y)|}{|x-y|^2}\mathrm{d}x\mathrm{d}y\Big]\\
&-\frac{1}{2}e^{(\mu-2)\beta_{u}}\int_{\mathbb{R}}\big(I_\mu*F(e^{\frac{\beta_{u}}{2}}(u+tv))\big)f(e^{\frac{\beta_{u}}{2}}(u+\zeta_ttv))e^{\frac{\beta_{u}}{2}}tv\mathrm{d}x\\
&-\frac{1}{2}e^{(\mu-2)\beta_{u}}\int_{\mathbb{R}}\big(I_\mu*F(e^{\frac{\beta_{u}}{2}}u)\big)f(e^{\frac{\beta_{u}}{2}}(u+\zeta_ttv))e^{\frac{\beta_{u}}{2}}tv\mathrm{d}x,
\end{align*}
where $\zeta_t\in(0,1)$. By Lemma $\ref{pa}$, $\lim\limits_{t\to0}\beta_{u+tv}=\beta_u$, from the above inequalities, we conclude
\begin{align*}
\lim_{t\to0}\frac{\mathcal{I}(u+tv)-\mathcal{I}(u)}{t}=&\frac{e^{\beta_u}}{2\pi}\int_{\mathbb{R}}\int_{\mathbb{R}}\frac{|u(x)-u(y)||v(x)-v(y)|}{|x-y|^2}\mathrm{d}x\mathrm{d}y\\
&-e^{(\mu-2)\beta_{u}}\int_{\mathbb{R}}\big(I_\mu*F(e^{\frac{\beta_{u}}{2}}u)\big)f(e^{\frac{\beta_{u}}{2}}u)e^{\frac{\beta_{u}}{2}}v\mathrm{d}x.
\end{align*}
Using Lemma $\ref{pa}$ again, We find that the G\^ateaux derivative of $\mathcal{I}$ is continuous linear in $v$ and continuous in $u$.
Therefore, by \cite[Proposition 1.3]{willem}, we obtain $\mathcal{I}$ is of class $C^1$. Changing variables in the integrals, we can prove the rest.
\end{proof}

\begin{lemma}\label{Minimax}
Assume that $(f_1)$-$(f_3)$ and $(f_5)$ hold. Let $\mathcal{F}$ be a homotopy stable family of compact subsets of $S(a)$ without boundary and set
\begin{align*}
m_{\mathcal{F}}:=\inf_{A\in\mathcal{F}}\max_{u\in A}\mathcal{I}(u).
\end{align*}
If $m_\mathcal{F}>0$, then there exists a $(PS)_{m_{\mathcal{F}}}$ sequence $\{u_n\}\subset \mathcal{P}(a)$ for $J|_{S(a)}$.
\end{lemma}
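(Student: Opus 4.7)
The plan is to apply Ghoussoub's minimax principle to the auxiliary $C^1$ functional $\mathcal{I}$ on $S(a)$ (established in Lemma \ref{c1}) to produce a PS sequence $\{v_n\}\subset S(a)$ for $\mathcal{I}$ at level $m_\mathcal{F}$, and then transport it into $\mathcal{P}(a)$ by setting $u_n:=\mathcal{H}(v_n,\beta_{v_n})$. The identity $\mathcal{I}(\eta(v))=\mathcal{I}(v)$ for $\eta(v):=\mathcal{H}(v,\beta_v)$ (valid because $\beta_{\eta(v)}=0$ for $\eta(v)\in\mathcal{P}(a)$) together with the minimax identity from Lemma \ref{pa} ensures $\inf_{S(a)}\mathcal{I}\cdots=m_\mathcal{F}$ is a meaningful value for $\mathcal{I}$.

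First, I would refine any maximizing sequence $\{A_n\}\subset\mathcal{F}$ with $\max_{A_n}\mathcal{I}\to m_\mathcal{F}$ into one lying in $\mathcal{P}(a)$. The map $\eta:S(a)\to S(a)$ is continuous by Proposition \ref{m} and Lemma \ref{pa}, and it is homotopic to the identity via $(t,v)\mapsto \mathcal{H}(v,t\beta_v)$; homotopy stability of $\mathcal{F}$ (without boundary) thus yields $\eta(A_n)\in\mathcal{F}$, and $\mathcal{I}\circ\eta=\mathcal{I}$ shows that $\{\eta(A_n)\}\subset\mathcal{P}(a)$ remains maximizing. Ghoussoub's principle then furnishes a PS sequence $\{v_n\}\subset S(a)$ with $\mathcal{I}(v_n)\to m_\mathcal{F}$, $\|\mathcal{I}'(v_n)\|_{T_{v_n}^\ast}\to 0$, and $\mathrm{dist}_{H^{1/2}}(v_n,\eta(A_n))\to 0$. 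Setting $u_n:=\mathcal{H}(v_n,\beta_{v_n})\in\mathcal{P}(a)\cap S(a)$ gives $J(u_n)=\mathcal{I}(v_n)\to m_\mathcal{F}$.

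Next, I would establish the uniform bounds needed for the PS transfer. Using $P(w)=0$ and $(f_3)$, for every $w\in\mathcal{P}(a)$ one finds
\[
\int_{\mathbb{R}}(I_\mu\ast F(w))F(w)\,\mathrm{d}x\le \frac{1}{\theta+\mu-2}\,\|(-\Delta)^{1/4}w\|_2^2,
\qquad
J(w)\ge \frac{\theta+\mu-3}{2(\theta+\mu-2)}\,\|(-\Delta)^{1/4}w\|_2^2.
\]
Since $\max_{\eta(A_n)}J$ is bounded, $\eta(A_n)$ lies in a bounded subset of $H^{1/2}(\mathbb{R})$; together with $\mathrm{dist}(v_n,\eta(A_n))\to 0$ this makes $\{v_n\}$ bounded in $H^{1/2}(\mathbb{R})$. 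Lemma \ref{ous1} provides $\|(-\Delta)^{1/4}u_n\|_2\ge\gamma>0$, and combined with the scaling identity $\|(-\Delta)^{1/4}u_n\|_2^2=e^{\beta_{v_n}}\|(-\Delta)^{1/4}v_n\|_2^2$ this yields a lower bound $\beta_{v_n}\ge -C$.

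Finally, I would transfer the PS condition. For any $\phi\in T_{u_n}$, the function $\mathcal{H}(\phi,-\beta_{v_n})$ belongs to $T_{v_n}$ by the $L^2$-isometry of $\mathcal{H}$ (since $\int v_n\mathcal{H}(\phi,-\beta_{v_n})\,\mathrm{d}x=\int u_n\phi\,\mathrm{d}x=0$), and Lemma \ref{c1} gives
\[
\langle J'(u_n),\phi\rangle=\langle \mathcal{I}'(v_n),\mathcal{H}(\phi,-\beta_{v_n})\rangle.
\]
The scaling $\|\mathcal{H}(\phi,-\beta_{v_n})\|_{1/2}^2=\tfrac{e^{-\beta_{v_n}}}{2\pi}[\phi]_{1/2}^2+\|\phi\|_2^2\le C\|\phi\|_{1/2}^2$ (using the lower bound on $\beta_{v_n}$) yields $\|J'|_{S(a)}(u_n)\|_{T_{u_n}^\ast}\le C\,\|\mathcal{I}'(v_n)\|_{T_{v_n}^\ast}\to 0$. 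The main obstacle is the first step: verifying that $\eta(A_n)\in\mathcal{F}$ through the precise form of homotopy stability from Ghoussoub's framework, and justifying the application of his minimax principle to $\mathcal{I}$ on $S(a)$, for which one only has $C^1$ regularity of $\mathcal{I}$ via Lemma \ref{c1} (rather than a direct pseudo-gradient on the constrained problem for $J$).
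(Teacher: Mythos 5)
Your proposal is correct and follows essentially the same route as the paper: deform a minimizing sequence $\{A_n\}$ into $\mathcal{P}(a)$ via the homotopy $(t,u)\mapsto\mathcal{H}(u,t\beta_u)$, apply Ghoussoub's minimax principle on the $C^1$-Finsler manifold $S(a)$ to the functional $\mathcal{I}$, and transport the resulting PS sequence by $u_n=\mathcal{H}(v_n,\beta_{v_n})$ using Lemma~\ref{c1}, the lower bound from Lemma~\ref{ous1}, and the scaling of the Gagliardo semi-norm. The only cosmetic differences are that you make explicit the $(f_3)$-based coercivity bound $J(w)\geq\tfrac{\theta+\mu-3}{2(\theta+\mu-2)}\|(-\Delta)^{1/4}w\|_2^2$ on $\mathcal{P}(a)$, which the paper leaves implicit, and you twice write ``maximizing'' where ``minimizing'' (for $m_\mathcal{F}$) is meant.
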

\begin{proof}
Let $\{A_n\}\subset \mathcal{F}$ be a minimizing sequence of $m_{\mathcal{F}}$. We define the mapping $\eta : [0,1]\times S(a)\to S(a)$, that is $\eta(t,u)=\mathcal{H}(u, t\beta_u)$. By Proposition \ref{m} and Lemma $\ref{pa}$, $\eta(t, u)$ is continuous in $[0,1]\times S(a)$ and satisfies $\eta(t,u)=u$ for all $(t,u)\in \{0\}\times S(a)$. Thus by the definition of $\mathcal{F}$ (see \cite[Definition 3.1]{ghou}), one has
\begin{align*}
Q_n:=\eta(1, A_n)=\{\mathcal{H}(u,\beta_u) : u\in A_n\}\subset \mathcal{F}.
\end{align*}
Obviously, $Q_n\subset \mathcal{P}(a)$ for any $n\in\mathbb{N}^+$. Since $\mathcal{I}(\mathcal{H}(u,\beta))=\mathcal{I}(u)$ for any $u\in S(a)$ and $\beta\in\mathbb{R}$, then
\begin{align*}
\max_{u\in Q_n}\mathcal{I}(u)=\max_{u\in A_n}\mathcal{I}(u)\to m_{\mathcal{F}},\quad \mbox{as $n\rightarrow\infty$},
\end{align*}
which implies that $\{Q_n\}\subset \mathcal{F}$ is another minimizing sequence of $m_{\mathcal{F}}$. Since $G(u):=\|u\|_2^2-a^2$ is of class $C^1$, and for any $u\in S(a)$, we have $\langle G'(u),u\rangle=2a^2>0$. Therefore, by the implicit function theorem, $S(a)$ is a $C^1$-Finsler manifold.  By \cite[Theorem 3.2]{ghou}, we obtain a $(PS)_{m_{\mathcal{F}}}$ sequence $\{v_n\}\subset S(a)$ for $\mathcal{I}$ such that $\lim\limits_{n\to+\infty}dist(v_n, Q_n)=0$.
Let
\begin{align*}
u_n:=\mathcal{H}(v_n,\beta_{v_n}),
\end{align*}
we prove that $\{u_n\}\subset \mathcal{P}(a)$ is the desired sequence.
We claim that there exists $C>0$ such that $e^{-\beta_{v_n}}\leq C$ for any $n\in\mathbb{N}^+$. Indeed, we have
\begin{align*}
e^{-\beta_{v_n}}=\frac{\|(-\Delta)^{1/4}v_n\|_2^2}{\|(-\Delta)^{1/4}u_n\|_2^2}.
\end{align*}
Since $\{u_n\}\subset \mathcal{P}(a)$, by Lemma $\ref{ous1}$, we know that there exists a constant $C>0$ such that $\|(-\Delta)^{1/4}u_n\|_2^2\geq C$ for any $n\in\mathbb{N}^+$. Since $Q_n\subset\mathcal{P}(a)$ for any $n\in\mathbb{N}^+$ and for any $u\in\mathcal{P}(a)$, one has $J(u)=\mathcal{I}(u)$, then
\begin{align*}
\max_{u\in Q_n}J(u)=\max_{u\in Q_n}\mathcal{I}(u)\to m_{\mathcal{F}}, \quad \mbox{as} \ n\to+\infty.
\end{align*}
This fact together with $Q_n\subset \mathcal{P}(a)$ and $(f_3)$ yields that $\{Q_n\}$ is uniformly bounded in $H^{1/2}(\mathbb{R})$, 
thus from $\lim\limits_{n\to \infty}dist(v_n,Q_n)=0$, we obtain $\sup\limits_{n\geq 1}\|v_n\|_{1/2}^2<+\infty$. This prove the claim.

Since $\{u_n\}\subset \mathcal{P}(a)$, one has $J(u_n)=\mathcal{I}(u_n)=\mathcal{I}(v_n)\to m_{\mathcal{F}}$ as $n\to\infty$. For any $\phi\in T_{u_n}$, we have
\begin{align*}
\int_{\mathbb{R}}v_n\mathcal{H}(\phi,-\beta_{v_n})\mathrm{d}x=\int_{\mathbb{R}}\mathcal{H}(v_n,\beta_{v_n})\phi\mathrm{d}x=\int_{\mathbb{R}}u_n\phi\mathrm{d}x=0,
\end{align*}
which implies that $\mathcal{H}(\phi,-\beta_{v_n})\in T_{v_n}$. Also,
\begin{align*}
\|\mathcal{H}(\phi,-\beta_{v_n})\|_{1/2}^2=e^{-\beta_{v_n}}\|(-\Delta)^{1/4}\phi\|_2^2+\|\phi\|_2^2
\leq C\|(-\Delta)^{1/4}\phi\|_2^2+\|\phi\|_2^2
\leq \max\{1,C\}\|\phi\|_{1/2}^2.
\end{align*}
By Lemma $\ref{c1}$, for any $\phi\in T_{u_n}$, we deduce that
\begin{align*}
\big|\langle J^{\prime}(u_n),\phi\rangle\big|=&\Big|\langle J^{\prime}\big(\mathcal{H}(v_n,\beta_{v_n})\big), \mathcal{H}\big(\mathcal{H}(\phi,-\beta_{v_n}),\beta_{v_n}\big)\rangle\Big|
=\Big|\big\langle \mathcal{I}^{\prime}(v_n),\mathcal{H}(\phi,-\beta_{v_n})\big\rangle\Big|\\
\leq&\|\mathcal{I}^{\prime}(v_n)\|_*\cdot\|\mathcal{H}(\phi,-\beta_{v_n})\|_{1/2}
\leq\max\{1, \sqrt{C}\}\|\mathcal{I}^{\prime}(v_n)\|_*\cdot\|\phi\|_{1/2},
\end{align*}
where $(Y^*,\|\cdot\|_*)$ is the dual space of Banach space $(Y,\|\cdot\|_{1/2})$. Hence we can deduce that
\begin{align*}
\|J^{\prime}(u_n)\|_* \leq\max\big\{1, \sqrt{C}\big\}\|\mathcal{I}^{\prime}(v_n)\|_*\to0, \quad \mbox{as}\ n\to\infty,
\end{align*}
 which implies that  $\{u_n\}$ is a $(PS)_{m_{\mathcal{F}}}$ sequence for $J|_{S(a)}$. This ends the proof.
\end{proof}

\noindent{\bfseries Proof of Proposition \ref{pro}.}
Note that the class $\mathcal{F}$ of all singletons included in $S(a)$ is a homotopy stable family of compact subsets of $S(a)$ without boundary.
By Lemma $\ref{Minimax}$, we know that if $m_{\mathcal{F}}>0$, then there exists a $(PS)_{m_{\mathcal{F}}}$ sequence $\{u_n\}\subset \mathcal{P}(a)$ for $J|_{S(a)}$.
By Lemma \ref{ous1}, we know $m(a)>0$, so if we can prove that $m_{\mathcal{F}}=m(a)$, then we complete the proof.

In fact,
by the definition of $\mathcal{F}$, we have
\begin{align*}
m_{\mathcal{F}}=\inf_{A\in\mathcal{F}}\max_{u\in A}\mathcal{I}(u)=\inf_{u\in S(a)}\mathcal{I}(u)=\inf_{u\in S(a)}\mathcal{I}(\mathcal{H}(u,\beta_u))=\inf_{u\in S(a)}J(\mathcal{H}(u,\beta_u)).
\end{align*}
For any $u\in S(a)$, it follows from $\mathcal{H}(u,\beta_u)\in \mathcal{P}(a)$ that $J(\mathcal{H}(u,\beta_u))\geq m(a)$, so $m_{\mathcal{F}}\geq m(a)$.
On the other hand, for any $u\in \mathcal{P}(a)$, by Lemma $\ref{pa}$, we deduce that $\beta_u=0$ and $J(u)=J(\mathcal{H}(u,0))\geq \inf\limits_{u\in S(a)}J(\mathcal{H}(u,\beta_u))$, which implies that $m(a)\geq m_{\mathcal{F}}$.
  \qed

For the sequence $\{u_n\}$ obtained in Proposition $\ref{pro}$, by $(f_3)$, we know that $\{u_n\}$ is bounded in $H^{1/2}(\mathbb{R})$. Up to a subsequence, we assume that $u_n\rightharpoonup u$ in $H^{1/2}(\mathbb{R})$. Furthermore, by $J\big|^{\prime}_{S(a)}(u_n)\to0$ as $n\to+\infty$ and the Lagrange multiplier rule, there exists $\{\lambda_n\}\subset \mathbb{R}$ such that
\begin{align}\label{key}
(-\Delta)^{1/2}u_n=\lambda_nu_n+(I_\mu*F(u_n))f(u_n)+o_n(1).
\end{align}

\begin{lemma}\label{non}
Assume that $(f_1)$-$(f_3)$ and $(f_5)$-$(f_6)$ hold, then up to a subsequence and up to translations in $\mathbb{R}$, $u_n\rightharpoonup u\neq0$ in $H^{1/2}(\mathbb{R})$.
\end{lemma}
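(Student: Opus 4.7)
The plan is to argue by contradiction: assume that for every sequence $\{y_n\}\subset \mathbb{R}$ the weak limit of $u_n(\cdot+y_n)$ in $H^{1/2}(\mathbb{R})$ is zero. Since $\{u_n\}$ is bounded in $H^{1/2}(\mathbb{R})$, I would invoke the fractional Lions concentration-compactness dichotomy: either there exist $r>0$, $\delta>0$ and $\{y_n\}\subset\mathbb{R}$ with $\int_{B_r(y_n)}|u_n|^2\,\mathrm{d}x\ge \delta$, or $u_n\to 0$ in $L^p(\mathbb{R})$ for every $p\in(2,\infty)$. In the first alternative, the translated sequence $u_n(\cdot+y_n)$ is still bounded in $H^{1/2}(\mathbb{R})$; passing to a subsequence and using the compact embedding $H^{1/2}(B_r(0))\hookrightarrow L^2(B_r(0))$, the weak limit $u$ satisfies $\int_{B_r(0)}|u|^2\,\mathrm{d}x\ge\delta>0$, hence $u\neq 0$, contradicting the standing assumption. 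Thus the vanishing alternative must hold.

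Next I would show that in the vanishing case both Choquard quantities tend to zero, namely
\[
\int_{\mathbb{R}}(I_\mu*F(u_n))F(u_n)\,\mathrm{d}x\to 0
\qquad\text{and}\qquad
\int_{\mathbb{R}}(I_\mu*F(u_n))f(u_n)u_n\,\mathrm{d}x\to 0.
\]
Using the Hardy-Littlewood-Sobolev inequality \eqref{HLS} with $r=t=\frac{2}{2-\mu}$ together with the pointwise bound \eqref{Ft}, the task reduces to controlling $\|F(u_n)\|_{2/(2-\mu)}$. The polynomial piece $\zeta\|u_n\|_{2(\kappa+1)/(2-\mu)}^{\kappa+1}$ vanishes because $\kappa>2-\mu$ implies $\frac{2(\kappa+1)}{2-\mu}>2$, so Lions' $L^p$-convergence applies. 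For the exponential piece I would apply Hölder with $\nu>1$ close to $1$, fix $\alpha>\pi$ close to $\pi$ so that $\frac{2\alpha\nu}{2-\mu}\|(-\Delta)^{1/4}u_n\|_2^2<\pi$ for large $n$, and use Lemma \ref{tm} (Adachi-Tanaka) together with the elementary inequality $(e^{s}-1)^p\le e^{ps}-1$ for $s\ge 0$, $p\ge 1$, to get a uniform bound on the exponential factor; the accompanying polynomial factor $\|u_n\|_{2q\nu'/(2-\mu)}^q$ tends to zero by vanishing. The strict upper bound on $\|(-\Delta)^{1/4}u_n\|_2^2$ needed to activate Trudinger-Moser is extracted by combining the Pohozaev identity $P(u_n)=0$, the identity $\|(-\Delta)^{1/4}u_n\|_2^2=2J(u_n)+\int(I_\mu*F(u_n))F(u_n)\,\mathrm{d}x$, condition $(f_3)$ (which gives $\int(I_\mu*F(u_n))F(u_n)\,\mathrm{d}x\le\frac{1}{\theta+\mu-2}\|(-\Delta)^{1/4}u_n\|_2^2$), and the strict upper estimate $m(a)<\frac{2-\mu}{4}$ from Lemma \ref{contr}. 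The second Choquard term is handled analogously, using $(f_4)$ to dominate $F$ by $f$ on $\{|u_n|\ge t_0\}$ so that the same HLS-Hölder-Trudinger-Moser scheme applies.

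Finally, inserting the two vanishings into $P(u_n)=0$ forces $\|(-\Delta)^{1/4}u_n\|_2\to 0$, which contradicts the uniform positive lower bound $\inf_{\mathcal{P}(a)}\|(-\Delta)^{1/4}\cdot\|_2>0$ established in Lemma \ref{ous1}. Therefore the standing assumption is false, and there exist translations $\{y_n\}$ along which (after relabelling) $u_n\rightharpoonup u\neq 0$ in $H^{1/2}(\mathbb{R})$.

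The main technical hurdle is the Trudinger-Moser step: converting the energy-level bound $m(a)<\frac{2-\mu}{4}$ into a strictly subcritical bound on $\|(-\Delta)^{1/4}u_n\|_2^2$ that fits under the Adachi-Tanaka threshold. This is the characteristic difficulty of normalized-solution problems in the exponential-critical regime, and the sharp Adams-type test-function estimate of Lemma \ref{contr}, supplied by hypothesis $(f_6)$, is tailored precisely to make this step go through.
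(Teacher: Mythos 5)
Your overall strategy matches the paper's: assume vanishing, show both Choquard terms tend to zero, and contradict the lower bound on $\|(-\Delta)^{1/4}u_n\|_2$ from Lemma \ref{ous1}. However, there is a genuine gap in how you obtain the gradient-norm bound that activates the Adachi-Tanaka inequality.

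You propose to derive $\limsup_n\|(-\Delta)^{1/4}u_n\|_2^2<\frac{2-\mu}{2}$ a priori, by combining $P(u_n)=0$ and $(f_3)$ (which give $\int(I_\mu*F(u_n))F(u_n)\,\mathrm{d}x\leq\frac{1}{\theta+\mu-2}\|(-\Delta)^{1/4}u_n\|_2^2$) with the identity $\|(-\Delta)^{1/4}u_n\|_2^2=2J(u_n)+\int(I_\mu*F(u_n))F(u_n)\,\mathrm{d}x$ and the estimate $m(a)<\frac{2-\mu}{4}$. But absorbing the Choquard term produces an extra factor: you only obtain
\begin{equation*}
\limsup_{n\to\infty}\|(-\Delta)^{1/4}u_n\|_2^2\leq\frac{2(\theta+\mu-2)}{\theta+\mu-3}\,m(a),
\end{equation*}
and since $\theta+\mu-2>\theta+\mu-3>0$, the factor $\frac{\theta+\mu-2}{\theta+\mu-3}$ is strictly greater than $1$ and can be arbitrarily large (only $\theta>3-\mu$ is assumed). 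This bound therefore need not lie below $\frac{2-\mu}{2}$, so the H\"older-plus-Adachi-Tanaka argument for $\|F(u_n)\|_{2/(2-\mu)}\to0$ does not go through as written.

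The paper closes this by a bootstrap you have skipped. In the vanishing scenario it first proves $\int(I_\mu*F(u_n))F(u_n)\,\mathrm{d}x\to0$ \emph{without} any Moser-type inequality: split the domain at a large level $M_\delta$; on $\{|u_n|\geq M_\delta\}$ use $(f_4)$ to bound $F(u_n)$ by $M_0|f(u_n)|\leq\frac{M_0}{M_\delta}f(u_n)u_n$ and then apply the $K_0$-bound on $\int(I_\mu*F(u_n))f(u_n)u_n\,\mathrm{d}x$ coming from $P(u_n)=0$, $(f_3)$ and $J(u_n)\to m(a)$; on $\{|u_n|\leq M_\delta\}$ the nonlinearity is pointwise bounded, so HLS together with Lions vanishing in $L^p$, $p>2$, suffices. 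Only after this does the energy identity yield the sharp bound $\limsup\|(-\Delta)^{1/4}u_n\|_2^2\leq 2m(a)<\frac{2-\mu}{2}$ (no extra factor), and only then is Adachi-Tanaka invoked to kill $\|F(u_n)\|_{2/(2-\mu)}$, $\|f(u_n)u_n\|_{2/(2-\mu)}$ and hence the second Choquard term. In short, $(f_4)$ must be deployed on the \emph{first} Choquard quantity, before the Trudinger-Moser step, rather than on the second one as you propose.
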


\begin{proof}
We claim that
\begin{align*}
\Lambda:=\limsup_{n\to+\infty}\Big(\sup_{y\in\mathbb{R}}\int_{B_r(y)}|u_n|^2\mathrm{d}x\Big)>0.
\end{align*}
If this is false, we obtain $u_n\to0$ in $L^p(\mathbb{R})$ for any $p>2$ by the Lions' vanishing lemma \cite[Lemma 1.21]{willem}.
From $J(u_n)=m(a)+o_n(1)$, $P(u_n)=0$ and $(f_3)$, we have
\begin{align*}
J(u_n)-\frac{1}{2}P(u_n)\geq \frac{\theta+\mu-3}{2 \theta}\int_{\mathbb{R}}(I_\mu*F(u_n))f(u_n)u_n\mathrm{d}x.
\end{align*}
Since $\theta>3-\mu$, we get
\begin{align}\label{1n}
\limsup_{n\to\infty}\int_{\mathbb{R}}(I_\mu*F(u_n))f(u_n)u_n\mathrm{d}x\leq \frac{2\theta m(a)}{\theta+\mu-3}=:K_0.
\end{align}
For any $\delta\in (0,\frac{M_0K_0}{t_0})$, from $(f_3)$ and $(f_4)$, we can choose $M_\delta>\frac{M_0K_0}{\delta}>t_0$, then
\begin{align}\label{l}
\int_{|u_n|\geq M_\delta}(I_\mu*F(u_n))F(u_n)\mathrm{d}x\leq& M_0\int_{|u_n|\geq M_\delta}(I_\mu*F(u_n))|f(u_n)|\mathrm{d}x\nonumber\\
\leq& \frac{M_0}{M_\delta}\int_{|u_n|\geq M_\delta}(I_\mu*F(u_n))f(u_n)u_n\mathrm{d}x< \delta.
\end{align}
On the other hand, by \eqref{HLS} and $(\ref{Ft})$, we have
\begin{align}\label{ll}
\int_{|u_n|\leq M_\delta}(I_\mu*F(u_n))F(u_n)\mathrm{d}x\leq C(\|u_n\|_{\kappa+1}^{\kappa+1}+\|u_n\|_q^q)=o_n(1).
\end{align}
Combining $(\ref{l})$, $(\ref{ll})$ with the arbitrariness of $\delta$, we obtain
\begin{align*}
\int_{\mathbb{R}}(I_\mu*F(u_n))F(u_n)\mathrm{d}x=o_n(1).
\end{align*}
Thus, by Lemma \ref{contr}, we have
\begin{align*}
\limsup_{n\to\infty}\|(-\Delta)^{1/4}u_n\|_2^2\leq 2m(a)<\frac{2-\mu}{2}.
\end{align*}
Up to a subsequence, we assume that $\sup\limits_{n\in\mathbb{N}^+}\|(-\Delta)^{1/4}u_n\|_2^2<\frac{2-\mu}{2}.$ Fix $\alpha>\pi$ close to $\pi$ and $\nu>1$ close to $1$ such that
\begin{align*}
\sup_{n\in\mathbb{N}^+}\frac{2\alpha \nu \| (-\Delta)^{1/4}u_n\|_2^2}{2-\mu}< \pi.
\end{align*}
From $(\ref{tain1})$,  for $\nu'=\frac{\nu}{\nu-1}$, we have
\begin{align*}
\|F(u_n)\|_{\frac{2}{2-\mu}}\leq
C\|u_n\|_{\frac{2(\kappa+1)}{2-\mu}}^{\kappa+1}+C
\|u_n\|_{\frac{2q\nu'}{2-\mu}}^q\to0,\quad \mbox{as}\ n\to+\infty.
\end{align*}
By a similar argument as above, we infer that $ \|f(u_n)u_n\|_{\frac{2}{2-\mu}}\to0$ as $n\to\infty$.
Hence, we obtain
\begin{align*}
\int_{\mathbb{R}}(I_\mu*F(u_n))f(u_n)u_n\mathrm{d}x=o_n(1).
\end{align*}
Since $P(u_n)=0$, we have $\| (-\Delta)^{1/4}u_n\|_2^2=o_n(1)$, then $m(a)=0$, which is a contradiction. According to $\Lambda>0$, there exists $\{y_n\}\subset\mathbb{R}$ such that $\int_{B_1(y_n)}|u_n|^2\mathrm{d}x>\frac{\Lambda}{2}$, i.e., $\int_{B_1(0)}|u_n(x-y_n)|^2\mathrm{d}x>\frac{\Lambda}{2}$. Then up to a subsequence and up to translations in $\mathbb{R}$, $u_n\rightharpoonup u\neq0$ in $H^{1/2}(\mathbb{R})$.
\end{proof}

\begin{lemma}\label{ne}
Assume that $(f_1)$-$(f_3)$ and $(f_5)$-$(f_6)$ hold. Then $\{\lambda_n\}$ is bounded in $\mathbb{R}$ and $\lambda_n\to\lambda$ with some $\lambda<0$ as $n\to\infty$.
\end{lemma}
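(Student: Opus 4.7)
The plan is to combine equation (\ref{key}) tested against $u_n$ with the Poho\v{z}aev relation $P(u_n)=0$ in order to pin down both the magnitude and the sign of $\lambda_n$. For boundedness, I would pair (\ref{key}) with $u_n$ and use $\|u_n\|_2^2=a^2$ to obtain
\begin{align*}
\lambda_n a^2=\|(-\Delta)^{1/4}u_n\|_2^2-\int_{\mathbb{R}}(I_\mu*F(u_n))f(u_n)u_n\,\mathrm{d}x+o_n(1).
\end{align*}
Since $\{u_n\}\subset\mathcal{P}(a)$ with $J(u_n)=m(a)+o_n(1)$, the combination $J(u_n)-\tfrac{1}{2}P(u_n)$ together with $(f_3)$ and $\theta>3-\mu$ produces a uniform bound on the Choquard nonlinearity, as already recorded in (\ref{1n}). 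Using $P(u_n)=0$, this in turn controls $\|(-\Delta)^{1/4}u_n\|_2^2\leq \int(I_\mu*F(u_n))f(u_n)u_n\,\mathrm{d}x$, so $\{\lambda_n\}$ is bounded and, along a subsequence, $\lambda_n\to\lambda\in\mathbb{R}$.

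To isolate the sign of $\lambda$, I would eliminate the kinetic term via $P(u_n)=0$, substituting
\begin{align*}
\|(-\Delta)^{1/4}u_n\|_2^2=\int_{\mathbb{R}}(I_\mu*F(u_n))f(u_n)u_n\,\mathrm{d}x-(2-\mu)\int_{\mathbb{R}}(I_\mu*F(u_n))F(u_n)\,\mathrm{d}x
\end{align*}
into the previous identity. The nonlinear cross terms collapse, leaving
\begin{align*}
\lambda_n a^2=-(2-\mu)\int_{\mathbb{R}}(I_\mu*F(u_n))F(u_n)\,\mathrm{d}x+o_n(1).
\end{align*}
By Lemma \ref{non}, up to a subsequence and a translation, $u_n\rightharpoonup u\neq 0$ in $H^{1/2}(\mathbb{R})$ and hence $u_n\to u$ almost everywhere on $\mathbb{R}$; moreover, $(f_3)$ forces $F(t)>0$ for every $t\neq 0$. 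Applying Fatou's lemma on $\mathbb{R}\times\mathbb{R}$ to the nonnegative integrand $|x-y|^{-\mu}F(u_n(x))F(u_n(y))$ gives
\begin{align*}
\liminf_{n\to\infty}\int_{\mathbb{R}}(I_\mu*F(u_n))F(u_n)\,\mathrm{d}x\geq\int_{\mathbb{R}}(I_\mu*F(u))F(u)\,\mathrm{d}x>0,
\end{align*}
so $\lambda<0$, as required.

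The only conceptual obstacle is ensuring that the Choquard term stays uniformly bounded along the Palais--Smale sequence, but this has already been secured by (\ref{1n}) via the interplay between $(f_3)$ and the constraint $P(u_n)=0$; once this boundedness is in hand, the remainder of the argument is purely algebraic manipulation of the two basic identities, combined with the pointwise positivity of $F$ on the nontrivial weak limit $u$.
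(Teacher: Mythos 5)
Your proposal is correct and follows essentially the same route as the paper: test \eqref{key} against $u_n$, eliminate the kinetic term via $P(u_n)=0$ to arrive at $\lambda_n a^2=-(2-\mu)\int_{\mathbb{R}}(I_\mu*F(u_n))F(u_n)\,\mathrm{d}x+o_n(1)$, use $(f_3)$ together with \eqref{1n} for boundedness, and then invoke Lemma \ref{non} plus Fatou to force $\lambda<0$. The only cosmetic difference is that you spell out the pointwise positivity of $F$ and the Fatou argument on $\mathbb{R}\times\mathbb{R}$ a bit more explicitly, but the underlying mechanism is identical.
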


\begin{proof}
According to $(\ref{key})$, we have
\begin{align*}
\|(-\Delta)^{1/4}u_n\|_2^2=\lambda_n\int_{\mathbb{R}}|u_n|^2\mathrm{d}x+
\int_{\mathbb{R}}(I_\mu*F(u_n))f(u_n)u_n\mathrm{d}x+o_n(1).
\end{align*}
Combining with $P(u_n)=0$, we get
\begin{align*}
\lambda_na^2=-(2-\mu)
\int_{\mathbb{R}}(I_\mu*F(u_n))F(u_n)\mathrm{d}x+o_n(1).
\end{align*}
Thus
\begin{align}\label{nn}
\limsup_{n\to\infty}|\lambda_n|\leq \frac{2-\mu}{a^2}
\int_{\mathbb{R}}(I_\mu*F(u_n))F(u_n)\mathrm{d}x,
\end{align}
this together with $(f_3)$ and $(\ref{1n})$ yields that $\{\lambda_n\}$ is bounded in $\mathbb{R}$.
 Moreover, by Lemma \ref{non} and Fatou Lemma, we have
\begin{align*}
\limsup_{n\to\infty}\lambda_n=-\liminf_{n\to\infty}\frac{2-\mu}{a^2}
\int_{\mathbb{R}}(I_\mu*F(u_n))F(u_n)\mathrm{d}x\leq -\frac{2-\mu}{a^2}
\int_{\mathbb{R}}(I_\mu*F(u))F(u)\mathrm{d}x<0.
\end{align*}
Therefore, up to a subsequence, $\lambda_n\to\lambda$ with some $\lambda<0$ as $n\to\infty$.
\end{proof}

\section{{\bfseries Proof of the result}}\label{proof}

\noindent{\bfseries Proof of Theorem \ref{thm1.1}.} Under the assumptions of Theorem \ref{thm1.1}, from \eqref{key}, \eqref{1n}, Lemmas \ref{f}, \ref{non}, \ref{ne},
we know $u$ is a weak solution of problem $(\ref{problem})$ with $\lambda<0$ and $P(u)=0$.
Using the Br\'ezis-Lieb Lemma\cite[Lemma 1.32]{willem}, we get
\begin{align*}
\int_{\mathbb{R}}|u_n|^2\mathrm{d}x=\int_{\mathbb{R}}|u_n-u|^2\mathrm{d}x+\int_{\mathbb{R}}|u|^2\mathrm{d}x+o_n(1).
\end{align*}
Let $a_1:=\|u\|_2>0$ and $a_{n,2}:=\|u_n-u\|_2$, then $a^2=a_1^2+a_{n,2}^2+o_n(1)$.
On the one hand, using $(f_3)$, $P(u)=0$ and Fatou Lemma, we have
\begin{align*}
J(u)=&J(u)-\frac{1}{2}P(u)=\frac{1}{2}\int_{\mathbb{R}}\Big[(I_\mu*F(u))f(u)u-(3-\mu)(I_\mu*F(u))F(u)\Big]\mathrm{d}x\\
\leq& \liminf_{n\to\infty} \frac{1}{2}\int_{\mathbb{R}}\Big[(I_\mu*F(u))f(u)u-(3-\mu)(I_\mu*F(u))F(u)\Big]\mathrm{d}x\\
=&\liminf_{n\to\infty}(J(u_n)-\frac{1}{2}P(u_n))
=m(a).
\end{align*}
On the other hand, it follows from Lemma $\ref{6.1}$ that $J(u)\geq m(a_1)\geq m(a).$ Thus $J(u)= m(a_1)= m(a)$. 
By Lemma \ref{6.3}, we obtain $a=a_1$.
This implies $u$ is a ground state solution of $(\ref{problem})$.
\qed

\end{document}